\theoremstyle{plain}
\newtheorem*{theorem}{Theorem}
\newtheorem*{lemma}{Lemma}
\newtheorem*{proposition}{Proposition}
\newtheorem*{example}{Example}
\newtheorem*{definition}{Definition}
\newtheorem*{corollary}{Corollary}
\title{Gaudin Algebras, RSK and Calogero-Moser Cells in Type A}
\thanks{I.G. has been supported by EPSRC grants EP/R034826/1 and EP/G007632/1.}
\author{Adrien Brochier}
\address{Universit\'e de Paris, Sorbonne Universit\'e, CNRS, Institut de
Math\'ematiques de Jussieu-Paris Rive Gauche,
F-75013 Paris, France
}
\email{adrien.brochier@imj-prg.fr}
\author{Iain Gordon}
\address{School of Mathematics, University of Edinburgh, Edinburgh, U.K.}
\email{igordon@ed.ac.uk}
\author{Noah White}
\address{Mathematical Sciences Institute, Australian National University, Canberra, Australia}
\email{noah.white@anu.edu.au}
\begin{document}
\maketitle
\begin{abstract}
    We study the spectrum of a family of algebras, the inhomogeneous Gaudin algebras, acting on the $n$-fold tensor representation $\mathbb{C}[x_1, \ldots, x_r]^{\ot n}$ of the Lie algebra $\gl_r$. We use the work of Halacheva-Kamnitzer-Rybnikov-Weekes to demonstrate that the Robinson-Schensted-Knuth correspondence describes the behaviour of the spectrum as we move along special paths in the family. We apply the work of Mukhin-Tarasov-Varchenko, which proves that the rational Calogero-Moser phase space can be realised as a part of this spectrum, to relate this to behaviour at $t=0$ of rational Cherednik algebras of $\mathfrak{S}_n$. As a result, we confirm for symmetric groups a conjecture of Bonnaf\'{e}-Rouquier which proposes an equality between the Calogero-Moser cells they defined and the well-known Kazhdan-Lusztig cells. 
\end{abstract}
\section{Introduction}
\label{sec:introduction}
\subsection{} 
Let $\glr = {\rm Mat}_r( \mathbb{C})$ be the general Lie algebra. Any tensor product of locally finite dimensional $\glr$-representations, $M = V_1 \ox V_2 \ox \cdots \ox V_n$, carries an action of an inhomogeneous Gaudin algebra,  $\mathcal{A}(\underline{z}; \underline{q})$. This algebra depends on two sets of parameters $\underline{z} = (z_1, \ldots , z_n)\in \mathbb{C}^n$ and $\underline{q} = (q_1, \ldots , q_r) \in \mathbb{C}^r$. It is commutative. Works of several authors prove that for many $M$ and general values of the parameters $\underline{z}$ and $\underline{q}$ its action has a simple spectrum. We will denote its spectrum by $\mathcal{E}_{\underline{z}, \underline{q}}(M)$. 
\subsection{} We are interested in the behaviour of the sets $\mathcal{E}_{\underline{z}, \underline{q}}(M)$ as we vary the parameters $\underline{z}$ and $\underline{q}$. We shall investigate the case $M = \mathbb{C}[x_1, \ldots , x_r]^{\otimes n}$, where $\glr$ acts on $\mathbb{C}[x_1, \ldots , x_r]$ via differential operators $E_{ij}\mapsto x_i\partial_j$. For the rest of the introduction we fix $M$ as this representation, and denote $\mathcal{E}_{\underline{z}, \underline{q}}(M)$ by $\mathcal{E}_{\underline{z}, \underline{q}}$.
\subsection{} We shall consider tuples $\underline{z}\in \Rdel^n = \{ (z_1, \ldots , z_n) \in \mathbb{R}^n: z_1<z_2<\cdots <z_n\}$ and $\underline{q}\in\Rdel^r = \{ (q_1, \ldots , q_r) \in \mathbb{R}^r: q_1<q_2<\cdots <q_r\}$. We will prove, with appropriate definitions for limits, that:
\begin{itemize}
    \item as $\underline{z} \rightarrow \underline{\infty}$, the set $\mathcal{E}_{\underline{\infty}, \underline{q}}$ identifies with ${\rm Mat}_{r\times n}(\mathbb{N})$, the set of $r$-by-$n$ matrices whose entries are non-negative integers;
   
    \item as $\underline{z}\rightarrow \underline{0}$ and then $\underline{q} \rightarrow \underline{0}$, the set $\mathcal{E}_{\underline{0}, \underline{0}}$ identifies with $\bigsqcup_{\lambda \in \Part} \SSYT_n(\lambda) \times \SSYT_r(\lambda)$ where  $\SSYT_r(\lambda)$ is the set of semistandard Young tableaux of shape $\lambda$ with entries from $\{1, \ldots, r\}$ and similarly for $\SSYT_n(\lambda)$;
    \item tracking the sets $\mathcal{E}_{\underline{z}, \underline{q}}$ through the process of moving $(\underline{z}, \underline{q})$ in (an extension of) $\Rdel^n \times \Rdel^r$ from the first limit to the second limit induces the Robinson-Schensted-Knuth correspondence $$\Mat_{r \times n}(\mathbb{N}) \longrightarrow \bigsqcup_{\lambda \in \Part} \SSYT_n(\lambda) \times \SSYT_r(\lambda).$$  
\end{itemize}
\subsection{} Our proof of these results relies critically on \cite{HKRW} which endows the sets $\mathcal{E}_{\underline{z},\underline{q}}(M)$ with the structure of a $\glr$-crystal. That the RSK-correspondence appears is then a common theme in the theory of crystals.
\subsection{} Our interest in the above result stems from an application of a special case to confirm conjectures of Bonnaf\'{e}-Rouquier, \cite[Conjecture L and LR]{BonRouq}, in the theory of rational Cherednik algebras of $\mathfrak{S}_n$ which we now explain.
 To each complex reflection group $(W, \mathfrak{h})$ there is a family of rational Cherednik algebras associated, \cite{Etingof:2002bc}. These algebras depend on a pair of parameters, usually denoted by $t$ and $c$. When $t=0$, the rational Cherednik algebras have a large centre whose geometry controls much of their representation theory. The spectrum of this centre, a generalised Calogero-Moser space, depends on the parameter $c$ and is a ramified covering of the affine space $\mathfrak{h}/W \times \mathfrak{h}^*/W$. 
Bonnaf\'{e} and Rouquier, \cite{BonRouq},  have used the Galois theory of this covering to define partitions of the elements of $W$ into (left, right, two-sided) Calogero-Moser ``cells'', depending on $c$. When $(W, \mathfrak{h})$ is a Coxeter group it is conjectured that with appropriate choices these cells agree with the Kazhdan-Lusztig cells of $W$, important objects in Lie theoretic representation theory and algebraic combinatorics. Thus, conjecturally, Calogero-Moser cells generalise the theory of Kazhdan-Lusztig cells from Coxeter groups to all complex reflection groups. 
In \cite{BonRouq} the Bonnaf\'{e}-Rouquier conjecture is proved for rank 2 Coxeter groups. 
\subsection{} At $t=0$ the spectrum of the centre of rational Cherednik algebra of $\mathfrak{S}_n$ (for $c\neq 0$) is isomorphic to classical rational Calegero-Moser phase space $$\CM_n = \{ (Z, Y) \in {\rm Mat}_n (\mathbb{C}) \times {\rm Mat}_n (\mathbb{C}): [Z,Y] + \id = \text{rank 1 matrix} \}/ PGL_n(\mathbb{C}).$$ Sending the pair of matrices $(Z,Y)$ to their eigenvalues produces a ramified covering $$\Upsilon: \CM_n \rightarrow \mathbb{C}^n/\mathfrak{S}_n \times \mathbb{C}^n/\mathfrak{S}_n.$$
Any $n$-tuple of pairwise distinct complex numbers, $\underline{z} = (z_1, \ldots , z_n) \in \Creg^n$ and $n$-tuple  $\underline{p} = (p_1, \ldots , p_n)\in \mathbb{C}^n$ give rise to a point in $\CM_n$: $$ Z = \begin{pmatrix} z_1 & 0 & \cdots & 0 \\ 0 & z_2 & \cdots & 0 \\ \vdots & \vdots & & \vdots \\ 0 & 0 & \cdots & z_n  \end{pmatrix}, \, Y = \begin{pmatrix} p_1 & (z_1-z_2)^{-1} & \cdots & (z_1-z_n)^{-1} \\ (z_2-z_1)^{-1} & p_2 & \cdots & (z_2-z_n)^{-1} \\ \vdots & \vdots & & \vdots \\ (z_n-z_1)^{-1} & (z_n-z_2)^{-1} & \cdots & p_n  \end{pmatrix}.$$ 
In this description $\Upsilon (Z,Y) = ([\underline{ z}], [\underline{q}])$ where the unordered eigenvalues of the matrix $Y$ depend on $\underline{p}$ and $\uz$ and are denoted by $[\underline{q}]$.
\subsection{} In this language the Bonnaf\'{e}-Rouquier conjecture can be stated in terms of the behaviour of $\underline{z}= (z_1, \ldots , z_n) \in \Rdel^n$ and $\underline{q} = (q_1, \ldots , q_n) \in \Rdel^n$. For appropriate large values of $\underline{z}$, the elements in the fibre of $\Upsilon$ can be identified with the symmetric group, since the matrices $Y$ above $[\underline{q}]$ are close to diagonal matrices with distinct entries. The fibre above $[\underline{z}] = [\underline{0}]$ can be identified with the standard Young tableaux of $n$, \cite[Theorem 1.5]{Mukhin:2012ee}. Tracking the elements in the fibre as $\underline{z}$ moves from infinity to zero sees them coalesce to tableaux. The conjecture is that these collisions are determined by the Robinson-Schensted algorithm, and in particular that the element in the fibre corresponding to $w \in \mathfrak{S}_n$ tracks to the tableau given by the $P$-symbol of $w$. 
\subsection{} To confirm this we use the work \cite{Mukhin:2014ga} which identifies the fibres of $\Upsilon$ with a part of the spectrum $\mathcal{E}_{\underline{z}, \underline{q}}(V_{\varpi_1}^{\otimes n})$ of the inhomogeneous Gaudin algebras for $\mathfrak{gl}_n$. Since $V_{\varpi_1}$ is a summand of $\mathbb{C}[x_1, \ldots , x_n]$,  $V_{\varpi_1}^{\otimes n}$ is a summand of $\mathbb{C}[x_1, \ldots , x_n]^{\otimes n}$ and we are in the situation described above in the introduction, with $r=n$. 
As a result we can use the limiting behaviour of the spectrum $\mathcal{E}_{\underline{z}, \underline{q}}$ to interpret the behaviour of the fibres of $\Upsilon$ in terms of the Robinson-Schensted correspondence. This allows us to confirm Bonnaf\'{e}-Rouquier conjecture for $\mathfrak{S}_n$. 
\subsection{} This paper is organised as follows. In the Sections 2 and 3 we recall a variety of constructions which we will require: parallel transport of representations of algebras, the RSK correspondence, inhomogeneous Gaudin algebras. We state our main theorem in Section 4. Sections 5--7 then prove the theorem. In Section 8 we apply this to confirm the conjecture of Bonnaf\'{e}-Rouquier.  
\subsection{Acknowledgements} We thank Cedric Bonnaf\'e, Leonid Rybnikov and Sasha Veselov for useful conversations. This paper was written while the authors visited several institutions. In particular we would like to thank the Hausdorff Research Institute for Mathematics, ETH Z\"urich, FAU Erlangen-N\"urnberg.
\section{Setup}
\label{sec:setup}
In this section we introduce notation that we will use throughout the paper.
\subsection{Spectra of Commutative Algebras} Let \( V \) be a \( k \)-dimensional vector space. Assume that \( \mathcal{A} \) is a commutative algebra that acts on \( V \), meaning that we have a mapping \( \mathcal{A}\longrightarrow  \End(V) \). Since \( \mathcal{A} \) is commutative there will exist a set of distinct algebraic characters \( \chi_1, \ldots , \chi_{\ell}  \) such that for each \( i \) the generalised eigenspace \[ V_i = \left\{ v \in V \;\middle|\; (a-\chi_i(a))^k\cdot v = 0 \text{ for all } a \in \mathcal{A} \right\} \] is non-zero. It follows that \( V = V_1\oplus \cdots \oplus V_{\ell} \) is an \( \mathcal{A} \)-stable decomposition. We denote the set of all generalised eigenspaces for \( \mathcal{A} \) acting on \( V \) by \[ \mathcal{E}_{\mathcal{A}}(V) = \{ V_1, \ldots, V_{\ell}\}. \]
 We say that \( \mathcal{A} \) acts with a {simple spectrum} on \( V \) if for each \( i \) the space \( V_i \) above is one-dimensional. In particular this means that $\ell$, the number of characters, equals \( k \). 
\subsection{}
\label{continuation}
Suppose $U$ is an algebra and we have an algebra map \( U \longrightarrow \End(V) \). A family of commutative subalgebras of \( U \) parametrized by a space \( X \), denoted $\{ \mathcal{A}(x): x\in X\}$, gives us a family of subalgebras acting on \( V \). Typically the images of these subalgebras in \( \End(V) \) are not all of the same dimension nor do they all act semisimply. Nonetheless we will be interested in two different situations. 
\begin{enumerate}
    \item We will restrict \( X \) to some topological subspace \( Y \subseteq X \) such that for each \( y \in Y \), \( \mathcal{A}(y) \) acts with simple spectrum on \( V \). Then we get a covering space \[ \mathcal{E}(V )\longrightarrow Y \] whose fibres
are \( \mathcal{E}_{\mathcal{A}(y)}(V) \). In particular, if we have a path \( \gamma : [0, 1] \longrightarrow Y \) we get a parallel transport
map \( p : \mathcal{E}_{\mathcal{A}(\gamma(0))}(V ) \longrightarrow \mathcal{E}_{\mathcal{A}(\gamma(1))}(V ) \).
\item We will restrict \( X \) to some topological subspace \( Y \subseteq X \) such that for each \( y \in Y \), \( \mathcal{A}(y) \) acts with simple spectrum on \( V \) with the exception of a single distinguished point \( \tilde{y}\in Y \) where \( \mathcal{A}(\tilde{y}) \) acts semisimply, but without a simple spectrum.  Then we get a branched covering space \[ \mathcal{E}(V )\longrightarrow Y. \] In this case, if we have a path \( \gamma : [0, 1] \longrightarrow Y \) with \( \gamma(t) = \tilde{y} \) if and only if \( t=1 \), then we get a degenerated parallel transport
map \( p : \mathcal{E}_{\mathcal{A}(\gamma(0))}(V ) \longrightarrow \mathcal{E}_{\mathcal{A}(\gamma(1))}(V ) \). 
\end{enumerate}
\subsection{Tensor Embeddings}
Let \( U \) be an algebra and \( x \in U \). Let $n\in \mathbb{N}$ and $1\leq a \leq n$. We have an embedding
\[ \iota_a: U \longrightarrow U^{\ox n}, \quad  x \mapsto \id^{\ox a-1}\ox x \ox \id^{\ox n-a} \] which places \( x \) in the \( a^{\text{th}} \) tensorand. We use the notation \( x^{(a)} = \iota_a(x) \) for this. If \( \underline{a}=(a_1,a_2,\ldots,a_k) \) is a sequence of distinct integers between \( 1 \) and \( n \), we use \( \iota_{\underline{a}}:U^{\ox k} \longrightarrow U^{\ox n} \) to denote the map that embeds the \( i^{\text{th}} \) factor into the \( a_i^{\text{th}} \) factor, i.e. \( \iota_{\underline{a}}(x_1\ox x_2 \ox \cdots\ox x_k) = \prod_{i=1}^k x_i^{(a_i)} \). 
\subsection{} If \( U \) is a bialgebra with coproduct \( \Delta:U \longrightarrow U\ox U \), we use \( \Delta^{n} \) to denote the map \( U \longrightarrow U^{\ox n} \) defined inductively by \( \Delta^{n} = (\Delta^{n-1}\ox \id) \circ \Delta \) with \( \Delta^{1}=\id \). Note this means \( \Delta^2=\Delta \). For \( \underline{a} =(a_1,a_2,\ldots,a_k) \) as above, let \( \#\underline{a}=k \). If \( \underline{a}^1,\underline{a}^2,\ldots,\underline{a}^m \) are sequences of distinct integers between \( 1 \) and \( n \), such that the underlying sets are disjoint, we let \( \Delta^{\underline{a}^1,\underline{a}^2,\ldots,\underline{a}^m}: U^{\ox m} \longrightarrow U^{\ox n} \) be the map defined by \( \Delta^{\underline{a}^1,\underline{a}^2,\ldots,\underline{a}^m} = \prod_{i=1}^m \iota_{\underline{a}^i} \circ \Delta^{\# \underline{a}^i} \). If \( A \subseteq U^{\ox m} \) is a subalgebra we also use the notation \( A^{\underline{a}^1,\underline{a}^2,\ldots,\underline{a}^m} = \Delta^{\underline{a}^1,\underline{a}^2,\ldots,\underline{a}^m}(A) \). For example, if \( A \subseteq U^{\ox 2} \) we think of \( A^{(23)(1)} \subseteq U^{\ox 3} \) as the algebra obtained from \( A \) by spreading its first leg over the second and third tensor factors, and placing the second leg into the first tensor factor.
\subsection{The Robinson-Schensted-Knuth correspondence}
\label{sec:RSK} A thorough account can be found in \cite[Chapter 4]{fulton_Young}. Let \( \Mat_{r \times n}(\NN) \) be the set of \( r \times n \) matrices with nonnegative integer entries. Let \( k,t \in \mathbb{N}\). We set \( \Part(k) \) to be the set of partitions of \( k \) and the set of all partitions is denoted \( \Part = \bigsqcup_{k \ge 0} \Part(k) \), and we let \( \Part_{\le t} \) be the set of partitions with at most \( t \) parts.
\subsection{} We define a map
\[ \RSK : \Mat_{r \times n}(\NN) \longrightarrow \bigsqcup_{\lambda \in \Part} \SSYT_n(\lambda) \times \SSYT_r(\lambda). \]
via the following algorithm. For a matrix \( A = (a_{ij}) \in \Mat_{r \times n}(\NN) \) we form a sequence of pairs of integers \( (i_1,j_1),(i_2,j_2),\ldots,(i_k,j_k) \) where \( k \) is the sum of the entries of \( A \). The number of times the pair \( (i,j) \) appears in the sequence is \( a_{ij} \). The sequence is ordered in lexiographic order, giving preference to the first entry in each pair.  For example,
\[ A=
  \begin{pmatrix}
    0&2&1\\1&0&1
  \end{pmatrix} \mapsto (1,2), (1,2), (1,3), (2,1), (2,3).
\]
Now we {\it insert} the sequence \( j_1,j_2,\ldots,j_k \) into a tableau to form a semistandard tableau \( P(A) \) and a standard tableau \( Q'(A) \) (the recording tableau). To create the semistandard tableau \( Q(A) \) we  replace the number \( m \) in \( Q'(A) \) by \( i_m \). So in the above example
\[ P(A) = \young(1233,2), \,\, Q'(A) = \young(1235,4), \,\, Q(A)= \young(1112,2) . \]
By definition \( \RSK (A) = (P(A), Q(A)) \).
\subsection{} Transposing the matrix $A$ coincides with swapping the order of $P$ and $Q$. 
\begin{theorem}[{\cite[pp.40-41]{fulton_Young}}]
  \label{thm:RSK}
The map \( \RSK \) is a bijection. Moreover \( \RSK(A^t) = (Q(A),P(A)) \). 
\end{theorem}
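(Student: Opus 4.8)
The plan is to prove bijectivity by constructing an explicit inverse built from \emph{reverse} row-insertion, and then to deduce the transposition symmetry $\RSK(A^t)=(Q(A),P(A))$ from a planar (shadow-line) model of the algorithm in which transposing the matrix is visibly a reflection across the diagonal.

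First I would isolate the single algorithmic lemma on which everything rests: Schensted row-insertion $P\mapsto(P\leftarrow j)$ sends a semistandard tableau to a semistandard tableau with exactly one more box, and it is \emph{reversible} --- given the output together with the position of the box just created, successive reverse-bumping (each ejected entry slides up to the unique admissible cell of the previous row) recovers both $P$ and the letter $j$. One then checks that $\RSK(A)$ is well-formed: writing the biword of $A$ as $\binom{i_1\,\cdots\,i_k}{j_1\,\cdots\,j_k}$, lexicographic ordering forces $i_1\le\cdots\le i_k$ with $j_m$ weakly increasing on each block where $i_m$ is constant; since inserting a weakly increasing run of letters adds boxes of strictly increasing column index (the bumping routes move strictly right), those boxes form a horizontal strip swept out left-to-right, so the recording tableau $Q'(A)$ is a genuine standard Young tableau, and replacing its $m$-th smallest entry by $i_m$ collapses each such strip to a constant value. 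Hence $Q(A)\in\SSYT_r(\lambda)$ and $P(A)\in\SSYT_n(\lambda)$ with a common $\lambda\in\Part$.

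For bijectivity I would exhibit the inverse directly. Given $(P,Q)\in\SSYT_n(\lambda)\times\SSYT_r(\lambda)$, take the box of $\lambda$ carrying the largest entry of $Q$ and, among those, the one of largest column index; reverse-bump it out of $P$ to obtain a smaller tableau $P'$ together with an ejected letter, which becomes the bottom entry of the final biword pair, while the value just deleted from $Q$ becomes its top entry; delete that box from $Q$ and iterate. The resulting biword is automatically sorted lexicographically, hence encodes a unique $A\in\Mat_{r\times n}(\NN)$, and the reversibility lemma together with the block bookkeeping above shows that this map and $\RSK$ are mutually inverse.

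For the symmetry I would pass to Viennot's geometric construction: plot the biword of $A$ as a point configuration in $\NN^2$ with multiplicities $a_{ij}$, extract the first family of shadow lines, read the first row of $P(A)$ off their $x$-reentry coordinates and the first row of $Q(A)$ off their $y$-coordinates, then recurse on the resulting corner configuration. Granting that this reproduces iterated row-insertion, transposing $A$ reflects the whole configuration across the diagonal, interchanging the $x$- and $y$-axes and hence $P(A)$ and $Q(A)$ at every stage of the recursion, which yields $\RSK(A^t)=(Q(A),P(A))$. I expect the identification of this geometric model with the bumping algorithm to be the main obstacle; everything else is either the reversibility lemma or combinatorial bookkeeping.
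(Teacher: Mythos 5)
The paper offers no proof of this statement: it is quoted verbatim from Fulton's \emph{Young Tableaux} (pp.~40--41), which is exactly where your argument lives --- bijectivity via reverse row-insertion (peeling off the rightmost box among those carrying the largest entry of $Q$), and the transposition symmetry via the matrix-ball construction, i.e.\ the multiplicity-adapted version of the Viennot shadow-line picture you describe. Your sketch is correct in outline and takes essentially the same route as the cited source, so there is nothing to add beyond the bookkeeping you already flag (ordering of repeated points within a cell so that the configuration is diagonally symmetric, and the check that the geometric recursion reproduces bumping).
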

\subsection{Crystals} 
We let \( \SSYT_r(\lambda) \) be the set of semistandard tableaux of shape \( \lambda \in \Part \) with entries in \( \{1,2,\ldots,r\} \). Recall that the set \( \SSYT_r(\lambda) \) carries a natural \( \gl_r \)-crystal structure. 
The set \( \Mat_{r \times n}(\NN) \) is given the structure of a \( \gl_r \)-crystal as follows. Let \( \NN^r(k) \) be the set of vectors in \( \NN^r \) with the sum of entries equal to \( k \). We can identify \( \NN^r(k) \) with the natural basis of monomials in the irreducible $\gl_r$-representation \( \mathbb{C}[x_1,x_2, \ldots , x_r]_k \), 
giving \( \NN^r(k) \) the structure of a \( \gl_r \)-crystal.
If \( \underline{k}=(k_1,k_2,\ldots,k_n) \in \NN^n \), then let \( \Mat_{r \times n}(\NN, \underline{k}) \) be the set of matrices with column sums \( \underline{k} \).
This identifies \( \Mat_{r \times n}(\NN,\uk)\) with \(\prod_{i=1}^n \NN^r(k_i) \), and so gives \( \Mat_{r \times n}(\NN, \underline{k}) \) a \( \gl_r \)-crystal structure as a tensor product of crystals. 
Finally, \( \Mat_{r \times n}(\NN) = \bigsqcup_{\underline{k} \in \NN^n} \Mat_{r \times n}(\NN, \underline{k}) \) is the direct sum (i.e. disjoint union) of crystals.
\begin{proposition}[{\cite[Corollary~9.2]{BumpSchilling}}]
  \label{prp:RSK-crystal-iso}
The map \( \RSK \) is an isomorphism of \( \gl_r \)-crystals, where \( \SSYT_n(\lambda) \times \SSYT_r(\lambda) \) is taken to be \( \# \SSYT_n(\lambda) \) many copies of the crystal \( \SSYT_r(\lambda) \).
\end{proposition}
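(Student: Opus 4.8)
The plan is to reduce the statement to the classical fact that Schensted insertion of words is a morphism of $\gl_r$-crystals, transferring the $\gl_r$-action onto the inserted word by means of the transpose symmetry of $\RSK$. First, observe that the crystal operators $e_i,f_i$ only change the exponents of $x_i$ and $x_{i+1}$ inside a single column monomial, and hence preserve all column sums; so the decomposition $\Mat_{r\times n}(\NN) = \bigsqcup_{\underline k}\Mat_{r\times n}(\NN,\underline k)$ is one of $\gl_r$-crystals, and it suffices to treat each $\Mat_{r\times n}(\NN,\underline k) \cong \NN^r(k_1)\ox\cdots\ox\NN^r(k_n)$ separately; set $k := k_1+\cdots+k_n$. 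Writing column $j$ of $A=(a_{ij})$ as the weakly increasing word $u_j = 1^{a_{1j}}2^{a_{2j}}\cdots r^{a_{rj}}$ in $\{1,\ldots,r\}$ realises $\NN^r(k_j)$ as the connected component of $1^{\ox k_j}$ in $\NN^r(1)^{\ox k_j}$, so $\Mat_{r\times n}(\NN,\underline k)$ becomes a subcrystal of $\NN^r(1)^{\ox k}$, on which the $\gl_r$-crystal operators act by the tensor product rule and always modify a single $u_j$ internally, preserving the lengths $k_1,\ldots,k_n$.

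Next I would rewrite $\RSK(A)$ in terms of the word $u := u_1u_2\cdots u_n$. Applying Theorem \ref{thm:RSK} to $A^t\in\Mat_{n\times r}(\NN)$, the pair $\RSK(A^t)$ has first tableau $Q(A)\in\SSYT_r(\lambda)$, which is by definition the insertion tableau of the word read off from the biword of $A^t$; a direct inspection shows that this word is exactly $u_1u_2\cdots u_n$. Hence $Q(A) = P(u)$, the insertion tableau of the $\gl_r$-word $u$, while $P(A)$ is obtained from the standard recording tableau $Q'(u)$ by relabelling its $m$-th entry by the index $1\le j\le n$ of the block of sizes $(k_1,\ldots,k_n)$ containing $m$. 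Since the crystal operators on $u$ preserve this block structure and fix the lengths $k_j$, the proposition reduces to the compatibilities $P(f_iu)=f_iP(u)$ and $P(e_iu)=e_iP(u)$ (with the usual convention that one side vanishes iff the other does) together with $Q'(f_iu)=Q'(u)=Q'(e_iu)$, and to the observation that the content of $P(u)=Q(A)$ is the vector of row sums of $A$, which is the weight of $A$.

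These compatibilities are classical: insertion of words in $\{1,\ldots,r\}$ intertwines the $\gl_r$-crystal operators with the standard crystal structure on $\SSYT_r(\lambda)$, and the recording tableau is constant on connected components; this is precisely what underlies \cite[Corollary~9.2]{BumpSchilling}. (Alternatively, one checks that the crystal operators commute with the elementary Knuth transformations, hence descend to plactic classes, on each of which $u\mapsto P(u)$ induces the isomorphism with $\SSYT_r(\lambda)$; comparing the number of connected components isomorphic to $\SSYT_r(\lambda)$ with the number of standard Young tableaux of shape $\lambda$ then forces $Q'$ to be an invariant of components.) Granting this, $\RSK$ is a weight-preserving bijection by Theorem \ref{thm:RSK} which intertwines all $e_i$ and $f_i$; the induced $\varepsilon_i,\varphi_i$ then also match, and a bijective strict morphism of crystals is an isomorphism, which proves the claim. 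The main obstacle is the classical word-insertion statement; once it is in hand, the rest is bookkeeping organised around the transpose symmetry.
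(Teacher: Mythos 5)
Your argument is sound, but note that the paper does not prove this proposition at all: it is quoted verbatim from Bump--Schilling \cite[Corollary~9.2]{BumpSchilling} and used as a black box. What you have written is essentially an unpacking of that citation: you reduce the matrix statement to the word-level statement by (i) observing that the crystal operators preserve column sums, so one may work on each $\Mat_{r\times n}(\NN,\underline k)\cong \NN^r(k_1)\ox\cdots\ox\NN^r(k_n)$, realised inside $\NN^r(1)^{\ox k}$ via the weakly increasing column words $u_j$; (ii) using Theorem~\ref{thm:RSK} (the transpose symmetry) to identify $Q(A)$ with the Schensted insertion tableau $P(u_1u_2\cdots u_n)$ and $P(A)$ with the relabelled recording tableau; and (iii) invoking the classical facts $P(f_iu)=f_iP(u)$, $P(e_iu)=e_iP(u)$ and the constancy of the recording tableau on connected components. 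Steps (i) and (ii) are correct (your identification $Q(A)=P(u_1\cdots u_n)$ checks out, e.g.\ on the example matrix in Section~\ref{sec:RSK}), and step (iii) is exactly the content underlying the cited corollary, so your proof is circular only in the harmless sense that it trades one standard reference for a more elementary one. The one point you gloss over is the tensor-product convention: whether the reading word should be $u_1u_2\cdots u_n$ or its reverse, and in which order the tensor factors are bracketed, depends on the crystal convention in force (Bump--Schilling's versus Kashiwara's), and the compatibility $P(f_iu)=f_iP(u)$ holds for the matching choice; since the paper implicitly adopts the Bump--Schilling convention by citing them, this is a bookkeeping caveat rather than a gap, but it should be stated if the argument is written out in full. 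In short: correct, and a legitimate self-contained alternative to the paper's bare citation, at the cost of importing the word-insertion crystal compatibility instead.
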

\subsection{} The following is an important property of the RSK correspondence which we will exploit later. If \( 1 \le i \le r \)  and  \( S \in \SSYT_r(\lambda) \) we set \( S|_i \) to be the tableau obtained by removing all boxes containing numbers strictly larger than \( i \). 
\begin{proposition}
  \label{prp:rsk-rigidity}
Suppose \( f:\Mat_{r \times n}(\NN) \longrightarrow \bigsqcup_{\lambda \in \Part} \SSYT_n(\lambda) \times \SSYT_r(\lambda) \), $f(A) = (S(A),T(A))$, is an isomorphism of \( \gl_r \)-crystals. If, for every \( A \in \Mat_{r \times n}(\NN) \), $f$ has the property that \( S(A)|_{n-1} = P(A)|_{n-1} \), then \( f = \RSK \).
\end{proposition}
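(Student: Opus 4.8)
The plan is to leverage Proposition~\ref{prp:RSK-crystal-iso}, which says that $\RSK$ is itself an isomorphism of $\gl_r$-crystals. Granting this, $g := f\circ\RSK^{-1}$ is an automorphism of the $\gl_r$-crystal $\mathcal{B} := \bigsqcup_{\lambda\in\Part}\SSYT_n(\lambda)\times\SSYT_r(\lambda)$, and since $f = g\circ\RSK$ it will be enough to show that $g=\id$.

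The first step is to pin down the automorphisms of $\mathcal{B}$. As a $\gl_r$-crystal, $\mathcal{B}$ is the disjoint union, over all $\lambda\in\Part$ and all $T\in\SSYT_n(\lambda)$, of a copy $\{T\}\times\SSYT_r(\lambda)$ of the irreducible crystal $\SSYT_r(\lambda)$. Each $\SSYT_r(\lambda)$ is connected with a unique highest weight element, hence rigid (its only crystal automorphism is the identity), and $\SSYT_r(\lambda)\not\cong\SSYT_r(\mu)$ when $\lambda\neq\mu$. Consequently an automorphism $g$ of $\mathcal{B}$ permutes the connected components, restricting to isomorphisms between them, so it preserves the shape $\lambda$ and acts as the identity on each $\SSYT_r(\lambda)$-coordinate. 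Thus there is a shape-preserving bijection $\sigma$ of $\bigsqcup_{\lambda}\SSYT_n(\lambda)$ with $g(T,U) = (\sigma(T),U)$ for all $(T,U)\in\mathcal{B}$. Unwinding $f = g\circ\RSK$ gives, for every $A\in\Mat_{r\times n}(\NN)$, that $f(A) = (\sigma(P(A)),Q(A))$; i.e. $T(A)=Q(A)$ and $S(A)=\sigma(P(A))$.

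It then remains to deduce $\sigma=\id$ from the hypothesis $S(A)|_{n-1}=P(A)|_{n-1}$. Because $\RSK$ is a bijection (Theorem~\ref{thm:RSK}), $P(A)$ ranges over all semistandard tableaux with entries in $\{1,\ldots,n\}$ as $A$ ranges over $\Mat_{r\times n}(\NN)$, so the hypothesis is exactly the assertion that $\sigma(P)|_{n-1}=P|_{n-1}$ for every such $P$. Fixing $P$ of shape $\lambda$: the passage to $P|_{n-1}$ removes precisely the boxes containing $n$, and these occupy at most one cell per column (columns being strictly increasing), so $P|_{n-1}$ has a shape $\mu\subseteq\lambda$ with $\lambda/\mu$ a horizontal strip, and $P$ is recovered by filling $\lambda/\mu$ with $n$'s. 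Since $\sigma$ preserves shapes, $\sigma(P)$ also has shape $\lambda$, and $\sigma(P)|_{n-1}=P|_{n-1}$ has shape $\mu$; hence the cells of $\sigma(P)$ outside $\mu$ are again exactly $\lambda/\mu$ and are filled with $n$'s. So $\sigma(P)$ and $P$ agree on $\mu$ and on $\lambda/\mu$, whence $\sigma(P)=P$. Therefore $\sigma=\id$, so $S=P$, $T=Q$, and $f=\RSK$.

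The content of the argument is concentrated in the second paragraph: the rigidity and connectedness of the irreducible crystals $\SSYT_r(\lambda)$ reduce $f$ to $\RSK$ twisted by a shape-preserving relabelling $\sigma$ of the $\SSYT_n$-indexing, and then the combinatorial rigidity of the horizontal strip of $n$'s (its shape being determined by $P|_{n-1}$) forces $\sigma$ to be trivial. I do not anticipate a genuine obstacle; the only thing to be careful about is the bookkeeping of which tensor factor carries the crystal structure.
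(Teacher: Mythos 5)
Your proof is correct and takes essentially the same route as the paper's: rigidity of the irreducible crystals \( \SSYT_r(\lambda) \) forces \( T(A)=Q(A) \) and equality of shapes \( \sh(S(A))=\sh(P(A)) \), after which the hypothesis \( S(A)|_{n-1}=P(A)|_{n-1} \) determines the boxes containing \( n \) (a horizontal strip filling \( \lambda\setminus\mu \)) and yields \( S(A)=P(A) \). The only difference is presentational: the paper leaves the rigidity step implicit in the assertion that both crystal isomorphisms give \( Q(A)=T(A) \), whereas you make it explicit by classifying the automorphisms of the target crystal via \( f\circ\RSK^{-1} \).
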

\begin{proof}
  The fact that \( f \) and \( \RSK \) are both crystal isomorphisms means that \( Q(A)=T(A) \).
  For \( T \in \SSYT_n(\lambda) \), define \( \sh(T)=\lambda \) and observe therefore that
  \[ \sh(S(A))=\sh(T(A))=\sh(Q(A))=\sh(P(A)). \]
  Thus we know that \( S(A) \) and \( P(A) \) are semistandard tableaux of the same shape. By hypothesis they are identical semistandard tableaux once all boxes containing \( n \) have been removed. This means \( S(A)=P(A) \).
\end{proof}
\section{Inhomogeneous Gaudin Algebras}
\label{sec:bethe-algebras}
We will introduce the algebras whose spectrum we will be interested in studying and some of their limits.
\subsection{Definitions} Let \( \hat{\gl_r}_- = t^{-1}\gl_r[t^{-1}] \). For any non-zero complex number \( w \), there is an evaluation map \( \hat{\gl_r}_- \rightarrow \gl_r \) which sends \( g\otimes t^{-s} \) to \( gw^{-s} \). It is a Lie algebra homomorphism and induces a morphism of algebras \( \phi_w:U(\hat{\gl_r}_-)\longrightarrow U(\gl_r) \). For any \( \underline{z} = (z_1, \ldots , z_n) \in \CC^n \) and \( w\in \CC\setminus \{ z_1, \ldots, z_n \} \), there is a Lie algebra homomorphism:
\[ \hat{\gl_r}_- \longrightarrow \gl_r^{\oplus n}, \qquad g\otimes t^{-s} \mapsto (g(w-z_1)^{-s}, g(w-z_2)^{-s}, \ldots , g(w-z_n)^{-s}). \] This induces an algebra morphism \[ \phi_{w}(\underline{z})=\phi_{w-z_1} \ox \phi_{w-z_2}\ox \cdots \ox \phi_{w-z_n} \circ \Delta^{n}: U(\hat{\gl_r}_-) \longrightarrow U(\gl_r^{\oplus n}) = U(\gl_r)^{\otimes n}. \]
There is also an evaluation at \( \infty \) mapping, \( \hat{\gl_r}_- \rightarrow (\gl_r)_{\sf ab} \), given by extracting the coefficient of \( t^{-1} \). This Lie algebra homomorphism gives rise to an algebra morphism \[ \phi_{\infty}: U(\hat{\gl_r}_-) \longrightarrow U((\gl_r)_{\sf ab}) = S(\gl_r). \]
\subsection{} Now fix \( \underline{q} = (q_1,\ldots, q_r) \in \CC^r \) which we identify with an element of \( \diag(\gl_r) \), the Cartan subalgebra of \( \gl_r \). Using the trace form we can identify \( \underline{q} \) with a functional on \( \gl_r \) which extends to a character of the symmetric algebra \( S(\gl_r) \), denoted by \( \chi_{\underline{q}} \). We then define the mapping 
\[ \phi_{w}(\underline{z};\underline{q}) = (\id^{\otimes n}\otimes \chi_{\underline{q}}) \circ (\phi_{w}(\uz)\otimes \phi_{\infty})\circ \Delta : U(\hat{\gl_r}_-) \longrightarrow U(\gl_r)^{\otimes n}. \]  
Alternatively,
\[ \phi_w(\uz;\uq) = \phi_{w-z_1}\ox\phi_{w-z_2}\ox\cdots\ox\phi_{w-z_n}\ox(\chi_{\uq} \circ \phi_{\infty}) \circ \Delta^{n+1}. \]
\subsection{}
\label{sec:equal-params-gaudin}
We now describe how this map behaves when some of the parameters \( \uz\in \mathbb{C}^n \) coincide. Let \( \underline{u}=(u_1,u_2,\ldots,u_k) \) be a complete and irredundant list of the complex numbers appearing in \( \uz \) and let \( A_i =  \{\; j \;|\; z_j=u_i \;\} \), the set of indices that record where \( u_i \) appears in \( \uz \). Fix an ordering of the elements of \( A_i \) and let \( \underline{a}^i \) the sequence obtained in this way. For example, if \( \uz = (\alpha,\beta,\alpha,\gamma,\gamma) \) for three distinct \( \alpha,\beta,\gamma \in \CC \), then one possibility is \( u_1=\alpha \), \( u_2=\beta \) and \( u_3 = \gamma \), with \( \underline{a}^1 = (3,1) \), \( \underline{a}^2 = (2) \) and \( \underline{a}^3 = (4,5) \).
\begin{lemma}
  \label{lem:equal-params-gaudin-algebra}
With the above notation, \( \phi_w(\uz;\uq) = \Delta^{\underline{a}^1,\underline{a}^2,\ldots,\underline{a}^k} \circ \phi_w(\underline{u};\uq) \).
\end{lemma}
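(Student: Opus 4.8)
The plan is to reduce to a calculation on algebra generators. Both \( \phi_w(\uz;\uq) \) and \( \Delta^{\underline{a}^1,\ldots,\underline{a}^k}\circ\phi_w(\underline{u};\uq) \) are morphisms of associative algebras \( U(\hat{\gl_r}_-)\to U(\gl_r)^{\otimes n} \), both defined for the same \( w \) since \( \{z_1,\ldots,z_n\}=\{u_1,\ldots,u_k\} \), and \( U(\hat{\gl_r}_-) \) is generated by \( \hat{\gl_r}_- \). Hence it is enough to verify that the two maps agree on a generator \( g\otimes t^{-s} \), with \( g\in\gl_r \) and \( s\ge 1 \). The only structural fact I will use is primitivity: every element of the Lie algebra \( \hat{\gl_r}_- \) is primitive in \( U(\hat{\gl_r}_-) \) and every element of \( \gl_r \) is primitive in \( U(\gl_r) \), so \( \Delta^m(x)=\sum_{j=1}^m x^{(j)} \) for any such primitive \( x \).

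First I would evaluate the left-hand side. Using the alternative formula \( \phi_w(\uz;\uq)=\phi_{w-z_1}\otimes\cdots\otimes\phi_{w-z_n}\otimes(\chi_{\uq}\circ\phi_\infty)\circ\Delta^{n+1} \), the identity \( \Delta^{n+1}(g\otimes t^{-s})=\sum_{a=1}^{n+1}(g\otimes t^{-s})^{(a)} \), the evaluations \( \phi_{w-z_a}(g\otimes t^{-s})=g(w-z_a)^{-s} \), and the fact that \( \phi_\infty \) extracts the coefficient of \( t^{-1} \), one gets
\[ \phi_w(\uz;\uq)(g\otimes t^{-s})=\sum_{a=1}^n\big(g(w-z_a)^{-s}\big)^{(a)}+\delta_{s,1}\,\chi_{\uq}(g)\cdot 1. \]
Running the same computation with \( \underline{u} \) in place of \( \uz \) yields \( \phi_w(\underline{u};\uq)(g\otimes t^{-s})=\sum_{i=1}^k\big(g(w-u_i)^{-s}\big)^{(i)}+\delta_{s,1}\,\chi_{\uq}(g)\cdot 1 \) in \( U(\gl_r)^{\otimes k} \).

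It remains to apply \( \Delta^{\underline{a}^1,\ldots,\underline{a}^k} \) to the last expression. Since it is an algebra homomorphism it fixes the scalar term, and it carries the simple tensor having \( g(w-u_i)^{-s} \) in the \( i \)-th leg (and \( 1 \) in every other leg) to \( \iota_{\underline{a}^i}\big(\Delta^{\#\underline{a}^i}(g(w-u_i)^{-s})\big) \). As \( g(w-u_i)^{-s}\in\gl_r \) is primitive, \( \Delta^{\#\underline{a}^i} \) expands it as a sum over the legs \( 1,\ldots,\#\underline{a}^i \), and \( \iota_{\underline{a}^i} \) moves the \( j \)-th of these to the leg indexed by the \( j \)-th entry of \( \underline{a}^i \); thus the \( i \)-th summand becomes \( \sum_{a\in A_i}\big(g(w-u_i)^{-s}\big)^{(a)} \) (in particular independent of the chosen ordering of \( A_i \)). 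Since \( u_i=z_a \) whenever \( a\in A_i \), and \( \{A_1,\ldots,A_k\} \) is a partition of \( \{1,\ldots,n\} \), the resulting double sum reindexes to \( \sum_{a=1}^n\big(g(w-z_a)^{-s}\big)^{(a)} \), which together with the scalar term is exactly the left-hand side. The only point needing care is this final reindexing — matching each tensor leg of \( U(\gl_r)^{\otimes k} \), after being spread by \( \iota_{\underline{a}^i}\circ\Delta^{\#\underline{a}^i} \), with the corresponding legs of \( U(\gl_r)^{\otimes n} \) — but it is purely combinatorial; the real content is just that iterated coproducts are compatible with the evaluation maps \( \phi_{w-z_a} \).
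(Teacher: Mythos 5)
Your proposal is correct and follows essentially the same route as the paper's proof: check the identity on Lie algebra generators $g\otimes t^{-s}$, compute both sides explicitly, and reindex the double sum using that $A_1,\ldots,A_k$ partitions $\{1,\ldots,n\}$ with $z_a=u_i$ for $a\in A_i$. The only difference is that you spell out the primitivity/coproduct bookkeeping that the paper leaves implicit, which is fine.
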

\begin{proof}
  We check the identity on generators \( g\ox t^s \in U(\hat{\gl}_{r-}) \). First
  \[ \phi_w(\uz;\uq)(g\ox t^s) = \sum_{a=1}^n g^{(a)}(w-z_a)^s + \delta_{s,-1}\chi_{\uq}(g)1^{\otimes n}. \]
  On the other hand,
  \begin{align*}
    \Delta^{\underline{a}^1,\underline{a}^2,\ldots,\underline{a}^k} \circ \phi_w(\underline{u};\uq)(g\ox t^s)
    &= \Delta^{\underline{a}^1,\underline{a}^2,\ldots,\underline{a}^k} \left( \sum_{b=1}^k g^{(b)}(w-u_b)^s + \delta_{m,-1}\chi_{\uq}(g)1^{\otimes k} \right) \\
    &= \sum_{b=1}^k\sum_{a \in A_b} g^{(a)}(w-u_b)^s + \delta_{s,-1}\chi_{\uq}(g)1^{\otimes n}.
  \end{align*}
  Since \( A_1,A_2,\ldots,A_k \) partitions \( \{1,2,\ldots,n\} \) and since \( u_b \) appears in \( \uz \) precisely \( \# A_b \) times, the claim follows.
\end{proof}

\subsection{} \label{defn:iga} There is a commutative subalgebra \( \mathcal{A}\subseteq U(\hat{\gl_r}_-) \), the \emph{universal Gaudin algebra}, that is free commutative on an infinite set of generators. The generators are described in \cite[Corollary 2]{ryb_shift}, and arise from taking repeated derivatives of a set of homogeneous generators for a copy of \( S(\gl_r)^{\gl_r} \) in \( U(\hat{\gl_r}_-) \). We define the {inhomogeneous Gaudin algebra}\adri{There is a bit of inconsistency in the names and naotations.}\noah{I'll change everything to Gaudin, but happy to change it to Bethe if we think that's best} \iain{I like Gaudin: it almost sounds like my name :)} \[ \mathcal{A}(\underline{z}; \underline{q}) := \phi_w(\underline{z}; \underline{q})(\mathcal{A}). \] This algebra is independent of the choice of \( w\in \mathbb{C} \). 

\begin{theorem}[{\cite[Lemma 9.3]{HKRW}}]
  \label{thm:bethe-algebra-commutative}
The inhomogeneous Gaudin algebra \( \mathcal{A}(\underline{z}; \underline{q}) \) is commutative.  Let \( \zz_{\glr}(\underline{q}) \subseteq \glr \) be the centraliser of \(\underline{q} \in \glr \). Then \( \mathcal{A}(\underline{z}; \underline{q}) \) commutes with \( \Delta^{(n-1)}(\zz_{\glr}(\underline{q})) \subseteq U(\glr)^{\otimes n} \). 
\end{theorem}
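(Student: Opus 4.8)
The plan is to dispose of commutativity by a formal argument and then to isolate the role of the centraliser via a $\gl_r$-equivariance computation, both ultimately reduced to checks on the Lie algebra generators $g\otimes t^{-s}$ exactly as in the proof of Lemma~\ref{lem:equal-params-gaudin-algebra}. For commutativity, I would observe that $\phi_w(\uz;\uq)$ is an algebra homomorphism: it is the composite of the coproduct $\Delta$ of the Hopf algebra $U(\hat{\gl_r}_-)$, the map $\phi_w(\uz)\otimes\phi_\infty$ (a tensor product of maps each induced by a Lie algebra homomorphism), and $\id^{\otimes n}\otimes\chi_{\uq}$, where $\chi_{\uq}\colon S(\gl_r)\to\CC$ is a character. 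Since the universal Gaudin algebra $\mathcal{A}$ is commutative, its image $\mathcal{A}(\uz;\uq)=\phi_w(\uz;\uq)(\mathcal{A})$ is then automatically commutative, and nothing beyond the freeness of $\mathcal{A}$ stated above is needed.

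For the centraliser statement, the idea is to transport a $\gl_r$-action by derivations through the construction. Equip $U(\hat{\gl_r}_-)$, $U(\gl_r)^{\otimes n}\otimes S(\gl_r)$ and $U(\gl_r)^{\otimes n}$ with the derivation actions of $\gl_r$ extending the adjoint action, noting that on $U(\gl_r)^{\otimes n}$ this action is inner, namely $\mathrm{ad}\big(\Delta^{(n-1)}(X)\big)$ with $\Delta^{(n-1)}(X)=X^{(1)}+\cdots+X^{(n)}$. I would then check that $\tilde\phi:=(\phi_w(\uz)\otimes\phi_\infty)\circ\Delta$ is $\gl_r$-equivariant: the standard coproduct of an enveloping algebra intertwines a Lie algebra derivation with the corresponding derivation on the tensor square, and $\phi_w(\uz)$, $\phi_\infty$ are induced by the $\gl_r$-equivariant Lie algebra maps $g\otimes t^{-s}\mapsto (g(w-z_1)^{-s},\ldots,g(w-z_n)^{-s})$ and $g\otimes t^{-s}\mapsto\delta_{s,1}\,g$ respectively.

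Next I would identify the constraint that produces the centraliser. The final map $\id^{\otimes n}\otimes\chi_{\uq}$ is $\gl_r$-equivariant, with $\CC$ carrying the trivial action, precisely on the subspace of $X\in\gl_r$ for which $\chi_{\uq}$ is $X$-invariant; and using the trace form, $\chi_{\uq}\big(X\cdot(Y_1\cdots Y_k)\big)=\sum_{i}\mathrm{tr}\big([\uq,X]Y_i\big)\prod_{j\neq i}\mathrm{tr}(\uq Y_j)$, which vanishes for all tuples $Y_\bullet$ exactly when $[X,\uq]=0$, i.e. $X\in\zz_{\glr}(\uq)$. Combining this with the equivariance of $\tilde\phi$ gives, for $X\in\zz_{\glr}(\uq)$ and any $a\in U(\hat{\gl_r}_-)$, the identity $\big[\Delta^{(n-1)}(X),\,\phi_w(\uz;\uq)(a)\big]=\phi_w(\uz;\uq)(X\cdot a)$. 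Finally, I would invoke that $\mathcal{A}$ consists of $\gl_r$-invariant elements, since its generators are obtained by applying the translation operator $\partial_t$, which commutes with $\mathrm{ad}\,\gl_r$, to the image of $S(\gl_r)^{\gl_r}$ inside $U(\hat{\gl_r}_-)$ described in \cite[Corollary~2]{ryb_shift}; hence $X\cdot a=0$ for $a\in\mathcal{A}$, the right-hand side above vanishes, and $\Delta^{(n-1)}(X)$ centralises $\mathcal{A}(\uz;\uq)$. As $\Delta^{(n-1)}$ is linear on $\gl_r$, this proves the claim for all of $\Delta^{(n-1)}(\zz_{\glr}(\uq))$.

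I expect no deep obstacle here — it is a bookkeeping exercise in keeping track of the various $\gl_r$-actions, the one subtle point being that on $S(\gl_r)$ the relevant action is the derivation extending the adjoint action of $\gl_r$ and not the (zero) commutator action. The single input not literally contained in the excerpt is the pointwise $\gl_r$-invariance of $\mathcal{A}$; this is standard and immediate from the description of the generators, but must be cited. As an alternative to the equivariance computation, one may recognise $\phi_w(\uz;\uq)$ as the $(n+1)$st-point inhomogeneous Gaudin map with the last point sent to $\infty$, followed by evaluation at $\chi_{\uq}$, and invoke the known diagonal $\gl_r$-invariance of the $(n+1)$-point Gaudin algebra; the direct check above is, however, shorter and self-contained.
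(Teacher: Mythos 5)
Your argument is correct. Note, though, that the paper does not prove this statement at all: it is imported wholesale as \cite[Lemma 9.3]{HKRW}, so any comparison is with that external source rather than with an in-paper argument. What you supply is the standard direct proof, and it goes through: commutativity is indeed immediate because $\phi_w(\uz;\uq)$ is a composite of algebra maps (coproduct, maps induced by Lie algebra homomorphisms, and the character $\chi_{\uq}$ on the commutative factor $S(\glr)$), so the image of the commutative algebra $\mathcal{A}$ is commutative; and your equivariance bookkeeping for the centraliser part is sound --- $\phi_w(\uz)$, $\phi_\infty$ and $\Delta$ intertwine the derivation actions extending the adjoint action, the derivation action on $U(\glr)^{\otimes n}$ is inner via $\operatorname{ad}\bigl(\Delta^{(n-1)}(X)\bigr)$, and $\chi_{\uq}$ is invariant under $X$ exactly when $[X,\uq]=0$ (your trace-form computation is right, and only the ``if'' direction is needed). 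The one genuine external input, as you correctly flag, is that $\mathcal{A}$ lies in the $\glr$-invariants of $U(\hat{\glr}_-)$; this does not follow from anything stated in the paper and must be cited (it does follow from the description of the generators in \cite[Corollary 2]{ryb_shift}, since the derivation in $t$ commutes with the adjoint $\glr$-action and the seed elements come from $S(\glr)^{\glr}$). With that citation in place your proof is complete and self-contained, which is arguably a small gain over the paper's bare reference to HKRW; in substance it is the same mechanism one finds there, so the difference is one of exposition rather than of method.
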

\subsection{} There is one case that will be of particular interest to us and we will use a special notation. This is when \( n=1 \), so there is one variable \( \underline{z} = (z_1) \). In this case, the algebra \( \mathcal{A}(\underline{z}; \underline{q}) \subseteq U(\glr) \) does not depend on \( z_1 \) and we shall denote the algebra by \( \mathcal{A}(\underline{q}) \).
When $\uq$ is regular this algebra contains the abelian Lie subalgebra \( \{ G_{\underline{h}}: \underline{h} = (h_1, \ldots , h_r)\in \diag(\gl_r) \subseteq \gl_r\} \) where \[ G_{\underline{h}} := \sum_{i<j} \frac{h_i-h_j}{q_i-q_j}
   E_{ij}E_{ji}\in U(\glr), \] see \cite[Proposition 9.5]{HKRW}. It also contains the Cartan subalgebra \( \diag (\glr) \) of \( \glr \), \cite[loc.cit.]{HKRW}.
 \subsection{}
Using the notation from Section~\ref{sec:equal-params-gaudin}, we can describe the algebras \( \uBethe(\uz;\uq) \) when some of the parameters in \( \uz \) coincide.
\begin{proposition}
  \label{prp:equal-params-gaudin-algebra}
For \( \uz \in \CC^n \), \( \uBethe(\uz;\uq) = \uBethe(\underline{u};\uq)^{\underline{a}^1,\underline{a}^2,\ldots,\underline{a}^k} \). In particular, if \( \uz=(z,z,\ldots,z) \) then \( \uBethe(\uz;\uq) = \uBethe({\uq})^{(1,2,\ldots,n)} = \Delta^{n}(\uBethe({\uq})) \) and if \( \uz = (0,0,\ldots,0,z) \) then \( \uBethe(\uz;\uq) = \uBethe(0,z;\uq)^{(1,2,\ldots,n-1)(n)} \).
\end{proposition}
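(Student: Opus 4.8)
The plan is to derive this straight from Lemma~\ref{lem:equal-params-gaudin-algebra}, so the argument is little more than unwinding notation. Fix any $w\in\CC$ lying outside $\{z_1,\ldots,z_n\}=\{u_1,\ldots,u_k\}$. Lemma~\ref{lem:equal-params-gaudin-algebra} gives the equality of algebra maps $\phi_w(\uz;\uq)=\Delta^{\underline{a}^1,\underline{a}^2,\ldots,\underline{a}^k}\circ\phi_w(\underline{u};\uq)\colon U(\hat{\gl_r}_-)\to U(\gl_r)^{\otimes n}$. I would then evaluate both sides on the universal Gaudin algebra $\mathcal{A}\subseteq U(\hat{\gl_r}_-)$ and use the definitions $\mathcal{A}(\uz;\uq)=\phi_w(\uz;\uq)(\mathcal{A})$ and $\mathcal{A}(\underline{u};\uq)=\phi_w(\underline{u};\uq)(\mathcal{A})$, together with the convention $B^{\underline{a}^1,\ldots,\underline{a}^k}=\Delta^{\underline{a}^1,\ldots,\underline{a}^k}(B)$, to obtain
\[ \mathcal{A}(\uz;\uq)=\Delta^{\underline{a}^1,\underline{a}^2,\ldots,\underline{a}^k}\bigl(\phi_w(\underline{u};\uq)(\mathcal{A})\bigr)=\mathcal{A}(\underline{u};\uq)^{\underline{a}^1,\underline{a}^2,\ldots,\underline{a}^k}, \]
which is the first assertion. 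As a byproduct, the right-hand side is independent of the orderings chosen for the $A_i$ in Section~\ref{sec:equal-params-gaudin}, since the left-hand side is; this could alternatively be checked directly from cocommutativity of the coproduct on $U(\gl_r)$.

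For the two ``in particular'' statements I would simply specialize. If $\uz=(z,z,\ldots,z)$ then $\underline{u}=(z)$, $k=1$ and $\underline{a}^1=(1,2,\ldots,n)$; since $\iota_{(1,2,\ldots,n)}$ is the identity map of $U(\gl_r)^{\otimes n}$ we have $\Delta^{(1,2,\ldots,n)}=\Delta^n$, and since the $n=1$ Gaudin algebra does not depend on its single parameter, $\mathcal{A}(\underline{u};\uq)=\mathcal{A}(\uq)$. Hence $\mathcal{A}(\uz;\uq)=\mathcal{A}(\uq)^{(1,2,\ldots,n)}=\Delta^n(\mathcal{A}(\uq))$. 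If $\uz=(0,0,\ldots,0,z)$ with $z\neq 0$ then $\underline{u}=(0,z)$, $k=2$, $A_1=\{1,\ldots,n-1\}$ and $A_2=\{n\}$, so with the evident orderings $\underline{a}^1=(1,2,\ldots,n-1)$ and $\underline{a}^2=(n)$, and the general formula specializes to $\mathcal{A}(\uz;\uq)=\mathcal{A}(0,z;\uq)^{(1,2,\ldots,n-1)(n)}$; the boundary case $z=0$ (so $\uz=\underline{0}$) is subsumed by the previous case.

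I do not expect a genuine obstacle here: once Lemma~\ref{lem:equal-params-gaudin-algebra} is available the proposition is purely formal. The only points deserving a sentence are the identification $\Delta^{(1,2,\ldots,n)}=\Delta^n$ (read off from the definition of $\iota_{\underline{a}}$) and the remark that the algebra $\mathcal{A}(\underline{u};\uq)^{\underline{a}^1,\ldots,\underline{a}^k}$ does not see the auxiliary ordering choices.
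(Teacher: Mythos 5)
Your argument is correct and is exactly the paper's route: the paper's proof simply says the proposition follows immediately from the definition of \( \mathcal{A}(\uz;\uq) \) as \( \phi_w(\uz;\uq)(\mathcal{A}) \) together with Lemma~\ref{lem:equal-params-gaudin-algebra}, which is what you spell out. Your extra remarks (the identification \( \Delta^{(1,2,\ldots,n)}=\Delta^n \), independence of the ordering of the \( A_i \), and the two specializations) are just the unwinding the paper leaves implicit.
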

\begin{proof}
  This follows immediately from the definition and Lemma~\ref{lem:equal-params-gaudin-algebra}.
\end{proof}
 
\label{sec:limits-gaudin-algebras}
\subsection{Limits of inhomogeneous Gaudin algebras}
\label{ztoinfty}
If \( \underline{q}\in \text{diag}(\glr) \) is reqular, then by \cite[Theorem 2]{ryb_shift}, we have \[ \lim_{t\rightarrow \infty} \mathcal{A}(t\underline{z}; \underline{q}) = \mathcal{A}(\underline{q})^{\otimes n} \subseteq U(\glr)^{\otimes n}. \]
\subsection{} In fact, we can strengthen the above result slightly to include small deformations of the path \( t\uz \). This will be useful when we consider homotopies of paths.
\begin{proposition}
  \label{prp:ztoinfty}
Let \( \uz(t): [1,\infty) \rightarrow \Creg^n \) be a path such that \( \lim_{t\to \infty} z_{i+1}(t)-z_i(t) = \infty \). Then 
\[ \lim_{t\rightarrow \infty} \mathcal{A}(t\underline{z}; \underline{q}) = \mathcal{A}(\underline{q})^{\otimes n} \subseteq U(\glr)^{\otimes n}. \]
\end{proposition}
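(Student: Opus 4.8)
The plan is to compute the limit directly by a localization (``zooming-in'') argument, exploiting that $\mathcal{A}(\underline{z};\underline{q})=\phi_w(\underline{z};\underline{q})(\mathcal{A})$ is independent of the choice of $w$. Fix $a\in\{1,\dots,n\}$ and $v\in\mathbb{C}$, and for large $t$ evaluate the algebra with the choice $w(t):=z_a(t)+v$. By the formula in the proof of Lemma~\ref{lem:equal-params-gaudin-algebra}, on a generator $g\otimes t^{s}\in\hat{\gl_r}_-$ one has
\[
\phi_{w(t)}(\underline{z}(t);\underline{q})(g\otimes t^{s})\;=\;\sum_{b=1}^{n}g^{(b)}\bigl(z_a(t)-z_b(t)+v\bigr)^{s}\;+\;\delta_{s,-1}\,\chi_{\underline{q}}(g)\,1^{\otimes n}.
\]
As $t\to\infty$ the hypothesis $z_{i+1}(t)-z_i(t)\to\infty$ forces $z_a(t)-z_b(t)\to\infty$ for all $b\neq a$, so the $b\neq a$ summands tend to $0$ while the $b=a$ summand is $g^{(a)}v^{s}$ for every $t$. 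Thus $\phi_{w(t)}(\underline{z}(t);\underline{q})(g\otimes t^{s})\to\iota_a\bigl(gv^{s}+\delta_{s,-1}\chi_{\underline{q}}(g)1\bigr)$, i.e.\ $\iota_a$ applied to the $n=1$ inhomogeneous Gaudin evaluation of $g\otimes t^{s}$. Since multiplication is continuous and $\phi_w(\underline{z};\underline{q})$ has finite-dimensional image in each PBW degree, this convergence on the generators $\hat{\gl_r}_-$ upgrades to convergence of the homomorphisms $\phi_{w(t)}(\underline{z}(t);\underline{q})$, as $t\to\infty$, to $\iota_a$ composed with the $n=1$ Gaudin evaluation map, whose image is $\mathcal{A}(\underline{q})^{(a)}\subseteq U(\glr)^{\otimes n}$.

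Now I assemble the argument. The graded dimension of $\mathcal{A}(\underline{z};\underline{q})$ is independent of $\underline{z}$ on the regular locus (a standard fact; see \cite{ryb_shift}), so the path $t\mapsto\mathcal{A}(\underline{z}(t);\underline{q})$ runs, in each degree, in a single (hence compact) Grassmannian, and any subsequential limit $L$ along $t_k\to\infty$ has that same graded dimension; applying \cite[Theorem 2]{ryb_shift} to a straight-line path $t\underline{z}_0$ with $\underline{z}_0\in\Rdel^n$ identifies this dimension with that of $\mathcal{A}(\underline{q})^{\otimes n}$. Writing $\mathcal{A}(\underline{z}(t_k);\underline{q})=\phi_{w(t_k)}(\underline{z}(t_k);\underline{q})(\mathcal{A})$ and using that a convergent family of images of a fixed set contains the image of the limiting homomorphism, the first paragraph gives $\mathcal{A}(\underline{q})^{(a)}\subseteq L$ for every $a$. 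As $L$ is commutative and contains the pairwise commuting subalgebras $\mathcal{A}(\underline{q})^{(1)},\dots,\mathcal{A}(\underline{q})^{(n)}$, which generate $\mathcal{A}(\underline{q})^{\otimes n}$, we get $\mathcal{A}(\underline{q})^{\otimes n}\subseteq L$, and the dimension count forces $L=\mathcal{A}(\underline{q})^{\otimes n}$. Every subsequential limit therefore equals $\mathcal{A}(\underline{q})^{\otimes n}$, so the limit exists and equals it.

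The main obstacle is bookkeeping rather than a new idea: one has to fix a meaning for $\lim$ of a family of subalgebras (degree by degree, inside finite Grassmannians), check that $\phi_w(\underline{z};\underline{q})$ has image of bounded PBW degree so that the computation is finite-dimensional in each degree, and quote from \cite{ryb_shift} the constancy of the graded dimension of $\mathcal{A}(\underline{z};\underline{q})$ on the regular locus. Granting these, the localization computation of the first paragraph is the only genuinely new input, and the hypothesis on the gaps is used precisely once, to ensure $z_a(t)-z_b(t)\to\infty$ for all $a\neq b$.
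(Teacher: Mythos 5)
Your argument is sound in substance, but note how it sits relative to the paper: the paper's proof is a one-line citation asserting that the proof of \cite[Theorem 2]{ryb_shift} applies word for word under the weaker gap hypothesis, whereas you reconstruct that argument explicitly. Your two ingredients are essentially the ones Rybnikov's method (and the paper's own Lemma~\ref{lem:limit-bethe-algebras}, which imitates it) uses: evaluating at \( w(t)=z_a(t)+v \) and letting \( t\to\infty \) is the same device as extracting the Laurent expansion of the generating series \( f_l(w;\uz;\uq) \) near the marked point \( z_a \), and your final step is the same Hilbert-series count, resting on the constancy of the graded character of \( \mathcal{A}(\uz;\uq) \) on the regular locus (in this paper that fact is quoted from \cite[Section~9.4]{HKRW}) together with the standard closedness argument that lets you pass limits of elements into Grassmannian limits of subspaces. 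So the route is the same in spirit; what your write-up buys is self-containedness -- the reader need not open \cite{ryb_shift} to verify that only the gap condition is used -- and it makes visible exactly where the hypothesis enters.

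One concrete caveat: your inference that \( z_{i+1}(t)-z_i(t)\to\infty \) for all \( i \) forces \( z_a(t)-z_b(t)\to\infty \) for all \( a\neq b \) fails for general complex paths, since consecutive differences can cancel (e.g.\ \( z_1=0 \), \( z_2=t \), \( z_3=1/2 \) has both consecutive differences diverging but \( z_3-z_1 \) constant, and there the limit algebra retains a genuine coupling between factors \( 1 \) and \( 3 \), so the conclusion itself fails). This looseness is inherited from the statement as printed over \( \Creg^n \); the intended hypothesis, and the one satisfied in every application (collision paths lie in \( \Rdel^n \)), is either that the path is ordered real or that all pairwise differences diverge. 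With that reading your proof is correct; without it, neither your argument nor the statement survives, so you should make the stronger divergence explicit where you use it.
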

\begin{proof}
  The proof of \cite[Theorem 2]{ryb_shift} applies word for word to the above statement.
\end{proof}
\subsection{}\label{sec:JMlimit}
It will also be desirable to understand a particular limit as \( \uz\to \underline{0} \). Consider a path \( \uz(t): (0,1]\rightarrow \Rdel^n \) such that \( \lim_{t\to 0} z_{i}(t)/z_{i+1}(t) = 0 \). Define \( \uJM_n = \lim_{t\to 0}\uBethe(\uz(t);\underline{0}) \) (the notation comes from the fact that on \( (V_{\varpi_1}^r)^{\ox n} \) this algebra coincides with the algebra of Jucys-Murphy operators).
\begin{proposition}
  \label{prp:JMlimit}
The limiting algebra \( \lim_{t\to 0} \Aa(\uz(t);\uq) \) contains and is generated by \( \Delta^n(\Aa(\uq)) \) and \( \uJM_n \).
\end{proposition}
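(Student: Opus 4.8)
The plan is to compute the limiting algebra using the tensor-leg decomposition from Proposition~\ref{prp:equal-params-gaudin-algebra} and then take the limit one "block" at a time, separating the scales. The key observation is that the condition $\lim_{t\to 0} z_i(t)/z_{i+1}(t) = 0$ means the parameters separate into a nested hierarchy: $z_n$ is much larger than $z_{n-1}$, which is much larger than $z_{n-2}$, and so on. So the limit should be obtainable by iterating the two limiting statements already available to us: Proposition~\ref{prp:ztoinfty} (which handles the situation where consecutive gaps go to infinity, producing a tensor product $\mathcal{A}(\underline{q})^{\otimes n}$) and the degeneration recorded in Proposition~\ref{prp:equal-params-gaudin-algebra} (which handles the situation $\underline{z}=(0,\ldots,0,z)$, producing $\mathcal{A}(0,z;\underline{q})^{(1,\ldots,n-1)(n)}$).

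\textbf{Step 1.} First I would reduce to understanding $\lim_{t\to 0}\mathcal{A}(0,\ldots,0,z(t);\underline{q})$ and then iterate. Concretely, rescale: write $z_i(t) = z_n(t)\,\zeta_i(t)$ where $\zeta_n(t)=1$ and $\zeta_i(t)\to 0$ for $i<n$. Since $\mathcal{A}(\underline{z};\underline{q})$ is independent of $w$, one checks (as in the proof of \cite[Theorem 2]{ryb_shift}, cf.\ Proposition~\ref{prp:ztoinfty}) that the limit only depends on the first-order separation of $z_n$ from the cluster $\{z_1,\ldots,z_{n-1}\}$. This identifies
\[
\lim_{t\to 0}\mathcal{A}(\underline{z}(t);\underline{q}) \;=\; \Big(\lim_{t\to 0}\mathcal{A}(z_1(t),\ldots,z_{n-1}(t);\underline{q})\Big)^{(1,\ldots,n-1)} \cdot \mathcal{A}(\underline{q})^{(n)},
\]
more precisely the subalgebra of $U(\gl_r)^{\otimes n}$ generated by the image of the inductively-defined limit on the first $n-1$ legs together with the copy $\mathcal{A}(\underline{q})^{(n)}$ sitting in the last leg. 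The base case $n=1$ is trivial since $\mathcal{A}(\underline q)^{\otimes 1}=\mathcal{A}(\underline q)$.

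\textbf{Step 2.} Now I would run the induction. By the inductive hypothesis applied to the truncated path $(z_1(t),\ldots,z_{n-1}(t))$, that inner limit is generated by $\Delta^{n-1}(\mathcal{A}(\underline{q}))$ and $\underline{\mathrm{JM}}_{n-1}$. Spreading this over legs $1,\ldots,n-1$ via $\Delta^{(n-1)}$ and appending $\mathcal{A}(\underline q)^{(n)}$: the image of $\Delta^{n-1}(\mathcal{A}(\underline q))$ under the further coproduct, together with $\mathcal{A}(\underline q)^{(n)}$, jointly generate $\Delta^n(\mathcal{A}(\underline q))$ — this is just coassociativity. And the generators of $\underline{\mathrm{JM}}_{n-1}$ (living on legs $1,\ldots,n-1$) together with $\mathcal{A}(\underline q)^{(n)}$ and $\Delta^n(\mathcal{A}(\underline q))$ are exactly the defining generators of $\underline{\mathrm{JM}}_n$: indeed $\underline{\mathrm{JM}}_n = \lim_{t\to 0}\mathcal{A}(\underline z(t);\underline 0)$ is built the same way but at $\underline q=\underline 0$, and the Gaudin/Jucys–Murphy generators interpolate smoothly in $\underline q$, so the combinatorial recursion of generators is identical. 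Hence the algebra generated coincides with $\langle \Delta^n(\mathcal{A}(\underline q)),\ \underline{\mathrm{JM}}_n\rangle$, as claimed. One should also note commutativity of the limiting algebra is automatic, being a limit of commutative algebras (a limit in a Grassmannian of subspaces of fixed dimension, the dimension being controlled as in \cite{ryb_shift,HKRW}).

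\textbf{The main obstacle} I expect is making the "limit one block at a time" argument rigorous: one must show that the Grassmannian limit of $\mathcal{A}(\underline z(t);\underline q)$ along the hierarchical path genuinely factors as the successive limits, i.e.\ that taking $z_n\to\infty$ relative to the cluster first and then degenerating the cluster gives the same subspace of $U(\gl_r)^{\otimes n}$ as the simultaneous limit. This is exactly the kind of "the proof of \cite[Theorem 2]{ryb_shift} applies" statement used for Proposition~\ref{prp:ztoinfty}, and the safest route is to invoke that machinery: express the generators of $\mathcal{A}(\underline z;\underline q)$ via the shift-of-argument construction, track their leading behaviour in each $z_i$ as $t\to 0$ under the scaling $z_i = z_n\zeta_i$, and read off that the span stabilises to the asserted algebra. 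I would present Steps 1–2 in detail and cite \cite{ryb_shift} for the analytic control of the limit.
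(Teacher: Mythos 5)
Your Step 1 is where the argument breaks, and it breaks because it conflates two different asymptotic regimes. The hypothesis on the path is that all $z_i(t)\to 0$ with $z_i(t)/z_{i+1}(t)\to 0$: the points all collide at the origin and only their \emph{ratios} separate, while every difference $z_{i+1}(t)-z_i(t)$ tends to $0$. Splitting the last leg off as a tensor factor $\Aa(\uq)^{(n)}$ is the behaviour of Proposition~\ref{prp:ztoinfty} (and of \cite[Theorem 2]{ryb_shift}), which requires the \emph{differences} to tend to infinity; it is not available here. Concretely, at $n=2$ your displayed identity asserts $\lim_{t\to0}\Aa(z_1(t),z_2(t);\uq)=\Aa(\uq)^{(1)}\cdot\Aa(\uq)^{(2)}=\Aa(\uq)^{\ox 2}$, which is false: the quadratic Hamiltonian $H_1=(z_1-z_2)^{-1}\sum_{i,j}E_{ij}^{(1)}E_{ji}^{(2)}+\sum_i q_iE_{ii}^{(1)}$ lies in $\Aa(\uz(t);\uq)$, so $(z_1(t)-z_2(t))H_1$ converges (the difference goes to $0$) and places $\Omega_{12}=\sum_{i,j}E_{ij}^{(1)}E_{ji}^{(2)}$ in the limiting algebra; since $\Omega_{12}$ does not commute with $E_{11}^{(1)}\in\Aa(\uq)^{(1)}$ and a limit of commutative algebras is commutative, the limit cannot contain $\Aa(\uq)^{(1)}$ (nor, for general $n$, the factor $\Aa(\uq)^{(n)}$ you adjoin). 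The limit is the genuinely different algebra $\langle\Delta\Aa(\uq),\uJM_2\rangle$; indeed the whole point of the paper is that the $\uz\to\underline{0}$ and $\uz\to\underline{\infty}$ limits differ, which is what makes the parallel transport between them (RSK) nontrivial. Step 2 has an independent error of the same kind: $\Delta^{n-1}\Aa(\uq)$ placed in legs $1,\dots,n-1$ together with $\Aa(\uq)^{(n)}$ do \emph{not} generate $\Delta^n\Aa(\uq)$ ``by coassociativity'', because $\Aa(\uq)$ is not a subcoalgebra: already $\Delta(G_{\underline{h}})$ has cross terms $\sum_{i<j}\frac{h_i-h_j}{q_i-q_j}\bigl(E_{ij}\ox E_{ji}+E_{ji}\ox E_{ij}\bigr)$ which fail to commute with $E_{11}\ox 1$, hence do not lie in $\Aa(\uq)^{\ox 2}$; likewise $\uJM_n$ is generated by $\uJM_{n-1}\ox 1$ and $\uJM_2^{(1\dots n-1)(n)}$, not by adjoining $\Aa(\uq)^{(n)}$, which is not even contained in the limit.

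The paper's own proof is the one it cites from \cite[Proposition 10.16(1)]{HKRW}, and the technique is spelled out in this paper in the proof of Lemma~\ref{lem:limit-bethe-algebras}: one works with the generating series $f_l(w;\uz(t);\uq)=\phi_w(\uz(t);\uq)(S_l)$; evaluating at fixed $w$ as all $z_i\to0$ gives, via Lemma~\ref{lem:equal-params-gaudin-algebra}, the containment $\Delta^n\Aa(\uq)\subseteq\lim_{t\to0}\Aa(\uz(t);\uq)$; rescaling by the invariance $\Aa(\uz;\uq)=\Aa(c^{-1}\uz;c\uq)$ (so that the inhomogeneity is scaled to $0$) and taking principal parts at $w=z_a(t)$ gives $\uJM_n\subseteq\lim_{t\to0}\Aa(\uz(t);\uq)$; equality then follows not from any explicit generator bookkeeping of your sort but from comparing Hilbert series, i.e.\ the number and degrees of algebraically independent generators, which are preserved in the limit. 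If you want an induction on $n$, the correct intermediate degenerations are those of Lemma~\ref{lem:limit-bethe-algebras}, along a deformed path in which the last point is additionally pushed to infinity relative to the collapsing cluster --- not a naive splitting-off of the last tensor leg along the given path.
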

\begin{proof}
  This is a generalisation of \cite[Proposition~10.16 (1)]{HKRW} and the same proof works here. \end{proof} A very similar argument will be used in Lemma~\ref{lem:limit-bethe-algebras} in a more complicated situation.  
\subsection{} 
\label{GTfamily} The family of algebras \( \mathcal{A}( \underline{q}) \) with \( \underline{q}\in \Creg^r \) are constant under the action of \( \CC \rtimes \CC^* \) on \( \Creg^r \) by affine shifts and dilations. We thus have a family of algebras over \( \Creg^r/\CC \rtimes \CC^* = \mathbb{P}^1(\CC)^{r+1}_{\text{reg}}/ PSL(2,\CC) \), where we have added the point at infinity to realise the right hand side of this equality. By \cite[Theorem 2.5]{aguirre_felder_veselov_2011} and \cite[Theorem 10.8]{HKRW} this can be extended to a flat family of maximally commutative subalgebras of \( U(\glr) \) over \( \overline{\mathcal{M}}_{0,r+1} \), the moduli space of \( r+1 \) points on genus \( 0 \) curves. We wish to describe one particular limit point. 
\subsection{} 
We embed \( \gl_{r-1} \subset \glr \) as the set of matrices with zero final row and column. The { Gelfand-Tsetlin subalgebra} \( \uGT_r = \langle ZU(\gl_1), ZU(\gl_2), \ldots,ZU(\glr) \rangle \subset U(\glr) \) is the commutative subalgebra generated by the successive centres of these embedded general linear Lie algebras. 
\begin{proposition}[{\cite[Lemma 4]{ryb_shift}}]
  \label{prp:GT-limit}
Let \( \uq = (q_1,q_2,\ldots,q_r) \in \Creg^r \) and \( \uq(t) =(q_1t^{r-1},q_2t^{r-2},\ldots,q_r) \). Then \(  \lim_{t\rightarrow 0} \mathcal{A}(\underline{q}(t)) = \uGT_r \), the Gelfand-Tsetlin subalgebra. 
\end{proposition}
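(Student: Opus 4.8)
The plan is to view the statement through the flat family of maximally commutative subalgebras of $U(\gl_r)$ over $\overline{\mathcal{M}}_{0,r+1}$ recalled just above: $\mathcal{A}(\uq)$ is the fibre over the configuration $(q_1,q_2,\ldots,q_r,\infty)$ of $r+1$ points on $\mathbb{P}^1(\CC)$, and the claim is an identification of the fibre over a particular boundary point. So there are two things to pin down: (a) the point of $\overline{\mathcal{M}}_{0,r+1}$ to which the path $t\mapsto(q_1t^{r-1},q_2t^{r-2},\ldots,q_r,\infty)$ converges as $t\to 0$, and (b) the fibre of the extended family over that point. That the limiting algebra is computed by this boundary fibre is exactly the content of the flat extension $\cite[Theorem 2.5]{aguirre_felder_veselov_2011}$, $\cite[Theorem 10.8]{HKRW}$, together with the invariance of $\mathcal{A}(\uq)$ under affine shifts and dilations of $\uq$ noted above.

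For (a): as $t\to 0$ the points $q_r$ and $\infty$ remain away from one another and from the other $r-1$ marked points, all of which collide at $0$; moreover within this cluster the successive ratios $q_i(t)/q_{i+1}(t)=(q_i/q_{i+1})\,t\to 0$, so the cluster itself degenerates hierarchically. Repeatedly rescaling the coordinate on $\mathbb{P}^1$ to resolve the cluster near $0$ shows that the limiting stable curve is the ``caterpillar'': a chain of $r-1$ copies of $\mathbb{P}^1$ along which the marked points peel off in the order $\{\infty,q_r\},\{q_{r-1}\},\{q_{r-2}\},\ldots,\{q_2\},\{q_1\}$, exactly one marked point on each interior component.

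For (b): the family is compatible with the gluing maps $\overline{\mathcal{M}}_{0,a+1}\times\overline{\mathcal{M}}_{0,b+1}\to\overline{\mathcal{M}}_{0,a+b}$ (this compatibility being part of the cited results), so it suffices to understand the elementary degeneration in which a single eigenvalue $q_r$ is pulled far from $q_1,\ldots,q_{r-1}$. I claim this degeneration carries the fibre to the subalgebra of $U(\gl_r)$ generated by $ZU(\gl_r)$, by $\diag(\gl_r)$, and by the $\gl_{r-1}$-shift-of-argument algebra $\mathcal{A}_{\gl_{r-1}}$ attached to the remaining eigenvalues, where $\gl_{r-1}\subset\gl_r$ is the top-left corner --- precisely the Levi of the parabolic singled out by ``$q_r$ is far away''. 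Applying this identification successively along the caterpillar of (a) and inducting on $r$, the boundary fibre is $\langle ZU(\gl_r),\uGT_{r-1}\rangle=\uGT_r$, with base case $r=1$ trivial. This is the assertion.

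The real content, and the main obstacle, is the ``one eigenvalue splits off'' input in (b). I would establish it with explicit generators: write generators of $\mathcal{A}(\uq)$ as the coefficients of the Talalaev-type universal operator built from the matrix $\mathrm{diag}(\uq)+u^{-1}E$ with $E=(E_{ij})$, substitute $\uq\mapsto(q_1t^{r-1},\ldots,q_r)$, and check that degree by degree the leading-order-in-$t$ coefficients are exactly the Gelfand--Tsetlin generators --- the relevant minors block-triangularise in the staircase limit, the top-left $k\times k$ corner producing $ZU(\gl_k)$. The one subtlety is that passing to leading-order terms genuinely computes the limit in the relevant Grassmannians of graded pieces; this is legitimate because flatness of the family already fixes the dimensions of those pieces, so it is enough to exhibit sufficiently many linearly independent limiting elements, which the leading-order computation does. (Alternatively, one can invoke the known description, due to Vinberg and Shuvalov, of limits of Mishchenko--Fomenko subalgebras as the shift parameter tends to a parabolic Levi, which yields exactly $\langle ZU(\gl_r),\mathcal{A}_{\gl_{r-1}},\diag(\gl_r)\rangle$.)
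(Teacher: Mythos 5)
The paper does not prove this statement at all: it is quoted verbatim from Rybnikov (\cite[Lemma 4]{ryb_shift}), so there is no internal argument to compare with, and your proposal is a genuine reconstruction of a proof. Its skeleton is sound and is essentially the classical route (Vinberg's observation, Shuvalov's description of limits of Mishchenko--Fomenko subalgebras, which is also how Rybnikov argues), dressed in the $\overline{\mathcal{M}}_{0,r+1}$ language that the paper sets up in \ref{GTfamily}. Two remarks. First, your key input in (b) --- that pulling the single eigenvalue $q_r$ away from the cluster degenerates the fibre to the algebra generated by $ZU(\glr)$, $\diag(\glr)$ and the shift-of-argument algebra of the Levi $\gl_{r-1}$ --- is exactly what this paper quotes elsewhere as \cite[Corollary~10.12]{HKRW}: there the limit is described as generated by $\iota\mathcal{A}((q_1,\ldots,q_{r-1}))$ and $\mathcal{A}((0,\ldots,0,q_r))$, and since the latter is generated by $ZU(\glr)$, $ZU(\gl_{r-1})$ and $E_{rr}$, the two descriptions coincide. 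Citing that result would spare you the Talalaev-operator computation, although your leading-order-in-$t$ argument, combined with the flatness/Hilbert-series bookkeeping that lets "enough independent limiting elements" force equality, is legitimate and is in fact close to Rybnikov's own method. You do need, and correctly flag, the compatibility of the family with iterated (operadic) degenerations, so that the limit of the generated algebra along the caterpillar is the algebra generated by the limits; this is part of the HKRW boundary description rather than of the AFV extension theorem itself, so the appeal should be pinned there. Second, a small slip in (a): the limiting stable curve is a chain of $r-1$ components whose terminal component carries both $q_1$ and $q_2$ (a component carrying only $q_1$ and a single node would not be stable), so the peeling order is $\{\infty,q_r\},\{q_{r-1}\},\ldots,\{q_3\},\{q_1,q_2\}$, and the induction bottoms out at $r=2$, where $\mathcal{A}((q_1,q_2))=\uGT_2$ because both contain the Cartan and $ZU(\gl_2)$ and both are maximal commutative of the same size. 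Neither point derails the argument; with the HKRW citation (or the Vinberg--Shuvalov alternative you mention) supplying the one-eigenvalue-splits-off step, your sketch completes into a correct proof of the proposition the paper only cites.
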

\section{Statement of Main Theorem }
\label{sec:eigensp-semist-tabl}
In this section we work with \( \underline{z} = (z_1,z_2,\ldots,z_n) \in \Rdel^n \) and \( \underline{q} = (q_1,q_2,\ldots,q_r) \in \Rdel^r \).
\subsection{Notation} We are going to be interested in the spectrum of inhomogeneous Gaudin algebras. Recall from the introduction if \( M \) is a \( \mathcal{A}(\underline{z};\underline{q}) \)-module, we will write \( \mathcal{E}_{\underline{z};\underline{q}}(M) \) instead of \( \mathcal{E}_{\mathcal{A}(\underline{z};\underline{q})}(M) \). When \( n=1 \) we will denote the spectrum of \( \Aa(\uq) \) on \( M \) by \( \Ee_{\uq}(M) \). For a weight \( \lambda \), let \( V^r_\lambda \) be the corresponding irreducible \( \glr \)-module. Usually we will drop the superscript and write \( V_\lambda \), however occasionally we will include it for clarity. If \( M = V_{\lambda^{(1)}} \ox V_{\lambda^{(2)}} \ox \cdots \ox V_{\lambda^{(n)}} \) for some tuple of weights \( \underline{\lambda} = (\lambda^{(1)},\lambda^{(2)},\ldots,\lambda^{(n)}) \) then we will simplify notation further by writing  \( \mathcal{E}_{\underline{z};\underline{q}}(\underline{\lambda}) \) and  finally, if \( \underline{\lambda} = (k_1\varpi_1,k_2\varpi_1,\ldots,k_n\varpi_1) \) we will write \( \mathcal{E}_{\underline{z};\underline{q}}(\underline{k}) \). 
\subsection{Combinatorial Description} The Lie algebra \( \gl_r \) acts by left multiplication on \( \CC[\Mat_{r\times n}] \), polynomial functions on the space of complex matrices. The spectrum of the inhomogeneous Gaudin algebras on this space is the main object of our story. We can give an alternative description as follows. The algebra \( \glr \) acts on \( \CC[x_1,x_2,\ldots,x_r] \)  by differential operators where \( E_{ij} \) acts by \( x_i\partial_j \). The submodule of homogeneous degree \( k \) polynomials is isomorphic to \( V_{k\varpi_1} \). There is an isomorphism
\[ \CC[\Mat_{r\times n}] \longrightarrow \CC[x_1,x_2,\ldots,x_r]^{\ox n}; x_{ij} \mapsto x_i^{(j)}. \]
\subsection{}\label{wt-sp-labels} A weight basis for \( \CC[\Mat_{r\times n}] \) is given by the monomials \( x^A = \prod_{i,j} x_{ij}^{A_{ij}} \), where \( A = (A_{ij}) \in \Mat_{r\times n}(\NN) \). In other words, this weight basis is labelled by \( \Mat_{r\times n}(\NN) \). We now construct a bijection $$ \alpha: \Ee_{\uz;\uq}(r\times n) :=  \Ee_{\uz;\uq}(\CC[\Mat_{r\times n}]) \longrightarrow  \Mat_{r\times n}(\NN). $$ 
\subsection{}
The weight spaces of \( \CC[x_1,x_2,\ldots,x_r] \) are spanned by the monomials and so one dimensional. Since \( \Aa(\uq) \) contains the Cartan subalgebra, it thus acts with simple spectrum. It follows that \( \Aa(\uq)^{\ox n} \) has simple spectrum on \( \CC[\Mat_{r\times n}] \). Recall from \ref{ztoinfty} that \( \lim_{t\rightarrow \infty} \mathcal{A}(t\underline{z}; \underline{q}) = \mathcal{A}(\underline{q})^{\otimes n} \). We can thus conclude that \( \Aa(\uz;\uq) \) has simple spectrum on \( \CC[\Mat_{r\times n}] \) for generic \( \uz \) (alternatively this can be deduced from \cite[Corollary 6 and proof of Corollary 4]{Feigin:2010df}). Following \ref{continuation}, parallel transport along the line \( \{ t \uz: 1\leq t\leq \infty\} \) therefore gives a bijection \[  p_\infty: \mathcal{E}_{\underline{z};\underline{q}}(r\times n)\longrightarrow \mathcal{E}_{\underline{\infty};\uq}(r\times n). \]
 \label{sec:ebasis} Since the weight spaces in \( \CC[\Mat_{r\times n}] \) are one dimensional, they coincide with the spectrum of \( \Aa(\uq)^{\ox n} \), that is \( \Ee_{\underline{\infty};\uq}(r\times n) = \{\CC x^A \mid A \in \Mat_{r\times n}(\NN)\} \). We thus have a bijection
\[  \comb_{\underline{\infty}; \uq}^{r \times n} : \mathcal{E}_{\underline{\infty};\uq}(r \times n)  \longrightarrow \Mat_{r \times n}(\NN); \,\,  \CC x_A \mapsto A.  \] 
After composition with \( p_\infty \), this produces a labelling of the spectrum \( \Ee_{\uz;\uq}(r\times n) \) by a non negative integer matrices:
\[ \alpha = \comb_{\underline{\infty};\uq}^{r \times n} \circ p_{\infty}: \Ee_{\uz;\uq}(r \times n) \longrightarrow \Mat_{r \times n}(\NN). \]
\subsection{} \label{ztozero} We now construct a mapping $$\beta: \mathcal{E}_{\uz, \uq}(r\times n) \longrightarrow \bigsqcup_{\lambda \in \Part_{\le r}} \SSYT(\lambda)$$
\subsection{} Choose a path \( \underline{z}(t) \in \Rdel^n \) for \( t \in (0,1] \) such that \( \underline{z}(1)=\underline{z} \) and  \( \lim_{t \to 0} z_i(t) =0 \). By construction, \( \mathcal{A}(\underline{z}; \underline{q}) \) is the image of the algebra \( \mathcal{A} \) under the mapping \[ \phi_w(\underline{z}; \underline{q}) = (\phi_{w,\underline{z}} \otimes (\chi_{\uq}\circ \phi_{\infty}))\circ \Delta: U(\hat{\glr}_-) \rightarrow U(\glr)^{\otimes n}. \]
Therefore,  \( \lim_{t\to 0}\mathcal{A}(\underline{\underline{z}(t)}; \underline{q}) \) contains the image of \( \mathcal{A} \) under \( (\phi_{w,\underline{0}} \otimes (\chi_{\uq}\circ \phi_{\infty}))\circ \Delta \), which is \( \Delta^{n}\mathcal{A}(\uq) \) (see~\ref{lem:equal-params-gaudin-algebra}). Thus, by parallel transport along \( \uz(t) \) (and restriction to \( \Delta^n\Aa(\uq) \)) we obtain a map
\[ p_{\uz=0}: \Ee_{\uz;\uq}(r \times n) \longrightarrow \Ee_{\uq}(r \times n) \]
where \( \Ee_{\uq}(r \times n) = \Ee_{\uq}(\CC[\Mat_{r \times n}]) \), the spectrum of \( \Aa(\uq) \) acting diagonally on \( \CC[\Mat_{r \times n}] \).  
 Since \( \Aa(\uq) \) contains \( ZU(\glr) \) and the finite dimensional irreducible \( \glr \)-modules are determined by their central character, an eigenspace \( E \in \Ee_{\uq}(r \times n) \) is contained in an isotypic component of \( \CC[\Mat_{r \times n}] \). This produces a map \( \xi:  \Ee_{\uq}(r \times n) \longrightarrow \bigsqcup_{\lambda} \Ee_{\uq}(\lambda) \).
For any \( \underline{q}\in \Rdel^r \), Proposition~\ref{prp:GT-limit} and parallel transport along the path \( \uq(t) = (t^{r-1}q_1,t^{r-2}q_2,\ldots,q_r) \) gives an identification \( \kappa: \Ee_{\uq}(\lambda) \longrightarrow \Ee_{\underline{0}}(\lambda) \).
 \label{sec:GTdeg} Now denote the spectrum of the Gelfand-Tsetlin algebra \( \uGT_r \), acting on \( V_\lambda \) by \( \Ee_{\underline{0}}(\lambda) \). The set \( \Ee_{\underline{0}}(\lambda) \) is identified with the \( \SSYT_r(\lambda) \) as follows.
Let \( E \in \mathcal{E}_{\underline{0}}(\lambda) \). For each \( 1 \le i \le r \) $E$ is contained in an irreducible \( \gl_i \)-submodule \( V^i_{\lambda^{(i)}} \subset V_{\lambda} \). Thus to \( E \) we associate a sequence \( \lambda^{(1)},\lambda^{(2)},\ldots,\lambda^{(r)}=\lambda \) of partitions, where \( \lambda^{(i-1)} \subset \lambda^{(i)} \) and the skew-shape \( \lambda^{(i)}\setminus\lambda^{(i-1)} \) does not have more than one box in any column. From such a sequence, we produce a tableau by filling the boxes corresponding to \( \lambda^{(i)} \setminus \lambda^{(i-1)} \) with \( i \). The map \[  \comb_{\underline{0}}: \Ee_{ \underline{0}}(\lambda) \longrightarrow \SSYT_r(\lambda)  \] is a bijection. We will write \( \comb_{\underline{0}}^r \) for this map if we need to emphasise that it is induced by \( \uGT_r \subset U(\glr) \).
Together these mappings produce $$\beta = \comb_{\underline{0}}\circ \kappa \circ \xi \circ p_{\uz = 0} : \mathcal{E}_{\uz, \uq}(r\times n) \longrightarrow  \bigsqcup_{\lambda \in \Part_{\le r}} \SSYT(\lambda) $$
\subsection{}
\label{eq:Smap}
Combining the previous sections we obtain a mapping
\begin{equation*}
  S = \beta \circ \alpha^{-1} : \Mat_{r \times n}(\NN) \longrightarrow \bigsqcup_{\lambda \in \Part_{\le r}} \SSYT(\lambda).
\end{equation*}
\subsection{Duality}\label{duality} 
The algebra \( \gl_n \) acts on \( \CC[\Mat_{r \times n}] \) by transposed matrix multiplication. We can consider the \( \gl_n \) module \( \CC[y_1,\ldots,y_n] \) where \( E_{ij} \) acts by \( y_i\partial_j \) and we obtain the following identifications
\[ \CC[x_1,\ldots,x_r]^{\ox n} \cong \CC[\Mat_{r\times n}] \cong \CC[y_1,\ldots,y_n]^{\ox r}  \]
all of which can be considered \( \glr \oplus \gl_n \) modules (the actions commute). In addition, there are actions of \( U(\glr)^{\ox n} \) and \( U(\gl_n)^{\ox r} \) (these actions do not commute). We will denote the relevant maps by
\begin{align*}
  \pi^r &: U(\gl_r)^{\ox n} \longrightarrow \End(\CC[\Mat_{r\times n}]) \\
  \pi^n &: U(\gl_n)^{\ox r} \longrightarrow \End(\CC[\Mat_{r\times n}]).
\end{align*}
\subsection{}
We now define a map analogous to~\ref{eq:Smap} by sending \( \uq \to 0 \). Choose a path \( \underline{q}(t) \in \Rdel^r \) for \( t \in (0,1] \) such that \( \uq(1)=\uq \) and  \( \lim_{t \to 0} q_i(t) =0 \). In this limit \( \lim_{t\to 0}\uBethe(\underline{z}; \underline{q}(t)) \) contains  \( \uBethe(\underline{z}; \underline{0}) \subset U(\glr)^{\ox n} \). According to \cite[Theorem~6]{Mukhin:2009in} we have that \( \pi^r(\uBethe(\underline{z};\underline{0})) = \pi^n(\Delta^r\uBethe(\underline{z})) \). This allows us to repeat the construction from sections~\ref{ztozero} to get a map $\beta'$, obtained by composing \( \Ee_{\uz;\uq}(r\times n)\longrightarrow \Ee_{\uz}(r \times n) \) with the operation to then produce a semistandard tableau. Putting this all together gives us a map
\begin{equation}
  \label{eq:Tmap}
  T = \beta' \circ \alpha^{-1}: \Mat_{r \times n}(\NN) \longrightarrow \bigsqcup_{\lambda \in \Part_{\le n}} \SSYT(\lambda).
\end{equation}
\subsection{Main Result} We can now state the main result of the paper.
\begin{theorem}\label{thm:main}
The assignment \( A \mapsto (S(A),T(A)) \) agrees with the RSK correspondence. More precisely, \( S(A)=P(A) \) and \( T(A) = Q(A) \) for all \( A \in \Mat_{r\times n}(\NN) \).
\end{theorem}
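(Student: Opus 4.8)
The plan is to deduce Theorem~\ref{thm:main} from the rigidity criterion of Proposition~\ref{prp:rsk-rigidity}, feeding it the $\glr$-crystal structure on Gaudin spectra supplied by \cite{HKRW}. Keep the notation of the theorem: $S(A)\in\SSYT_r$ is the tableau coming from the $\uz\to\underline 0$, then $\uq\to\underline 0$ degenerations of the $\glr$-Gaudin algebra, and $T(A)\in\SSYT_n$ is the dual tableau. Consider $g\colon\Mat_{r\times n}(\NN)\to\bigsqcup_\lambda\SSYT_n(\lambda)\times\SSYT_r(\lambda)$, $g(A)=(T(A),S(A))$. Since the ``$S$'' appearing in Proposition~\ref{prp:rsk-rigidity} is its $\SSYT_n$-valued component, which here is $T$, that proposition reduces the theorem to two statements: (i) $g$ is an isomorphism of $\glr$-crystals, the target carrying the structure of Proposition~\ref{prp:RSK-crystal-iso} ($\glr$ acting on the $\SSYT_r$-factor, the $\SSYT_n$-factor recording the $\#\SSYT_n(\lambda)$ multiplicity labels); and (ii) $T(A)|_{n-1}=P(A)|_{n-1}$ for every $A$, where $|_{n-1}$ deletes all boxes containing $n$. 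Granting (i) and (ii), Proposition~\ref{prp:rsk-rigidity} forces $g=\RSK$, which is the content of the theorem.

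Statement (i) I would obtain by combining the two external inputs. For $\uq\in\Rdel^r$ the centraliser $\zz_{\glr}(\uq)$ is the Cartan, and by \cite{HKRW} the spectrum $\mathcal E_{\uz;\uq}(r\times n)$ is a $\glr$-crystal with the property that $\alpha$ of Section~\ref{sec:ebasis} identifies it with $\Mat_{r\times n}(\NN)$ equipped with the tensor-product crystal structure of Section~\ref{sec:eigensp-semist-tabl}, while in the opposite degeneration the map $\comb_{\underline 0}^r$ underlying $\beta$ is the standard crystal labelling of $\SSYT_r(\lambda)$; hence $S=\beta\circ\alpha^{-1}$ is a $\glr$-crystal morphism onto each $\SSYT_r(\lambda)$. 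That $T$ is constant along the $\glr$-crystal operators and separates the $\#\SSYT_n(\lambda)$ copies of $\SSYT_r(\lambda)$ over a fixed $\lambda$ is where \cite{Mukhin:2009in,Mukhin:2014ga} enter: $T$ is manufactured entirely from the commuting $\gl_n$-picture through the identity $\pi^r(\Aa(\uz;\underline 0))=\pi^n(\Delta^r\Aa(\uz))$, which is compatible with the $\uq\to\underline 0$ degeneration of the crystal. Putting these together, $g=(T,S)$ is a $\glr$-crystal isomorphism.

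Statement (ii) is the heart of the argument, and I would prove it by induction on $n$. On the combinatorial side, the column indices strictly less than $n$ occurring in the two-line array of $A$ form exactly the array of $A'$, where $A'\in\Mat_{r\times(n-1)}(\NN)$ is $A$ with its last column removed; since RSK insertion is compatible with restriction to an initial segment of the alphabet (\cite[Ch.~4]{fulton_Young}), $P(A)|_{n-1}=P(A')$. On the Gaudin side, I would deform the defining paths so that $z_n\to\infty$ first: by a variant of Proposition~\ref{prp:ztoinfty} (the degeneration performed in the forthcoming Lemma~\ref{lem:limit-bethe-algebras}) the limiting algebra factorises as $\Aa(z_1,\dots,z_{n-1};\uq)\otimes\Aa(\uq)$ acting on $\CC[\Mat_{r\times(n-1)}]\otimes\CC[x_1,\dots,x_r]$, compatibly with $\alpha$, so that the monomial $x^A$ decomposes as $x^{A'}$ times the last-column monomial. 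In the dual description $\CC[\Mat_{r\times n}]\cong\CC[y_1,\dots,y_n]^{\otimes r}$ the embedded $\gl_{n-1}\subset\gl_n$ acts trivially on the last-column variables, so both the $\uq\to\underline 0$ degeneration and the $\gl_n$-Gelfand--Tsetlin reading respect this factorisation after restriction along $\gl_{n-1}\subset\gl_n$; this identifies $T(A)|_{n-1}$ with the tableau $T(A')$ attached to $A'$ in the $r\times(n-1)$ setting. The inductive hypothesis (Theorem~\ref{thm:main} for $A'$, available from (i) and (ii) at level $n-1$) gives $T(A')=P(A')=P(A)|_{n-1}$, which is (ii). The base case $n=1$ is immediate: all transports preserve weight spaces, so both tableaux are read off the monomial $x^A$ and coincide with RSK by inspection.

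The step I expect to be the real obstacle is (ii), and within it the compatibility of limits needed to make the inductive step precise: one must check that the degenerate parallel transport defining $T$ commutes, up to homotopy inside the family $\{\Aa(\uz;\uq)\}$ (using the covering-space picture of~\ref{continuation}), with the further degeneration $z_n\to\infty$, and that all of this respects the tensor decomposition $\CC[\Mat_{r\times n}]\cong\CC[\Mat_{r\times(n-1)}]\otimes\CC[x_1,\dots,x_r]$ and the embedding $\gl_{n-1}\subset\gl_n$. This is precisely the ``limits of Gaudin algebras'' bookkeeping anticipated after Proposition~\ref{prp:JMlimit}; the available tools are the tensor-embedding identities of Section~\ref{sec:equal-params-gaudin} (Lemma~\ref{lem:equal-params-gaudin-algebra}, Proposition~\ref{prp:equal-params-gaudin-algebra}), Propositions~\ref{prp:ztoinfty} and~\ref{prp:GT-limit}, and homotopy invariance of parallel transport.
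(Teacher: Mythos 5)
Your skeleton is in fact the paper's own: Proposition~\ref{prp:rsk-rigidity} applied to the pair of tableaux, the crystal structure of \cite{HKRW} supplying the isomorphism property, an induction on $n$ that splits off the last column of $A$, and $(\glr,\gl_n)$-duality plus transposition to handle the second tableau. The difficulty is that your steps (i) and (ii) assume exactly the material that constitutes the proof, and neither follows formally from the tools you list. For (i): that $S=\beta\circ\alpha^{-1}$ intertwines crystal structures is not mere bookkeeping; it needs the proposition of Section~\ref{sec:crystal-structures} (that $\comb_{\underline 0}\circ\kappa$ is a $\glr$-crystal isomorphism, itself proved by an induction on $r$ through $\overline{\mathcal{M}}_{0,r+1}$) together with Theorem~\ref{thm:HKRW-mon-thm}. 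Worse, since $S$ is defined by degenerating $\uz\to\underline 0$ and then $\uq\to\underline 0$, while $T$ is defined by degenerating $\uq\to\underline 0$ first and then working on the $\gl_n$ side, even the statement that $\sh T(A)=\sh S(A)$ — without which $g$ does not land in $\bigsqcup_\lambda\SSYT_n(\lambda)\times\SSYT_r(\lambda)$ and Proposition~\ref{prp:rsk-rigidity} cannot be invoked — and the constancy of $T$ along crystal operators require commuting these two degenerations. The identity $\pi^r(\uBethe(\uz;\underline 0))=\pi^n(\Delta^r\uBethe(\uz))$ does not give this, because the crystal operators and the labellings are defined by parallel transport, not by the commuting Lie-algebra actions; generic $\uz,\uq$ Gaudin algebras do not contain $\Delta^n ZU(\glr)$, so isotypic data is not preserved pointwise along the transports.

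For (ii), the combinatorial half ($P(A)|_{n-1}=P(A')$) is fine, but the Gaudin half ($T(A)|_{n-1}=T(A')$) is exactly the commutation-of-limits statement you yourself flag as ``the real obstacle,'' and it is not a consequence of homotopy invariance of covering-space transport: the relevant paths terminate at branch points, so one must show that the induced paths of \emph{limit algebras} inside the compactified family $\mathfrak{X}_{\uq}$ are continuous and homotopic. The paper does this by writing down the explicit deformation $\uz(s,t)$, computing the limit algebras at every $t$ (Lemma~\ref{lem:limit-bethe-algebras}, which uses the Laurent-expansion generators of \cite{ryb_shift}, Knop's centraliser theorem \cite{Knop:1994ei}, and a Hilbert-series argument to show the limits are \emph{generated by}, not merely contain, $\uJM_{n-1}$ and the two-point algebras), and then deducing continuity in Proposition~\ref{prp:gamma-continuous}; only then does the induction of Theorem~\ref{thm:HKRW-to-RSK} go through, with Theorem~\ref{thm:main} following by transposition. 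Citing Lemma~\ref{lem:limit-bethe-algebras} as an available tool is circular here: it is the technical heart of the proof of the statement you are proving. So the proposal is a correct outline of the intended strategy, but as written (i) and (ii) are unproven, and they are the substance of the theorem rather than routine verifications.
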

\section{Crystal structures on \( \Ee_{\uq}(\lambda) \)}
\label{sec:crystal-structures}
\subsection{Crystal structures from $ \mathcal{A}(\underline{q})$}
 \label{prp:GT-HKRW-crystals-agree}
 In order to prove Theorem \ref{thm:main} we will first understand better the mapping of \ref{sec:GTdeg} \[ \comb_{\underline{0}}\circ \kappa : \mathcal{E}_{\underline{q}}(\lambda) \longrightarrow \SSYT_r(\lambda). \] 
We know that \( \SSYT_r(\lambda) \) is an irreducible \( \glr \)-crystal. Thanks to ~\cite[Proposition 12.2]{HKRW}, there is a crystal structure defined directly on \( \mathcal{E}_{\underline{q}}(\lambda) \) through the representation theory of \( \mathcal{A}(\underline{q}) \) on \( V_{\lambda} \). 
\begin{proposition}
The mapping \( \comb_{\underline{0}}\circ \kappa  \) is an isomorphism of \( \glr \)-crystals.
\end{proposition}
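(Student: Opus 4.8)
The plan is to recognise the two crystal structures on $\mathcal{E}_{\underline{q}}(\lambda)$ as the "same" structure by transporting both along the degeneration path $\underline{q}(t) = (t^{r-1}q_1, \ldots, q_r)$ used to define $\kappa$ in Section~\ref{sec:GTdeg}. The key point is that the HKRW crystal structure of \cite[Proposition 12.2]{HKRW} on $\mathcal{E}_{\underline{q}}(\lambda)$ is itself constructed by limiting arguments within the flat family of algebras $\{\mathcal{A}(\underline{q})\}$ over $\overline{\mathcal{M}}_{0,r+1}$ discussed in Section~\ref{GTfamily}: the crystal operators $\tilde{e}_i, \tilde{f}_i$ are defined using the spectrum of $\mathcal{A}(\underline{q})$ near the boundary point where $\underline{q}$ degenerates so that $\mathfrak{gl}_{r-1}$ is "pulled out" from $\mathfrak{gl}_r$. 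So I would first recall precisely how the HKRW crystal operators are defined on $\mathcal{E}_{\underline{q}}(\lambda)$, then observe that the limiting algebra $\lim_{t\to 0}\mathcal{A}(\underline{q}(t)) = \uGT_r$ (Proposition~\ref{prp:GT-limit}) sits in the closure of this family in a way compatible with the boundary strata used to build the crystal.

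Concretely, I would argue as follows. First, by Proposition~\ref{prp:GT-limit} and the flatness from \cite[Theorem 10.8]{HKRW}, parallel transport $\kappa$ identifies $\mathcal{E}_{\underline{q}}(\lambda)$ with $\mathcal{E}_{\underline{0}}(\lambda)$ as sets, and—because parallel transport in a family of commutative subalgebras respects any sub-structure that varies algebraically—it intertwines the HKRW crystal on the source with whatever crystal the HKRW construction produces on $\mathcal{E}_{\underline{0}}(\lambda) = \SSYT_r(\lambda)$. Second, I would show that this induced crystal on $\SSYT_r(\lambda)$, defined via $\uGT_r \subset U(\gl_r)$, is the standard one: the Gelfand-Tsetlin branching $\gl_1 \subset \gl_2 \subset \cdots \subset \gl_r$ exactly records the tableau filling, and the HKRW crystal operator $\tilde{e}_i$ acts by moving through the spectrum of the relevant intermediate algebra, which at the GT-degeneration is controlled by the pair $\gl_{i-1} \subset \gl_{i+1}$ restriction — this is precisely the combinatorial recipe (the ``$i$-signature'' rule on the tableau) defining the standard $\gl_r$-crystal structure on $\SSYT_r(\lambda)$. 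Finally, since $\comb_{\underline{0}}$ is by definition the set bijection $\mathcal{E}_{\underline{0}}(\lambda) \to \SSYT_r(\lambda)$ matching eigenspaces to their GT-patterns, the composite $\comb_{\underline{0}} \circ \kappa$ carries the HKRW crystal to the standard crystal on $\SSYT_r(\lambda)$, which is the claim.

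The main obstacle I anticipate is the second step: matching the HKRW crystal operators, which are defined by a somewhat indirect monodromy/bending-flow procedure on the spectra of Gaudin algebras, with the elementary combinatorial crystal operators on semistandard tableaux. One must check that the boundary degeneration of $\overline{\mathcal{M}}_{0,r+1}$ governing the $i$-th crystal operator in HKRW is compatible with (indeed refines to) the GT degeneration $\underline{q}(t)$, so that the two limits can be taken in a commuting fashion — essentially an associativity statement for iterated limits of the family, of the same flavour as Proposition~\ref{prp:JMlimit} and the remark following it. I would handle this by an explicit analysis of the relevant rank-one (or rank-two) sub-family, reducing to the $\gl_2$ (resp.\ $\gl_1 \subset \gl_2$) case where the crystal operator is visibly the standard one, and then invoking the tensor-product compatibility of both crystal structures. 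Everything else is bookkeeping: functoriality of parallel transport and the explicit form of $\comb_{\underline{0}}$.
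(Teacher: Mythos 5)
Your overall orientation is right (use the $\overline{\mathcal{M}}_{0,r+1}$ family, relate the GT degeneration to the boundary strata that define the HKRW operators, reduce the rank), but the step you lean on in your first paragraph is precisely the point at issue and is false as a general principle: parallel transport of spectra along a path in the family does \emph{not} automatically intertwine the HKRW crystal structures. The crystal operators are themselves defined by parallel transport to specific boundary divisors $\overline{\mathcal{M}}(i,i+1)$, and whether transport along a given path commutes with them is a statement about homotopy classes of paths --- this is exactly where cactus-group monodromy lives, and different paths from $\uq$ to the GT point give genuinely different bijections. So ``parallel transport respects any sub-structure that varies algebraically'' cannot be the argument; the compatibility has to be established for the particular path $\uq(t)=(t^{r-1}q_1,\ldots,q_r)$. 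You do flag this obstacle, but your proposed repair (reduce to $\gl_2$ and invoke ``tensor-product compatibility of both crystal structures'') does not apply here: $V_\lambda$ is irreducible, so there is no tensor-product structure in play; the relevant structure is the branching $\gl_{r-1}\subset\glr$, and the correct reduction is an induction on $r$ along that chain.

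Concretely, the paper replaces $\uq(t)$ by a homotopic composite that first degenerates onto the divisor $\overline{\mathcal{M}}(r,\infty)$ and then performs the GT degeneration inside the smaller component. Transport along the first piece commutes with $\tilde{e}_i$ only for $1\le i\le r-2$, because $\overline{\mathcal{M}}(i,i+1)\cap\overline{\mathcal{M}}(r,\infty)\neq\emptyset$ exactly in that range (the divisors $\overline{\mathcal{M}}(r-1,r)$ and $\overline{\mathcal{M}}(r,\infty)$ do not meet), while the second piece is handled by the induction hypothesis together with the multiplicity-free decomposition $\Ee_{\tilde{\uq}^1(0)}(\lambda)=\bigsqcup_\mu\Ee_{(q_1,\ldots,q_{r-1})}(\mu)$. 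This only yields that $\comb_{\underline{0}}\circ\kappa$ is a $\gl_{r-1}$-crystal morphism; the missing ingredient in your sketch is the final upgrade: since the map preserves $\glr$-weights by construction, and a $\gl_{r-1}$-highest-weight element of $\SSYT_r(\lambda)$ is uniquely determined by its $\glr$-weight, a weight-preserving $\gl_{r-1}$-crystal morphism between these particular crystals is forced to be a $\glr$-crystal isomorphism. Without (i) the homotopy/operadic argument for the specific path and (ii) this last bootstrapping step (which is unavoidable because the operator $\tilde{e}_{r-1}$ cannot be reached through $\overline{\mathcal{M}}(r,\infty)$), the proof is not complete.
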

\begin{proof}
We will proceed by induction on \( r \), noting that for \( r=2 \) the result is immediate since the weight spaces of \( V_\lambda \) are all one-dimensional.
 
Recall from \ref{sec:GTdeg} the mapping 
that \( \comb_{\underline{0}}\circ \kappa  \) is defined using parallel transport along the path \( \uq(t) = (t^{r-1}q_1,t^{r-2}q_2,\ldots, q_r)\in \Rdel^r \) for \( t \in (0,1] \).  Recall from Section~\ref{GTfamily}, the mapping from \( \Creg^r \) to the space of commutative subalgebras of \( U(\glr) \) extends to a mapping from \( \overline{\mathcal{M}}_{0,r+1} \). For \( x\in \overline{\mathcal{M}}_{0,r+1} \) we denote the corresponding algebra by \( \mathcal{A}(x) \). With this notation, \( \mathcal{A}(\underline{q}) \) is the algebra corresponding to \( x = (q_1, q_2, \ldots , q_r, \infty) \in \overline{\mathcal{M}}_{0,r+1} \). We now consider the path \( \uq(t) \) in this moduli space, and thus include the limit point \(\uq(0) \). 
We will label the marked points in \( \overline{\mathcal{M}}_{0,r+1} \) by \( 1,2,\ldots,r,\infty \). Let \( \overline{\mathcal{M}}(i,j) \) be the codimension one boundary component of \( \overline{\mathcal{M}}_{0,r+1} \) where precisely the marked points \( i, i+1, \ldots , j \) lie in the same irreducible component. 
 We are going to consider a path \( \tilde{\uq}(t) \) in \( \overline{\mathcal{M}}_{0,r+1} \) that is homotopy equivalent to \( \uq(t) \). It is the composition of two paths, \( \tilde{\uq}^1(t) \) and \( \tilde{\uq}^2(t) \), which are defined as follows. 
 \begin{itemize}
     \item
We let \( \tilde{\uq}^1(t) = (tq_1,tq_2,\ldots,tq_{r-1},q_r) \) for \( t\in (0,1] \). The limit point \( \tilde{\uq}^1(0) \) is the stable curve in \( \overline{\mathcal{M}}(r,\infty) \) where one component has marked points at \( q_r \) and \( \infty \) and a node at \( 0 \), while the second component has marked points at \( q_1,q_2,\ldots,q_{r-1} \) and a node at \( \infty \). 
\item We let \( \tilde{\uq}^2(t)\in \overline{\mathcal{M}}(r,\infty) \) be the stable curve where the first component has marked points at \( q_r \) and \( \infty \) and a node at \( 0 \), and the other component(s) has marked points at \( t^{r-2}q_1,t^{r-3}q_2,\ldots,q_{r-1} \) and a node at \( \infty \). 
 \end{itemize}
We will first show that parallel transport along \( \tilde{\uq}^1(t) \) and \( \tilde{\uq}^2(t) \) both induce \( \gl_{r-1} \)-crystal morphisms.
First we deal with \( \tilde{\uq}^2(t) \). Let \( \iota: U(\gl_{r-1}) \longrightarrow U(\glr) \) be the embedding induced by \( \gl_{r-1} \subset \glr \) as the matrices with zero final row and column. By \cite[Corollary~10.12]{HKRW} the algebra, \( \mathcal{A}(\tilde{\uq}^1(0)) \) at the initial point is generated by \( \iota\mathcal{A}((q_1,q_2,\ldots,q_{r-1})) \) and \( \mathcal{A}((0,\ldots,0,q_r)) \subset U(\glr)^{\gl_{r-1}} \). Since the decomposition of  \( V_{\lambda} \) into \( \gl_{r-1} \) representations is multiplicity free, we have  \[  \mathcal{E}_{\tilde{\uq}^1(0)}(\lambda) = \bigsqcup_{\mu} \mathcal{E}_{(q_1,q_2,\ldots,q_{r-1})}(\mu),  \] where the union ranges over \( \mu \subset \lambda \) such that \( \lambda \setminus \mu \) has at most one box in every column, see \cite[Corollary 10.13]{HKRW}. The induction assumption is that the mapping \( \comb^{r-1}_{(q_1, \ldots , q_{r-1})}(\mu) \) is a \( \gl_{r-1} \)-crystal isomorphism for every \( \mu \). These are precisely the maps induced by parallel transport along \( \tilde{\uq}^2(t) \).
    
  Now we consider \( \tilde{\uq}^1(t) \). The crystal operator \( \tilde{e}_i \) in the crystal structure on \( \mathcal{E}_{\underline{q}}(\lambda) \) is defined using parallel transport to any point on \( \overline{\mathcal{M}}(i,i+1) \). When \( 1 \le i \le r-2 \), we have that \( \overline{\mathcal{M}}(i,i+1) \cap \overline{\mathcal{M}}(r,\infty) \) is nonempty. By the operadic nature of \( \overline{\mathcal{M}}_{0,r+1} \), we therefore see that parallel transport along \( \tilde{\uq}^1(t) \) is a morphism of \( \gl_{r-1} \)-crystals since the parallel transport defining \( \tilde{e}_i \) factors through the parallel transport to \( \tilde{\uq}^1(0) \). 
  
  It follows from these two paragraphs that that parallel transport along the composition of \( \tilde{\uq}^1(t) \) and \( \tilde{\uq}^2(t) \) is a \( \gl_{r-1} \)-crystal morphism. Since this composition is homotopic to \( \uq(t) \), this shows \( \comb^r_{\underline{q}}(\lambda) \) is a mapping of \( \gl_{r-1} \)-crystals. This implies that this mapping sends \( \gl_{r-1} \) highest weights to elements of \( \SSYT_{r}(\lambda) \) that are also \( \gl_{r-1} \) highest weights. But such semistandard tableaux are uniquely determined by their weights as a \( \glr \)-representation. Since \( \comb_{\underline{0}}\circ \kappa   \) preserves weight spaces by construction, it follows that it is a \( \glr \)-crystal mapping. 
  \end{proof}

\section{Moduli of inhomogenous Gaudin algebras}
\label{sec:moduli-bethe}
For \( (\uz,\uq) \in \Creg^n \times \Creg^r \), the algebras \( \uBethe(\uz;\uq) \subseteq U(\glr)^{\otimes n} \) have constant Hilbert series with respect to the PBW filtration (see \cite[Section~9.4]{HKRW}). Thus if we fix \( \uq \in \Creg^r \), then \( \Creg^n \) maps into the space of subalgebras of \( U(\glr)^{\otimes n} \) with the same Hilbert series as \( \uBethe(\uz;\uq) \). Denote the closure of the image of \( \Creg^n \) by \( \mathfrak{X}_{\uq} \), the compactified moduli space of inhomogeneous Gaudin algebras. If \( x \in \mathfrak{X}_{\uq} \), we denote the corresponding algebra by \( \Aa(x;\uq) \). The purpose of this section is to present a homotopy of paths in the space of \( \mathfrak{X}_{\uq} \). This will be the main technical ingredient in the proof of Theorem~\ref{thm:main}.
\subsection{}
\label{sec:collisionpathsdefn} We call a path \( \uz (t): \RR_{>0} \longrightarrow \Creg^n \) a \emph{collision path} if it satisfies the following four properties:
\begin{itemize}
\item \emph{ordered}: \( \uz (t) \in \Rdel^n \) for all \( t \in \RR_{>0} \),
\item \emph{monotonicity}: the functions \( z_i(t) \) are monotonic for $1\leq i \leq n$,
\item \emph{limiting behaviour}: \( \lim_{t \to 0} z_i(t) = 0  \) and \( \lim_{t \to \infty} z_i(t) = \infty  \) for all \( 1\le i \le n \), 
\item \emph{asymptotic ordering}: \( \lim_{t \to 0} z_{i+1}(t)/z_{i}(t) = \lim_{t \to \infty} z_{i+1}(t) - z_i(t) = \infty \) for all \( 1\le i < n \).
\end{itemize}
These conditions ensure that
\begin{itemize}
\item \( \lim_{t\to \infty} \uBethe(\uz(t);\uq) = \Aa(\uq)^{\ox n} \) (see Section~\ref{ztoinfty}), 
\item \( \lim_{t\to 0} \uBethe(\uz(t);\uq) \) is the algebra generated by \( \Delta^n\uBethe(\uq) \) and \( \uJM_n \) (see Section~\ref{sec:JMlimit}).
\end{itemize}
\begin{example}
  \label{exm:path-for-monodromy}
An example of a collision path is \( z_i(t) = t^{n-i+1}(1+t^2)^{i-1} \). If we want this to pass through a specific point \( (z_1,z_2,\ldots,z_n) \), at say \( t=1 \) for example, we can take the path \( z_i(t) = 2^{1-i}z_it^{n-i+1}(1+t^2)^{i-1} \).
\end{example}
Fix a choice of collision path. Let \( \RR_{\ge 0}^\infty =\RR_{\ge 0} \cup \{\infty\} \). We can now define a path \( \gamma:\RR_{\ge 0}^\infty \longrightarrow \mathfrak{X}_{\uq} \) given by \( \gamma(t)=\uBethe(\uz(t);\uq) \) when \( t \neq 0,\infty \), and given by the limiting algebras at the start and end points. This is the path whose parallel transport will be studied in Theorem~\ref{thm:HKRW-to-RSK}.
\subsection{Remark}
  \label{rem:framed-stable-curves}
  When \( \underline{q} \) is regular, the moduli space \( \mathfrak{X}_{\uq} \) is expected to be isomorphic to the moduli space of framed, stable, genus zero curves with \( n+1 \) marked points (see~\cite[Remark~10.20]{HKRW}). These are chains of genus zero curves, with points marked by \( 1,2,\ldots,n \) and \( \infty \), and a choice of nonzero tangent vector at \( \infty \).
  Suppose that \( \mathfrak{X}_{\uq} \) is indeed isomorphic to this moduli space, then the path \( \gamma \) corresponds to a path starting at the curve \( \gamma(0) \) with \( n-1 \) components arranged linearly, the labels \( 1,2 \) on the first component, \( i+1 \) on the \( i^{\text{th}} \) component, and \( \infty \) on the final component. For \( 0 < t < \infty \), the curve \( \gamma(t) \) is a single component with labels at the pointed determined by \( z(t) \). For \( t=\infty \), the curve \( \gamma(\infty) \) is again \( n \) components, this time, all sharing a single nodal point marked \( \infty \). 
  Now we would like to apply induction on \( n \) to this path later. The moduli space of curves interpretation suggests how to do this. We use a different path \( \gamma_0 \). We start at the same stable curve \( \gamma_0(0)=\gamma(0) \), but now move only the point marked by \( n \). That is we move through stable curves \( \gamma_0(t) \) with \( n-1 \) components, arranged linearly, the first marked by \( 1,2 \) and the \( i^{\text{th}} \) marked by \( i+1 \), but now the last component marked by \( n \) and \( \infty \).
  We ask that the \( n^{\text{th}} \) marked point collides with \( \infty \) when \( t=1 \), that means, \( \gamma_0(1) \) is the same stable curve as \( \gamma_0(0) \) except that \( \infty \) labels the node between the final two components, and \( n \) labels a point on the final component. For \( 1<t<\infty \) we then inductively use a collision path to move the marked points \( 1,2,\ldots,n-1 \) to \( \infty \) to arrive at \( \gamma_0(\infty)=\gamma(\infty) \).
  In the moduli space of curves, it is clear these two paths are homotopy equivalent. However since we do not know that \( \mathfrak{X}_{\uq} \) is indeed isomorphic to this space, we cannot use this fact. The following proves that the corresponding paths in \( \mathfrak{X}_{\uq} \) are homotopy equivalent directly. 
\subsection{} The idea of a collision path is that it parametrises \( n \) particles that collide at zero and infinity in a specified order, simultaneously. Our aim now, is to present a new path in \( \mathfrak{X}_{\uq} \), where the \( n^{\text{th}} \) particle is sent from zero to infinity first, and then the remaining particles follow. This will allow us to use induction when calculating the parallel transport along a collision path. We show below that this new path is homotopy equivalent to \( \gamma \). So the strategy can be summarised as writing down what would be an explicit homotopy in the moduli space of curves and showing explicitly that it is in fact a homotopy in \( \mathfrak{X}_{\uq} \).
                     
\subsection{}
Define a deformation \( \uz:(0,1] \times \RR_{>0} \longrightarrow \Creg^n \) of our fixed collision path. For \( 1 \le i \le n-1 \) let
\begin{align*}
  z_i(s,t) &=
  \begin{cases}
    z_{i}(st) &\text{when } 0 \le t \le 1, \\
    z_{i}(t-1+s) &\text{when } t > 1,
  \end{cases}
\intertext{and let}                  
z_n(s,t) &=
  \begin{cases}
    \frac{2-t}{1-t+s}z_{n}(t) &\text{when } 0 \le t \le 1, \\
    s^{-1}z_{n}(t) &\text{when } t > 1,
  \end{cases}
\end{align*}
\begin{lemma}
  \label{lem:collision-path-deformation}
  Fix \( s \in (0,1] \). Then \( (z_i(s,\cdot))_i \) is a collision path.
\end{lemma}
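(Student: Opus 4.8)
The plan is to check the four conditions of Section~\ref{sec:collisionpathsdefn} directly for the map \( t\mapsto(z_1(s,t),\ldots,z_n(s,t)) \), using that the fixed collision path \( (z_i)_i \) already satisfies them together with the single hypothesis \( s\le 1 \), which makes all the reparametrisations and prefactors appearing in the definition shrink the argument and inflate the value in a controlled way. Two preliminary remarks streamline everything. First, each \( z_i(s,\cdot) \) is continuous at the gluing point \( t=1 \): for \( i\le n-1 \) both branches give \( z_i(s) \), and for \( i=n \) both give \( s^{-1}z_n(1) \). Second, writing \( g_s(t)=\frac{2-t}{1-t+s} \) so that \( z_n(s,t)=g_s(t)z_n(t) \) on \( (0,1] \) and \( z_n(s,t)=s^{-1}z_n(t) \) on \( (1,\infty) \), one computes \( g_s'(t)=\frac{1-s}{(1-t+s)^2}\ge 0 \) and \( g_s(t)\ge 1 \) on \( (0,1] \) (both equivalent to \( s\le 1 \)), while \( g_s>0 \) there; so \( g_s \) is positive, non-decreasing and \( \ge 1 \) on \( (0,1] \).

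For \emph{monotonicity} and the \emph{limiting behaviour}: when \( i\le n-1 \), \( z_i(s,\cdot) \) is \( z_i \) precomposed with the increasing affine maps \( t\mapsto st \) on \( (0,1] \) and \( t\mapsto t-1+s \) on \( (1,\infty) \), hence monotone (using continuity at \( t=1 \)), and since \( st\to0 \), \( t-1+s\to\infty \) we get \( z_i(s,t)\to0 \) as \( t\to0 \) and \( \to\infty \) as \( t\to\infty \). For the \( n^{\text{th}} \) coordinate, on \( (1,\infty) \) it is the positive rescaling \( s^{-1}z_n \), and on \( (0,1] \) it is the product \( g_sz_n \) of two positive non-decreasing functions, hence non-decreasing; the limits follow from \( g_s(t)z_n(t)\to\frac{2}{1+s}\cdot0=0 \) and \( s^{-1}z_n(t)\to\infty \).

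For the \emph{ordering} \( \underline{z}(s,t)\in\Rdel^n \): if \( 1\le i<j\le n-1 \) then on each branch \( z_i(s,t) \) and \( z_j(s,t) \) are \( z_i,z_j \) evaluated at the same positive argument, so \( z_i(s,t)<z_j(s,t) \) since \( (z_i)_i \) is ordered. For the pair \( (n-1,n) \) one combines two facts: the arguments \( st \) and \( t-1+s \) at which \( z_{n-1}(s,\cdot) \) is evaluated are \( \le t \) (as \( s\le1 \)), so \( z_{n-1}(s,t)\le z_{n-1}(t)<z_n(t) \); and the prefactor multiplying \( z_n(t) \) in \( z_n(s,t) \) is \( g_s(t)\ge1 \) on \( (0,1] \) and \( s^{-1}\ge1 \) on \( (1,\infty) \), so \( z_n(s,t)\ge z_n(t) \). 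Chaining these gives \( z_{n-1}(s,t)<z_n(s,t) \) for all \( t>0 \). For the \emph{asymptotic ordering}, as \( t\to0 \) (on \( (0,1] \)): for \( i<n-1 \), \( z_{i+1}(s,t)/z_i(s,t)=z_{i+1}(st)/z_i(st)\to\infty \); for \( i=n-1 \), \( z_n(s,t)/z_{n-1}(s,t)=g_s(t)z_n(t)/z_{n-1}(st)\ge g_s(t)z_n(t)/z_{n-1}(t)\to\frac{2}{1+s}\cdot\infty=\infty \), using \( z_{n-1}(st)\le z_{n-1}(t) \). As \( t\to\infty \) (on \( (1,\infty) \)): for \( i<n-1 \), \( z_{i+1}(s,t)-z_i(s,t)=z_{i+1}(t-1+s)-z_i(t-1+s)\to\infty \); for \( i=n-1 \), \( z_n(s,t)-z_{n-1}(s,t)=s^{-1}z_n(t)-z_{n-1}(t-1+s)\ge z_n(t)-z_{n-1}(t)\to\infty \), using \( s^{-1}\ge1 \), \( z_n(t)>0 \) and \( z_{n-1}(t-1+s)\le z_{n-1}(t) \).

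I do not expect a genuinely hard step here: the argument is bookkeeping, and the only points demanding care are the continuity at \( t=1 \) and the analysis of the \( n^{\text{th}} \) coordinate on \( (0,1] \), where one must verify that the rational prefactor \( g_s \) is non-decreasing and bounded below by \( 1 \) — both consequences of \( s\le1 \) — so that the monotonicity, ordering and asymptotic-ordering properties of the original collision path transfer to the deformation.
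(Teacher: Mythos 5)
Your proof is correct and takes essentially the same route as the paper's: continuity at the gluing point \( t=1 \), the chain \( z_{n-1}(s,t)\le z_{n-1}(t)<z_n(t)\le z_n(s,t) \) (via the prefactor being \( \ge 1 \) when \( s\le 1 \)) for the ordering, monotonicity from the increasing reparametrisations, and the remaining properties transferred from the original collision path. The only difference is that you spell out the limiting-behaviour and asymptotic-ordering verifications, which the paper simply asserts follow from the corresponding properties of \( \uz(t) \).
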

\begin{proof}
  We first observe that the path is continuous. Now we check the four properties defining collision paths.
  
  To see the ordering property, clearly \( z_i(s,t) < z_{i+1}(s,t) \) for any fixed \( s \) and \( t \) and \( 1 \le i \le n-2 \). What is left to observe is that \( z_{n-1}(s,t) < z_{n}(s,t) \). When \( t \le 1 \), we have
  \[ z_{n-1}(s,t) = z_{n-1}(st) \le z_{n-1}(t) < z_n(t) \le \frac{2-t}{1-t+s}z_{n}(t) = z_n(s,t) \]
  where the first inequality follows by monotonicity, the second by the ordering, and the third since \( \frac{2-t}{1-t+s} \geq 1 \) when \( s \in (0,1] \) and \( t\le 1 \). Now when \( t > 1 \), we have
  \[ z_{n-1}(s,t) = z_{n-1}(t-1+s) \le z_{n-1}(t) < z_n(t) \le s^{-1}z_{n}(t) = z_n(s,t) \]
  for similar reasons.
  Monotonicity follows from the observation that if \( t < 1 < t' \) then \( st < t'-1 +s \). The limiting behaviour and asymptotic ordering both follow from the corresponding properties of \( \uz(t) \).
\end{proof}
\subsection{} Now we can define the path \( \gamma_0:\RR_{\ge 0}^\infty \longrightarrow \mathfrak{X}_{\uq} \) given by \( \gamma_0(t) = \lim_{s\to 0} \uBethe_q(z(s,t)) \) when \( t \neq 0,\infty \) and defined by the appropriate limiting algebras at these endpoints. While it is immediately clear that \( \uz(s,t) \) is a homotopic to \( \uz(t) \) for any fixed \( s \in (0,1] \), the same cannot be said for \( \gamma(t) \) and \( \gamma_0(t) \). To see this, we must show that \( \gamma_0(t) \) is continuous and to do this we will explicitly calculate the algebras corresponding to each point. 
\begin{lemma}
  \label{lem:limit-bethe-algebras}
  The algebra \( \lim_{s\to 0} \uBethe(\uz(s,t);\uq) \)
  \begin{enumerate}
  \item is equal to \( \uBethe(z_1(t-1),\ldots,z_{n-1}(t-1);\uq)\otimes \uBethe(\uq) \) when \( t > 1 \), and
  \item is generated by \( \uJM_{n-1} \) and \( \uBethe\left( 0,\frac{2-t}{1-t}z_n(t);\uq \right)^{(1,2,\ldots,n-1)(n)} \) when \( 0 < t < 1 \), and
  \item is generated by \( \uJM_{n-1}\otimes \Aa(\uq) \) and \( \Delta^{n-1}\Aa(\uq) \) when \( t=1 \).
  \end{enumerate}
\end{lemma}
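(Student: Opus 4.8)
The strategy is to compute the limit \( \lim_{s\to 0}\uBethe(\uz(s,t);\uq) \) case by case, reducing in each regime to a limit that has already been identified in the excerpt (Sections~\ref{ztoinfty}, \ref{sec:JMlimit}, and Lemma~\ref{lem:equal-params-gaudin-algebra}, together with Proposition~\ref{prp:equal-params-gaudin-algebra}). The key observation is that, in each of the three regimes, the deformation parameter \( s \) only acts on the last coordinate \( z_n(s,t) \) and on the overall scale of the first \( n-1 \) coordinates, so the degenerations are of the two types already studied: a ``separating'' degeneration where one particle runs off to infinity relative to the others (giving a tensor product decomposition via Proposition~\ref{prp:equal-params-gaudin-algebra} and Section~\ref{ztoinfty}), and a ``clustering'' degeneration where particles collide at \( 0 \) with prescribed asymptotic ordering (giving the Jucys--Murphy algebra \( \uJM \) via Proposition~\ref{prp:JMlimit}).

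\emph{Case \( t>1 \).} Here \( z_i(s,t)=z_i(t-1+s) \) for \( i\le n-1 \), which stays bounded and bounded away from each other as \( s\to 0 \), while \( z_n(s,t)=s^{-1}z_n(t)\to\infty \). So the first \( n-1 \) particles are fixed at \( (z_1(t-1),\ldots,z_{n-1}(t-1)) \) and the \( n^{\text{th}} \) escapes to infinity. I would apply Proposition~\ref{prp:ztoinfty} in the reparametrised variable (thinking of \( s^{-1} \) as the large parameter, with the other coordinates essentially constant) to conclude that the limit factors as \( \uBethe(z_1(t-1),\ldots,z_{n-1}(t-1);\uq)\otimes\Aa(\uq) \), using that \( \Aa(\uq) \) is precisely the \( n=1 \) inhomogeneous Gaudin algebra. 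Strictly, one should phrase this as: the limiting algebra is \( \lim \uBethe(\uz';\uq) \) where the spread between \( z_{n-1} \) and \( z_n \) tends to infinity while the rest is fixed, and cite Proposition~\ref{prp:ztoinfty} applied to the single escaping coordinate.

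\emph{Case \( 0<t<1 \).} Now \( z_i(s,t)=z_i(st) \) for \( i\le n-1 \), so these all shrink to \( 0 \) as \( s\to 0 \), while \( z_n(s,t)=\frac{2-t}{1-t+s}z_n(t)\to\frac{2-t}{1-t}z_n(t) \), a nonzero constant. So \( n-1 \) particles collide at \( 0 \) in the asymptotic order inherited from the collision path, and the \( n^{\text{th}} \) stays put. I would first use Proposition~\ref{prp:equal-params-gaudin-algebra} to split off the \( n^{\text{th}} \) leg: as \( z_1,\ldots,z_{n-1}\to 0 \) the algebra degenerates towards \( \uBethe(0,\tfrac{2-t}{1-t}z_n(t);\uq)^{(1,2,\ldots,n-1)(n)} \) enlarged by the Jucys--Murphy operators governing the collision of the first \( n-1 \) particles — this is exactly the situation of Proposition~\ref{prp:JMlimit}/Section~\ref{sec:JMlimit} applied to the first \( n-1 \) tensor legs, and the ``very similar argument'' alluded to after Proposition~\ref{prp:JMlimit}. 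So the limit is generated by \( \uJM_{n-1} \) and \( \uBethe(0,\tfrac{2-t}{1-t}z_n(t);\uq)^{(1,2,\ldots,n-1)(n)} \). The only subtlety is that two degenerations (clustering of the first \( n-1 \) at \( 0 \), and the separation of those from the \( n^{\text{th}} \), which is automatic since \( z_n \) is bounded away from \( 0 \)) happen together, but since the \( n^{\text{th}} \) coordinate's limit is nonzero and fixed, the separation is ``already achieved'' throughout and only the clustering limit must be taken — so Proposition~\ref{prp:JMlimit} applies to the first \( n-1 \) legs verbatim.

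\emph{Case \( t=1 \).} Here \( z_i(s,1)=z_i(s) \) for \( i\le n-1 \), which tend to \( 0 \) with the collision-path asymptotic ordering, while \( z_n(s,1)=\frac{1}{s}z_n(1)\to\infty \). So simultaneously the first \( n-1 \) particles cluster at \( 0 \) and the \( n^{\text{th}} \) escapes to \( \infty \). Combining the analysis of Section~\ref{ztoinfty} on the escaping \( n^{\text{th}} \) coordinate (which produces the tensor factor \( \Aa(\uq) \) in the last slot) with Proposition~\ref{prp:JMlimit} on the clustering first \( n-1 \) coordinates (which produces \( \uJM_{n-1}\otimes\Aa(\uq) \) and \( \Delta^{n-1}\Aa(\uq) \) inside the first \( n-1 \) slots), the limiting algebra is generated by \( \uJM_{n-1}\otimes\Aa(\uq) \) and \( \Delta^{n-1}\Aa(\uq) \) — the extra \( \otimes\Aa(\uq) \) in the last tensorand being carried along. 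One can also obtain this as a common specialisation of cases (1) and (2) at \( t\to 1 \), which is a useful consistency check and essentially what continuity of \( \gamma_0 \) at \( t=1 \) requires.

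\textbf{Main obstacle.} The delicate point throughout is the handling of \emph{iterated} limits: the deformation \( \uz(s,t) \) sends some coordinates to \( 0 \), some to \( \infty \), and the relevant limit results (Proposition~\ref{prp:ztoinfty}, Proposition~\ref{prp:JMlimit}) are each stated for a single type of degeneration. The work is to check that, for each fixed \( t \), the single parameter \( s\to 0 \) realises a path in \( \Creg^n \) along which the hypotheses of the appropriate proposition are met for the sub-collection of coordinates that move, while the other coordinates sit in a fixed position that is compatible with (indeed, already at) the boundary stratum in question. Concretely one must verify the asymptotic-ordering inequalities (of the type \( z_i/z_{i+1}\to 0 \) at \( 0 \), or \( z_{i+1}-z_i\to\infty \) at \( \infty \)) hold along \( s\to 0 \), which follows from the corresponding properties of the fixed collision path \( \uz(t) \) exactly as in the proof of Lemma~\ref{lem:collision-path-deformation}, and then invoke the cited propositions leg-by-leg, taking care that the generation statements (``generated by \ldots'') compose correctly across the tensor factors via the embedding conventions of Section~\ref{defn:iga}.
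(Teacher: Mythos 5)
Your overall picture of the three regimes (which particles escape, which cluster) matches the paper's, but as written the argument has two genuine gaps. First, the propositions you invoke are not stated in the generality you need, and "applying them leg-by-leg" is not available: Proposition~\ref{prp:ztoinfty} assumes \emph{all} consecutive gaps diverge and concludes the full factorisation \( \Aa(\uq)^{\ox n} \), so it says nothing about a single escaping coordinate with the others held fixed; likewise Proposition~\ref{prp:JMlimit} concerns the collision of all \( n \) points of a standalone Gaudin algebra, whereas in the regime \( 0<t<1 \) the algebra \( \uBethe(\uz(s,t);\uq) \) mixes all \( n \) legs and does not split off the first \( n-1 \) legs before the limit is taken, so there is nothing to which the proposition applies "verbatim". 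The paper instead works directly with the generators: it writes \( \uBethe(\uz;\uq) \) as generated by the principal parts of the Laurent series of \( f_l(w;\uz;\uq)=\phi_w(\uz;\uq)(S_l) \), uses \( \lim_{w\to\infty}\phi_w=\varepsilon \) and a shift \( w\mapsto w+s^{-1}z_n(t) \) to extract the factor carried by the escaping particle, and — crucially for producing \( \uJM_{n-1} \) — uses the rescaling invariance \( \uBethe(\uz;\uq)=\uBethe(z_{n-1}(st)^{-1}\uz;z_{n-1}(st)\uq) \) of \cite[Lemma~9.2]{HKRW}; note that this rescaling sends the inhomogeneity to \( z_{n-1}(st)\uq\to\underline{0} \), which is exactly why the Jucys--Murphy algebra (defined as a limit at \( \uq=\underline{0} \)) appears. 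Your proposal contains no mechanism for extracting these subleading terms.

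Second, you only address the lower bound. Even granting that \( \uJM_{n-1} \) and the collapsed Gaudin algebra (resp.\ the two tensor factors in case (1)) lie in the limit, the statement is that the limit is \emph{equal to}, or \emph{generated by}, these subalgebras, and nothing in your argument rules out the limit being strictly larger. The paper closes this gap by an upper-bound argument: in case (1) by comparing transcendence degrees of free polynomial algebras via \cite[Proposition~9.10]{HKRW}, and in case (2) by first bounding the limit inside the centraliser of \( \Delta^{(1\ldots n-1)(n)}(U(\glr)^{\ox 2})^{\glr\oplus\glr} \) using Knop's theorem, and then matching the number and degrees of algebraically independent generators (the Hilbert series is preserved in the limit). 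Some replacement for this counting step is indispensable; "the generation statements compose correctly across the tensor factors" is precisely the point that needs proof.
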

\begin{proof}
  We imitate the method of proof used in~\cite[Section~5]{ryb_shift} and~\cite[Section~10]{HKRW}. In particular, the algebra \( \uBethe(\uz;\uq) \) is generated by the coefficients of the principal parts of the Laurent series of \( f_l(w;\uz;\uq) = \phi_w(\uz;\uq)(S_l) \) where \( S_l \in \uBethe \) are the free generators described in~\cite{Feigin:1994tc}. We also note that \( \lim_{w\to\infty} \phi_w =\varepsilon \), the counit map (see~\cite[Lemma~2]{ryb_shift}).
  First we deal with \( t > 1 \). The generators of \( \uBethe(\uz(s,t);\uq) \) are the coefficients of the principal part of the Laurent series of
  \begin{align*}
    f_l(w;\uz(s,t);\uq) &= f_l(w;z_1(t-1+s),z_2(t-1+s),\ldots,z_{n-1}(t-1+s),s^{-1}z_n(t);\uq),
  \end{align*}
  so we have
  \begin{align*}
    \lim_{s \to 0} f_l(w;\uz(s,t);\uq) &= \lim_{s \to 0} \phi_{w-z_1(t-1+s)} \ox \cdots \ox \phi_{w-z_{n-1}(t-1+s)} \ox \phi_{w-s^{-1}z_n(t)} \ox (\chi_{\uq} \circ \phi_\infty) \circ \Delta^{(n+1)}(S_l) \\
                                     &= \phi_{w-z_1(t-1)} \ox \cdots \ox \phi_{w-z_{n-1}(t-1)} \ox \varepsilon \ox (\chi_{\uq} \circ \phi_\infty) \circ \Delta^{(n+1)}(S_l) \\
    &= f_l(w;z_1(t-1),\ldots,z_{n-1}(t-1);\uq)^{(1,2,\ldots,n-1)}. 
  \end{align*}
  Thus \( \uBethe(z_1(t-1),\ldots,z_{n-1}(t-1);\uq)^{(1,2,\ldots,n-1)} \subseteq \lim_{s\to0}\uBethe(\uz(s,t);\uq) \). Now consider the principal part of the Laurent series of \( f_l(w;\uz(s,t);\uq) \) at \( w=z_n(s,t)=s^{-1}z_n(t) \). This is the same as the principal part of the Laurent series of \( f_l(w+s^{-1}z_n(t);\uz(s,t);\uq) \) at \( w=0 \) and we have
  \begin{align*}
    \lim_{s\to 0} f_l(w+ & s^{-1}z_n(t);\uz(s,t);\uq) \\ &= \phi_{w+s^{-1}z_n(t)-z_1(t-1+s)} \ox \cdots \ox \phi_{w+s^{-1}z_n(t)-z_{n-1}(t-1+s)} \ox \phi_{w} \ox (\chi_{\uq} \circ \phi_\infty) \circ \Delta^{(n+1)}(S_l) \\
                                                 &= \varepsilon^{\ox n-1} \ox \phi_w \ox (\chi_{\uq} \circ \phi_\infty) \circ \Delta^{(n+1)}(S_l) \\
    &= f_l(w;0;{\uq})^{(n)}
  \end{align*}
  and so \( \uBethe(\uq)^{(n)} \subseteq \lim_{s\to0}\uBethe(\uz(s,t);\uq) \).
  To see that \( \uBethe(z_1(t-1),\ldots,z_{n-1}(t-1);\uq)^{(1,2,\ldots,n-1)}\ox \uBethe_q = \lim_{s\to0}\uBethe(\uz(s,t);\uq) \), we use~\cite[Proposition~9.10]{HKRW} to see that both are free polynomial algebras of the same transcendence degree.
  Now consider the case when \( 0 < t \le 1 \). First of all, by taking the limit and using Proposition~\ref{prp:equal-params-gaudin-algebra}, we see that
  \[ \uBethe\left( 0,\frac{2-t}{1-t}z_n(t);\uq \right)^{(1,2,\ldots,n-1)(n)} = \uBethe\left(0,0,\ldots,0,\frac{2-t}{1-t}z_n(t);\uq \right) \subseteq \lim_{s \to 0} \uBethe(\uz(s,t);\uq). \]
  According to \cite[Lemma~9.2]{HKRW}, \( \uBethe(\uz(s,t);\uq) = \uBethe(z_{n-1}(st)^{-1}\uz(s,t);z_{n-1}(st)\uq) \) and so the limiting algebra contains the coefficients of the principal parts of the Laurent series of
  \begin{align*}
    \lim_{s\to0} f_l(&w;z_{n-1}(st)^{-1}\uz(s,t);z_{n-1}(st)\uq)
    = \lim_{s\to\infty} f_l\left( w; \frac{z_1(st)}{z_{n-1}(st)}, \ldots, \frac{z_{n-2}(st)}{z_{n-1}(st)},1,\frac{z_n(st)}{z_{n-1}(st)}; z_{n-1}(st)\uq \right) \\
    &= \lim_{s\to\infty} \phi_{w-\frac{z_1(st)}{z_{n-1}(st)}} \otimes \cdots \otimes \phi_{w-\frac{z_{n-2}(st)}{z_{n-1}(st)}} \ox \phi_{w-1} \otimes \phi_{w-\frac{z_n(st)}{z_{n-1}(st)}} \ox (\chi_{z_{n-1}(st)\uq} \circ \phi_{\infty}) \circ \Delta^{n+1}(S_l).
  \end{align*}
  By the fact that \( \uz(t) \) is a collision path,
  \[ \lim_{s\to0} \frac{z_i(st)}{z_{n-1}(st)} = 0 \text{ for } 1 \le i < n-1 \text{ and } \lim_{s\to0} \frac{z_n(st)}{z_{n-1}(st)} = \infty \]
  So \( \lim_{s\to0} \phi_{w-\frac{z_n(st)}{z_{n-1}(st)}} = \varepsilon \). By the cocommutativity of \( \Delta \) we get,
  \[ \lim_{s\to0} f_l(w;z_{n-1}(st)^{-1}\uz(s,t);z_{n-1}(st)\uq) = \lim_{s\to0} f_l(w;z_1(s,t),\ldots,z_{n-1}(s,t);0)^{(1,2,\ldots,n-1)} \]
  and we thus have that \( \uJM_{n-1} \subseteq \lim_{s\to0} \uBethe(\uz(s,t);\uq) \).
  To see that the limiting algebra is generated by these two subalgebras note first by a theorem of Knop~\cite{Knop:1994ei} the centraliser of \( (U(\glr)^{\ox 2})^{\glr\oplus \glr} \) embedded via \( \Delta^{(1\ldots n-1)(n)} \) in \( U(\glr^{\oplus n}) = U(\glr)^{\ox n} \) is isomorphic to
  \[ (U(\glr)^{\ox n})^{\Delta^{(1\ldots n-1)(n)}(\glr\oplus\glr)} \ox_{(U(\glr)^{\ox 2})^{\glr\oplus \glr}} \Delta^{(1\ldots n-1)(n)}(U(\glr)^{\ox 2}). \]
  The algebra \( \uJM_{n-1} \) is contained in the first tensor factor and \( \uBethe\left( 0,\frac{2-t}{1-t}z_n(t);\uq \right)^{(1,2,\ldots,n-1)(n)} \) is contained in the second. Thus
  \[ \lim_{s \to 0} \uBethe(\uz(s,t);\uq) \subseteq \uJM_{n-1} \ox_{(U(\glr)^{\ox 2})^{\glr\oplus \glr}} \uBethe\left( 0,\frac{2-t}{1-t}z_n(t);\uq \right)^{(1,2,\ldots,n-1)(n)}. \]
  The tensor factors on the right are free polynomial algebras so to check equality we will show that the right hand side has the same number of algebraically independent generators of the same degrees as \( \uBethe(\uz(s,t);\uq) \). This can be seen by~\cite[Section~9.4 and Proposition~9.10]{HKRW} which implies there is a degree preserving bijection between a set of algebraically independent generators of \( \uJM_{n-1} \) and the principal parts of the Laurent expansions of \( f_l(w;\uz(s,t);q) \) at \( w=z_1,z_2,\ldots,z_{n-1} \), and similarly between generators of \( \uBethe\left( 0,\frac{2-t}{1-t}z_n(t);\uq \right)^{(1,2,\ldots,n-1)(n)} \) and the principal parts of the Laurent expansions of \( f_l(w;\uz(s,t);q) \) at \( w=\infty \). Together, these principal parts form a set of algebraically independent generators of \( \uBethe(\uz(s,t);\uq) \). Since the limit preserves the number and degree of generators (i.e. the Hilbert polynomial) this shows that \( \lim_{s \to 0}\uBethe(\uz(s,t);\uq) \) is generated by the desired algebras.
  Finally, we consider the case \( t=1 \). We have \( \uz(s,1) = (z_1(s),z_2(s),\ldots,z_{n-1}(s),s^{-1}z_n(1)) \) and so the same analysis of the principle parts of \( f_l(w;z_{n-1}(s)^{-1}\uz(s,1);\uq) \) as above will show that \( \uJM_{n-1} \subset \lim_{s\to 0} \Aa(\uz(s,1);\uq) \). Similarly, considering the principle parts of \( f_l(w+s^{-1}z_n(1); \uz(s,1);\uq) \) at \( w=0 \) will show that \( (\Aa(\uq)^{\otimes 2})^{(1,2,\ldots,n-1)(n)} \subset \lim_{s\to 0} \Aa(\uz(s,1);\uq) \). The same analysis of generators shows that the Hilbert series agree. 
\end{proof}
\subsection{} By Proposition \ref{prp:JMlimit} and by \cite[Theorem 2]{ryb_shift}, Lemma~\ref{lem:limit-bethe-algebras} implies that \( \lim_{t\to 1^{-}}\gamma_0(t) = \gamma_0(1) = \lim_{t\to1^{+}}\gamma_0(t) \), which proves the following Proposition.
\begin{proposition}
  \label{prp:gamma-continuous}
  The path \( \gamma_0 \) is continuous and thus homotopy equivalent to \( \gamma \) in \( \mathfrak{X}_{\uq} \).
\end{proposition}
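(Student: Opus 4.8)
The plan is to reduce the statement to the single assertion that $\gamma_0$ is continuous at $t=1$, since continuity is immediate everywhere else, and then to promote the deformation $\uz(s,\cdot)$ of Lemma~\ref{lem:collision-path-deformation} to an explicit homotopy. For $t>1$, Lemma~\ref{lem:limit-bethe-algebras}(1) identifies $\gamma_0(t)$ with $\uBethe(z_1(t-1),\ldots,z_{n-1}(t-1);\uq)\otimes\Aa(\uq)$; this varies continuously in $\mathfrak{X}_{\uq}$ because $(z_1(t-1),\ldots,z_{n-1}(t-1))$ moves continuously in $\Creg^{n-1}$ and $\uBethe(-;\uq)$ depends continuously on its parameters, and as $t\to\infty$ the asymptotic ordering of the collision path together with Proposition~\ref{prp:ztoinfty} gives $\gamma_0(t)\to\Aa(\uq)^{\otimes n}=\gamma(\infty)$. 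For $0<t<1$, Lemma~\ref{lem:limit-bethe-algebras}(2) identifies $\gamma_0(t)$ with the algebra generated by the fixed algebra $\uJM_{n-1}$ and $\uBethe\big(0,\tfrac{2-t}{1-t}z_n(t);\uq\big)^{(1,2,\ldots,n-1)(n)}$, which again depends continuously on $t$, with $t\to 0$ limit $\gamma(0)$ as for any collision path (Proposition~\ref{prp:JMlimit}). So the only point at issue is continuity at $t=1$, where by Lemma~\ref{lem:limit-bethe-algebras}(3) the value $\gamma_0(1)$ is the algebra generated by $\uJM_{n-1}\otimes\Aa(\uq)$ and $\Delta^{n-1}\Aa(\uq)$.

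Next I would match the two one-sided limits at $t=1$ with this algebra. From the right, $\gamma_0(t)=\uBethe(z_1(t-1),\ldots,z_{n-1}(t-1);\uq)\otimes\Aa(\uq)$ and, as $t\to1^+$, the $z_i(t-1)$ all tend to $0$ with $z_i(t-1)/z_{i+1}(t-1)\to0$ (the $t\to0$ asymptotics of the collision path $\uz$); by Proposition~\ref{prp:JMlimit} the first tensor factor converges to the algebra generated by $\Delta^{n-1}\Aa(\uq)$ and $\uJM_{n-1}$, and tensoring with $\Aa(\uq)$ yields exactly $\gamma_0(1)$. From the left, $\tfrac{2-t}{1-t}\to\infty$ while $z_n(t)\to z_n(1)\in\RR_{>0}$, so the separation $\tfrac{2-t}{1-t}z_n(t)\to\infty$ and \cite[Theorem~2]{ryb_shift} gives $\uBethe\big(0,\tfrac{2-t}{1-t}z_n(t);\uq\big)\to\Aa(\uq)^{\otimes2}$; applying $\Delta^{(1,2,\ldots,n-1)(n)}$ this limit becomes $\Delta^{n-1}\Aa(\uq)\otimes\Aa(\uq)$, so $\lim_{t\to1^-}\gamma_0(t)$ is generated by $\uJM_{n-1}$, $\Delta^{n-1}\Aa(\uq)$ and the copy of $\Aa(\uq)$ in the last tensorand, which is again $\gamma_0(1)$. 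This is exactly the chain of equalities recorded just before the Proposition, so $\gamma_0$ is continuous on all of $\RR_{\ge 0}^\infty$.

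For the homotopy equivalence I would argue as follows. By Lemma~\ref{lem:collision-path-deformation}, for every $s\in(0,1]$ the tuple $\uz(s,\cdot)$ is a collision path, hence defines a path $\gamma_s$ in $\mathfrak{X}_{\uq}$ with $\gamma_1=\gamma$ (since $\uz(1,\cdot)=\uz(\cdot)$) and with endpoints $\gamma_s(0)=\gamma(0)$ and $\gamma_s(\infty)=\gamma(\infty)$ independent of $s$. The assignment $(s,t)\mapsto\gamma_s(t)$ is continuous on $(0,1]\times\RR_{\ge 0}^\infty$ because $\uz(s,t)$ is jointly continuous and $\uBethe(-;\uq)$ is continuous in its parameters, the limits at $t=0,\infty$ being uniform for $s$ in compact subsets of $(0,1]$. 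Combined with the continuity of $\gamma_0=\lim_{s\to0}\gamma_s$ established above, the map extends continuously to $[0,1]\times\RR_{\ge 0}^\infty$ and is the required homotopy from $\gamma$ to $\gamma_0$ in $\mathfrak{X}_{\uq}$ fixing both endpoints.

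The one genuinely non-formal step is the continuity at $t=1$, and within it the left-hand limit: one has to recognise that the two marked points of $\uBethe\big(0,\tfrac{2-t}{1-t}z_n(t);\uq\big)$ separate to infinite relative distance, so that Rybnikov's degeneration \cite[Theorem~2]{ryb_shift} applies, and then track correctly how the embedding $\Delta^{(1,2,\ldots,n-1)(n)}$ turns $\Aa(\uq)^{\otimes2}$ into $\Delta^{n-1}\Aa(\uq)\otimes\Aa(\uq)$; the right-hand limit is the more routine collapse to the Jucys--Murphy-type algebra of Proposition~\ref{prp:JMlimit}. All of these computations are, however, already contained in Lemma~\ref{lem:limit-bethe-algebras}, so the proof of the Proposition proper is short.
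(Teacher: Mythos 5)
Your proof is correct and follows the paper's own route: the paper likewise deduces continuity at \(t=1\) by matching the two one-sided limits of the explicit algebras from Lemma~\ref{lem:limit-bethe-algebras} with \(\gamma_0(1)\), using Proposition~\ref{prp:JMlimit} for the limit from the right and \cite[Theorem~2]{ryb_shift} for the limit from the left. You merely spell out these steps (and the homotopy coming from the family \(\uz(s,t)\)) in more detail than the paper's one-sentence argument.
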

\section{RSK from Inhomogeneous Gaudin Algebras}
\label{sec:RSK-from-bethe}
In this section we produce the RSK correspondence using parallel transport along collision paths. We freely use the notation from earlier sections of the paper.
\subsection{The case $n=2$} For the time being, let \( n=2 \). Let \( \uz(t) \) be a collision path and let \( \gamma(t) \in \mathfrak{X}_{\uq} \) be the associated path.  As noted in \ref{sec:collisionpathsdefn}, the algebra at \( \gamma(\infty) \) is \( \uBethe(\uq)^{\ox 2} \) and at $\gamma(0)$ is generated by \( \uJM_2 \) and \( \Delta\uBethe(\uq) \). We consider the action of the inhomogeneous Gaudin algebras \( \Aa(\uz(t);\uq) \) on the tensor product \( V_\lambda \ox V_\mu \) and the parallel transport induced along \( \gamma(t) \). The spectrum of \( \Aa(\uq)^{\ox 2} \) is \( \Ee_{\uq}(\lambda) \times \Ee_{\uq}(\mu) \). 
\subsection{} Since \( \uJM_2 \) commutes with \( \Delta U(\glr) \), the algebra at $\gamma(0)$, \( \uJM_2\cdot \Delta \Aa(\uq) \), acts on isotypic components of \( V_\lambda \otimes V_\mu \) which are of the form \( V_\nu \otimes (V_\lambda \otimes V_\mu)_\nu^{\sing}\). Here \( M^{\sing}_\nu \) denotes the highest weight vectors of weight \( \nu \) in a module \( M \). The subalgebra \( \uJM_2 \) acts on the first tensor factor and \( \Delta \Aa(\uq) \) on the second. We denote the spectrum of \( \uJM_2 \) on \( (V_\lambda \otimes V_\mu)_\nu^{\sing} \) by \( \Ee(\lambda,\mu)_\nu \). Thus the spectrum on this isotypic component is identified with \(  \Ee_{\uq}(\nu) \times \Ee(\lambda,\mu)_\nu  \).  By parallel transport we obtain a map 
\[ p^{\gamma}_{\lambda,\mu}: \Ee_{\uq}(\lambda) \times \Ee_{\uq}(\mu) \longrightarrow \bigsqcup_{\nu \in \Part_{\leq r}} \Ee_{\uq}(\nu) \times \Ee(\lambda,\mu)_\nu \]
\begin{theorem}[{\cite[Theorem 12.5]{HKRW}}]
  \label{thm:HKRW-mon-thm}
  The map \( p^{\gamma}_{\lambda,\mu} \) is an isomorphism of crystals.
\end{theorem}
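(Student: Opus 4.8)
The plan is to show that parallel transport along \( \gamma \) is precisely the abstract crystal isomorphism between the two sides, in the spirit of the monodromy results of \cite{HKRW}. Along \( \gamma \) the algebra \( \uBethe(\uz(t);\uq) \) has simple spectrum for \( 0<t<\infty \) (here one uses that the real locus \( \Rdel^n\times\Rdel^r \) lies in the simple-spectrum locus, cf. \cite{HKRW}) and acts semisimply at the endpoints \( t=0,\infty \), so by the construction of \ref{continuation} the map \( p^{\gamma}_{\lambda,\mu} \) is a well-defined bijection of sets; it is moreover weight-preserving, since every algebra occurring along \( \gamma \) contains \( \diag(\glr) \). Hence the entire content of the theorem is that \( p^{\gamma}_{\lambda,\mu} \) intertwines the crystal operators \( \tilde{e}_i \) (equivalently \( \tilde{f}_i \)) for \( 1\le i\le r-1 \). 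I would first record the two crystal structures: on the source \( \Ee_{\uq}(\lambda)\times\Ee_{\uq}(\mu) \) it is the tensor product of the irreducible \( \glr \)-crystals of \( V_\lambda \) and \( V_\mu \), in the order determined by the collision path, while on the target \( \bigsqcup_{\nu}\Ee_{\uq}(\nu)\times\Ee(\lambda,\mu)_\nu \) it is the disjoint union of \( \dim (V_\lambda\ox V_\mu)^{\sing}_{\nu} \) copies of the irreducible crystal of \( V_\nu \). Abstractly these crystals agree by the Littlewood--Richardson rule, and the theorem says \( p^{\gamma}_{\lambda,\mu} \) is this isomorphism.

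Next I would recall, exactly as in the proof that \( \comb_{\underline{0}}\circ\kappa \) is a crystal isomorphism, how \( \tilde{e}_i \) is computed: for a shift-of-argument algebra one parallel transports the \( \uq \)-datum to a point \( \uq' \) of the boundary divisor \( \overline{\mathcal{M}}(i,i+1)\subset\overline{\mathcal{M}}_{0,r+1} \) where the marked points \( i \) and \( i+1 \) collide; there the algebra acquires a distinguished \( \mathfrak{sl}_2 \)-generator attached to the simple root \( \alpha_i \), the eigenlines organise into \( \mathfrak{sl}_2 \)-strings, and \( \tilde{e}_i \) moves one step up a string. The geometric input I need is that this \( \uq \)-degeneration commutes, up to homotopy, with the \( \uz \)-degeneration \( \gamma \): in the ambient family of inhomogeneous Gaudin algebras the locus where the \( z_j \) collide and the locus where the \( q_i \) collide are transverse, the family being a product near their intersection, so parallel transport in the two directions commutes. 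Granting this, I obtain a homotopy-commuting square whose horizontal sides are two copies of \( p^{\gamma}_{\lambda,\mu} \) and whose vertical sides are the \( \alpha_i \)-string operations over \( \gamma(\infty) \) and over \( \gamma(0) \).

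It then remains to identify the two vertical maps. Over \( \gamma(\infty) \) the algebra is \( \uBethe(\uq)^{\ox 2} \) acting on \( V_\lambda\ox V_\mu \); degenerating \( \uq \) to \( \overline{\mathcal{M}}(i,i+1) \) preserves the tensor product form \( \uBethe(\uq')^{\ox 2} \), and the resulting \( \alpha_i \)-string structure on \( \Ee_{\uq'}(\lambda)\times\Ee_{\uq'}(\mu) \) is built from the string structures of the two factors by exactly the rule defining \( \tilde{e}_i \) on a tensor product of crystals. Over \( \gamma(0) \) the algebra is generated by \( \uJM_2 \) and \( \Delta\uBethe(\uq) \); here the \( \mathfrak{sl}_2 \)-generator lies in \( \Delta\uBethe(\uq')\subseteq\Delta U(\glr) \), i.e. it is the diagonal copy of the root-\( \alpha_i \) subalgebra acting on the first factor \( V_\nu \) of an isotypic block \( V_\nu\ox(V_\lambda\ox V_\mu)^{\sing}_\nu \), while \( \uJM_2 \) commutes with \( \Delta\glr \) by Theorem~\ref{thm:bethe-algebra-commutative} and so acts within each \( \alpha_i \)-string; thus the string operation here is exactly \( \tilde{e}_i \) on the \( \Ee_{\uq}(\nu) \)-factor, which is the target crystal structure. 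Chasing the square gives \( p^{\gamma}_{\lambda,\mu}\circ\tilde{e}_i = \tilde{e}_i\circ p^{\gamma}_{\lambda,\mu} \) for all \( i \), so \( p^{\gamma}_{\lambda,\mu} \) is a morphism of crystals, and being a bijection it is an isomorphism.

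The hard part will be establishing the commuting square. Since \( \mathfrak{X}_{\uq} \) is not known to be isomorphic to the expected moduli of framed stable curves (Remark~\ref{rem:framed-stable-curves}), one cannot simply quote an operadic product structure; instead, as in Lemma~\ref{lem:limit-bethe-algebras} and \cite[Section~10]{HKRW}, I would argue directly with the generating series \( f_l(w;\uz;\uq)=\phi_w(\uz;\uq)(S_l) \), checking that the limits as the \( z_j \) collide and as the \( q_i \) collide may be taken in either order and yield the same subalgebra, and then I would propagate the (possibly degenerate) parallel transport of \ref{continuation} carefully through the two corners of the square where the limiting algebra is semisimple but has repeated eigenvalues.
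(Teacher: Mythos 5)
This statement is not proved in the paper at all: it is imported verbatim from \cite[Theorem 12.5]{HKRW}, so there is no internal argument for you to match, and what you have written is an attempt to reconstruct the proof of the cited result. Your outline is in the right spirit (crystal operators computed by degenerating \( \uq \) to the wall \( \overline{\mathcal{M}}(i,i+1) \), then commuted with the \( \uz \)-transport along \( \gamma \), with degenerate transport in the sense of \ref{continuation} at the endpoints), but as a proof it has two genuine gaps.

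First, the homotopy-commuting square is not a technical footnote to be ``granted'': it is essentially the whole theorem. You need a two-parameter compactified family in which both the \( \uz \)-collisions and the \( \uq \)-collisions occur, together with a proof that near the relevant corner the family is a product so that the two transports commute; neither the paper (whose \( \mathfrak{X}_{\uq} \) is built for fixed \( \uq \), and whose Lemma~\ref{lem:limit-bethe-algebras} handles only a one-parameter homotopy in the \( \uz \)-direction) nor your sketch constructs this, and the expected moduli description is explicitly conjectural (Remark~\ref{rem:framed-stable-curves}). Second, and more seriously, the identification of the vertical map at \( \gamma(\infty) \) does not work as stated. At a wall point \( \uq'\in\overline{\mathcal{M}}(i,i+1) \) the algebra is still the tensor product \( \Aa(\uq')\ox\Aa(\uq') \); its joint eigenlines organise into \emph{factorwise} \( \alpha_i \)-strings for two commuting copies of the root \( \mathfrak{sl}_2 \), not into a single string structure realising the signature rule \( \tilde{e}_i(b_1\ox b_2)=\tilde{e}_ib_1\ox b_2 \) or \( b_1\ox\tilde{e}_ib_2 \). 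Meanwhile the operation you transport from \( \gamma(0) \) comes from the diagonal root \( \mathfrak{sl}_2 \) inside \( \Delta\Aa(\uq') \), and diagonal \( \mathfrak{sl}_2 \)-isotypic projections do not preserve the eigenlines of \( \Aa(\uq')^{\ox 2} \). So the crucial corner of your diagram chase --- that the spectrally defined string operation at \( \gamma(\infty) \) \emph{is} the tensor-product crystal operator --- is exactly the two-factor (ultimately rank-one) case of the theorem and needs its own argument, for instance an explicit \( \mathfrak{gl}_2 \) analysis of the spectrum of \( \uJM_2\cdot\Delta\Aa(\uq) \) on \( V_a\ox V_b \) and of the transport along a collision path; as written the argument is circular at that point. (Also note that simplicity of the spectrum of \( \uBethe(\uz(t);\uq) \) on \( V_\lambda\ox V_\mu \) for real parameters is itself a nontrivial theorem of \cite{HKRW}, since weight spaces of a tensor product are not one-dimensional; you cite it correctly, but it should be flagged as an input rather than an observation.)
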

\subsection{The general case} Now return to the situation for general \( n \). Identifying \( V_{k\varpi_1} \) with the space of homogeneous degree \( k \) polynomials in \( \CC[x_1,x_2,\ldots,x_r] \) we have a decomposition
\[ \CC[\Mat_{r\times n}] = \bigoplus_{\uk = (k_1,k_2,\ldots,k_n) \in \NN^n} V_{k_1\varpi_1} \ox V_{k_2\varpi_2} \ox \cdots \ox V_{k_n \varpi_1} \]
Fix a sequence \( \uk \in \NN^n \). We will consider the action of the inhomogeneous Gaudin algebras on the summand \(  V(\uk) := V_{k_1\varpi_1} \ox V_{k_2\varpi_2} \ox \cdots \ox V_{k_n \varpi_1} \). 
Again, the algebra at \( \gamma(\infty) \) is \(  \uBethe(\uq)^{\ox n} \) and at $\gamma(0)$ is generated by \( \uJM_n \) and \( \Delta^n\uBethe(\uq) \). The algebra at $\gamma(0)$ thus acts on isotypic components which are of the form \( V_{\nu} \otimes V(\uk)^{\sing}_\nu \). We denote the set of eigenspaces of the Jucys-Murphy elements acting on \( V(\uk)^{\text{sing}}_\nu \) by \( \Ee(\uk)_{\nu} \). Thus we obtain a map
\[ p_{\uk}^{\gamma}: \prod_{a=1}^n \Ee_{\uq}(k_a\varpi_1)  \longrightarrow \bigsqcup_{\nu \in \Part_{\le \min\{r,n\}}}  \Ee_{\uq}(\nu) \times \Ee(\uk)_{\nu}. \]
                     
\subsection{} The duality of Section~\ref{duality} identifies \( V(\uk)^{\sing}_\nu \) with a copy of \( (L_{\nu})_{\uk} \), the \( \uk \)-weight space in the irreducible \( \gl_n \)-module corresponding to the partition \( \nu \), denoted $L_{\nu}$. Furthermore \( \pi^r(\uJM_n) = \pi^n(\Delta^r\uGT_n) \) (see~\cite[Theorem~2]{Chervov:2010fz}). Together, these facts imply \( \Ee(\uk)_\nu = \Ee_{\underline{0}}(\nu)_{\uk}  \), the subset of \( \Ee_{\underline{0}}(\nu) \) consisting of eigenspaces contained in the \( \uk \)-weight spaces. Restriction of \( \comb^n_{\underline{0}} \) to \( \Ee_{\underline{0}}(\nu)_{\uk} \), produces a bijection \( \Ee(\uk)_\nu \longrightarrow \SSYT_n(\nu,\uk) \), the set of semistandard tableaux of shape \( \nu \) and content \( \uk \).
\subsection{} The spectrum \( \Ee_{\underline{\infty};\uq}(r\times n) \) of \( \Aa(\uq)^{\ox n} \) acting on \( \CC[\Mat_{r\times n}] \) has a decomposition
\[ \Ee_{\underline{\infty};\uq}(r\times n) = \bigsqcup_{\uk \in \NN^n} \prod_{a=1}^n \Ee_{\uq}(k_a\varpi_1). \]
The restriction of \( \comb_{\underline{\infty};\uq}^{r\times n} \) to \( \prod_{a=1}^n \Ee_{\uq}(k_a\varpi_1) \) induces a bijection \( \comb_{\underline{\infty};\uq}^{r\times n}(\uk): \prod_{a=1}^n \Ee_{\uq}(k_a\varpi_1) \longrightarrow \Mat_{r\times n}(\NN,\uk) \).
\begin{theorem}
  \label{thm:HKRW-to-RSK}
The map \( (\comb^r_{\underline{0}}\circ \kappa,\comb^n_{\underline{0}}) \circ p_{\uk}^{\gamma} \circ \comb_{\underline{\infty};\uq}^{r\times n}(\uk)^{-1} \) is the RSK correspondence restricted to \( \Mat_{r\times s}(\NN,\uk) \). 
\end{theorem}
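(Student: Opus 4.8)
Fix $\uk\in\NN^n$ and write $f_{\uk}$ for the map in the statement, so $f_{\uk}\colon\Mat_{r\times n}(\NN,\uk)\to\bigsqcup_\nu\SSYT_r(\nu)\times\SSYT_n(\nu,\uk)$; assembling these over all $\uk$ gives $f\colon\Mat_{r\times n}(\NN)\to\bigsqcup_\nu\SSYT_r(\nu)\times\SSYT_n(\nu)$. My plan is to deduce the theorem from the rigidity result Proposition~\ref{prp:rsk-rigidity}: it suffices to prove that \emph{(i)} $f$ is an isomorphism of $\glr$-crystals, and \emph{(ii)} writing $S(A)\in\SSYT_n(\nu,\uk)$ for the $\SSYT_n$-component of $f_{\uk}(A)$, one has $S(A)|_{n-1}=P(A)|_{n-1}$ for all $A$. (That this yields ``$f_{\uk}=\RSK$ on $\Mat_{r\times n}(\NN,\uk)$'' — read in the order $(Q,P)$ — uses the transpose symmetry of Theorem~\ref{thm:RSK}; when $r\ne n$ the two tableaux lie in different sets and the order is forced.) I would prove \emph{(i)} and \emph{(ii)} simultaneously by induction on $n$; the case $n=1$ is trivial, and $n=2$ reduces to Theorem~\ref{thm:HKRW-mon-thm}, as \emph{(ii)} is then automatic ($S(A)$ and $P(A)$ share content $\uk$, so $S(A)|_1=P(A)|_1$ is the single row of $k_1$ boxes).

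\textbf{Identifying the two ends.}
First I would check that the endpoint identifications are $\glr$-crystal isomorphisms. On the source, $\comb^{r\times n}_{\underline{\infty};\uq}(\uk)$ identifies the spectrum $\prod_a\Ee_{\uq}(k_a\varpi_1)$ of $\Aa(\uq)^{\ox n}$ on $V(\uk)$ with $\prod_a\NN^r(k_a)=\Mat_{r\times n}(\NN,\uk)$; each factor is a $\glr$-crystal isomorphism by the Proposition of Section~\ref{sec:crystal-structures}, and the product is one too since the $\glr$-crystal structure on a tensor-product spectrum is the tensor product of the factor crystals \cite{HKRW}. On the target, $\comb^r_{\underline 0}\circ\kappa\colon\Ee_{\uq}(\nu)\to\SSYT_r(\nu)$ is a $\glr$-crystal isomorphism by the same Proposition, while $\comb^n_{\underline 0}$ identifies the multiplicity set $\Ee(\uk)_\nu=\Ee_{\underline 0}(\nu)_{\uk}$ with $\SSYT_n(\nu,\uk)$ — here the duality of Section~\ref{duality} and $\pi^r(\uJM_n)=\pi^n(\Delta^r\uGT_n)$ are used — on which $\glr$ acts trivially, so by Proposition~\ref{prp:RSK-crystal-iso} the $\glr$-crystal $\bigsqcup_\nu\SSYT_r(\nu)\times\SSYT_n(\nu,\uk)$ is exactly the content-$\uk$ part of the RSK target. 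Thus \emph{(i)} is equivalent to showing that the parallel transport $p^\gamma_{\uk}$ is a $\glr$-crystal isomorphism.

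\textbf{The inductive step, $n\ge 3$.}
Here I would use Proposition~\ref{prp:gamma-continuous} to replace $\gamma$ by the homotopic path $\gamma_0$, so $p^\gamma_{\uk}=p^{\gamma_0}_{\uk}$, and factor $\gamma_0$ through $\gamma_0(1)$. By Lemma~\ref{lem:limit-bethe-algebras}(1), along $\gamma_0|_{[1,\infty]}$ the algebra is $\Aa(z_1(t-1),\dots,z_{n-1}(t-1);\uq)\ox\Aa(\uq)$, i.e.\ an $(n-1)$-particle collision path on the first $n-1$ tensor factors with $\Aa(\uq)$ constant on the last; by the inductive hypothesis its transport is therefore a $\glr$-crystal isomorphism whose $\SSYT_{n-1}$-valued part (after the endpoint identifications) is the insertion tableau $P(A')$, where $A'$ is $A$ with its last column deleted. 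By parts (2) and (3) of Lemma~\ref{lem:limit-bethe-algebras}, along $\gamma_0|_{[0,1]}$ the last particle travels from the common origin out to $\infty$ while the subalgebra $\uJM_{n-1}$ persists at every point; I would argue its transport is a $\glr$-crystal isomorphism by Theorem~\ref{thm:HKRW-mon-thm} applied to the two-factor decomposition $V(\uk)=V(\uk')\ox V_{k_n\varpi_1}$ with $\uk'=(k_1,\dots,k_{n-1})$ (the monodromy argument of \cite{HKRW} does not require the tensorands to be irreducible). Composing gives \emph{(i)}. For \emph{(ii)}: $S(A)|_{n-1}$ records the $\gl_{n-1}$ Gelfand--Tsetlin datum of the eigenvector, equivalently (via Section~\ref{duality}) its $\uJM_{n-1}$-eigenvalue; since $\uJM_{n-1}$ acts on $V(\uk)$ independently of $t$ and lies in the algebra at every point of $\gamma_0|_{[0,1]}$, a continuous family of eigenspaces along $\gamma_0|_{[0,1]}$ stays inside one $\uJM_{n-1}$-eigenspace, so $S(A)|_{n-1}$ equals the $\uJM_{n-1}$-eigenvalue read at $\gamma_0(1)$, which by induction is $P(A')$. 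As $P(A')=P(A)|_{n-1}$ — deleting the boxes containing the largest entry from an insertion tableau gives the insertion tableau of the subword with that letter removed — this is \emph{(ii)}, and Proposition~\ref{prp:rsk-rigidity} closes the induction.

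\textbf{Expected main difficulty.}
The crux is this factorisation of $p^\gamma_{\uk}$ in the inductive step. In the (conjectural) identification of $\mathfrak{X}_{\uq}$ with the moduli of framed stable genus-zero curves the required homotopy — release the last particle first, then the rest — is evident; but that identification is unavailable, so one must instead produce the homotopy $\gamma\simeq\gamma_0$ and the exact description of the algebras along $\gamma_0$ (above all the continuity of $\gamma_0$ at $t=1$, where the two limiting behaviours of Lemma~\ref{lem:limit-bethe-algebras}(1) and (2) must be glued) by the explicit Gaudin-algebra computations of Lemma~\ref{lem:limit-bethe-algebras} and Proposition~\ref{prp:gamma-continuous}. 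Once those are in hand, the combinatorial identification is cheap: it is forced by Theorem~\ref{thm:HKRW-mon-thm} and the rigidity Proposition, so no Schensted insertions need be tracked directly.
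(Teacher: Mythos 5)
Your proposal is correct and follows essentially the same route as the paper: induction on $n$ via the homotopy $\gamma\simeq\gamma_0$ (Proposition~\ref{prp:gamma-continuous}), the limiting-algebra description of Lemma~\ref{lem:limit-bethe-algebras}, the two-particle monodromy Theorem~\ref{thm:HKRW-mon-thm} combined with the Pieri rule, and the rigidity Proposition~\ref{prp:rsk-rigidity}, the only difference being presentational (the paper organises this as a commutative induction diagram and invokes rigidity for the combinatorial bottom rectangle, whereas you invoke it once for the spectral map and verify the restriction hypothesis through the constancy of the $\uJM_{n-1}$-eigenvalue). The single point to tighten is your appeal to a reducible-tensorand version of Theorem~\ref{thm:HKRW-mon-thm} for the transport over $[0,1]$: since the two-particle algebra alone does not have simple spectrum on $V(\uk|_{n-1})\ox V_{k_n\varpi_1}$, one should, as in the paper, first split along the persistent $\uJM_{n-1}$-spectrum into irreducible Pieri pairs $V_\lambda\ox V_{k_n\varpi_1}$ and apply the theorem to each piece.
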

\begin{proof}
  We proceed by induction. For \( n=2 \) this is obtained by an explicit calculation or by applying Theorem~\ref{thm:HKRW-mon-thm} and noting that \( V_{k_1\varpi_1} \ox V_{k_2 \varpi_1} \) is multiplicity free and the RSK correspondence is hence the unique morphism of crystals.
  For \( n > 2 \), we aim to prove the very outer square in the following figure commutes.
  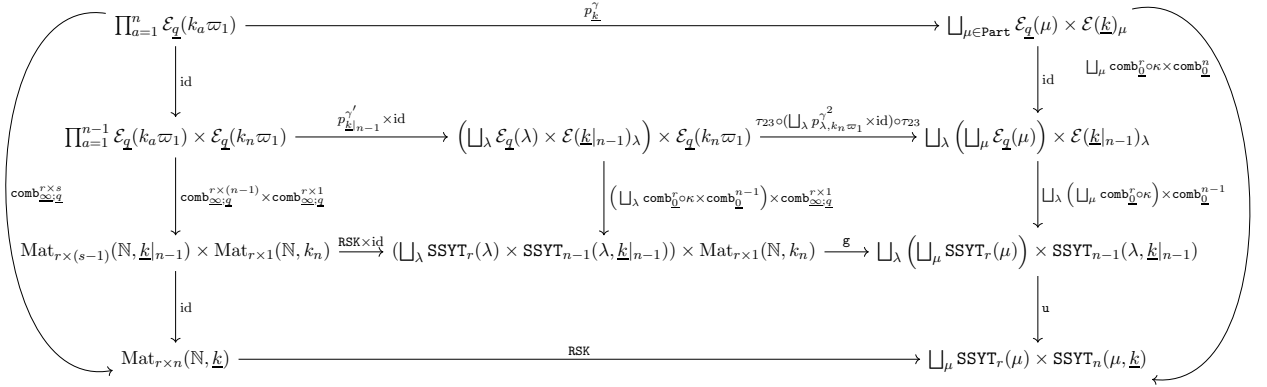
\begin{figure}[h!]
  
  \[ 
    \adjustbox{scale=0.65, left}{
  \begin{tikzcd}[row sep=huge]
    \prod_{a=1}^n \Ee_{\uq}(k_a\varpi_1)
        \arrow[d,"\id"]
        \arrow[ddd,bend right=100,"\comb^{r\times s}_{\underline{\infty};\uq}"]
        \arrow[rr,"p^{\gamma}_{\uk}"] &&
    \bigsqcup_{\mu \in \Part} \Ee_{\uq}(\mu) \times \Ee(\uk)_{\mu}
        \arrow[ddd,bend left=100,"\bigsqcup_{\mu}\comb^r_{\underline{0}} \circ \kappa \times \comb_{\underline{0}}^n "' near start]
        \arrow[d,"\id"]
    \\ 
    \prod_{a=1}^{n-1} \Ee_{\uq}(k_a\varpi_1) \times \Ee_{\uq}(k_n\varpi_1)
        \arrow[d,"\comb_{\underline{\infty};\uq}^{r\times (n-1)}\times \comb_{\underline{\infty};\uq}^{r\times 1}"]
        \arrow[r,"p^{\gamma'}_{\uk|_{n-1}} \times \id"] &
    \left( \bigsqcup_{\lambda} \Ee_{\uq}(\lambda) \times \Ee(\uk|_{n-1})_\lambda \right) \times \Ee_{\uq}(k_n\varpi_1)
        \arrow[d,"\big( \bigsqcup_\lambda\comb_{\underline{0}}^{r} \circ \kappa \times \comb_{\underline{0}}^{n-1} \big) \times \comb_{\underline{\infty};\uq}^{r\times 1}"]
        \arrow[r,"\tau_{23}\circ (\bigsqcup_{\lambda} p^{\gamma^2}_{\lambda,k_n\varpi_1} \times \id) \circ \tau_{23}"] &
    \bigsqcup_{\lambda} \left( \bigsqcup_{\mu} \Ee_{\uq}(\mu) \right) \times \Ee(\uk|_{n-1})_\lambda
        \arrow[d,"\bigsqcup_\lambda \big( \bigsqcup_\mu \comb_{\underline{0}}^r\circ \kappa \big) \times \comb_{\underline{0}}^{n-1}"]
    \\
    \Mat_{r \times (s-1)}(\NN,\uk|_{n-1}) \times \Mat_{r \times 1}(\NN,k_n)
        \arrow[d,"\id"]
        \arrow[r,"\RSK \times \id"] &
    \left( \bigsqcup_{\lambda} \SSYT_r(\lambda) \times \SSYT_{n-1}(\lambda,\uk|_{n-1}) \right) \times \Mat_{r \times 1}(\NN,k_n)
        \arrow[r,"\mathtt{g}"] &
    \bigsqcup_{\lambda} \left( \bigsqcup_{\mu} \SSYT_r(\mu) \right)\times \SSYT_{n-1}(\lambda,\uk|_{n-1})
        \arrow[d,"\mathtt{u}"]
    \\
    \Mat_{r\times n}(\NN,\uk)
        \arrow[rr,"\RSK"] &&
    \bigsqcup_{\mu} \SSYT_r(\mu) \times \SSYT_n(\mu,\uk)
  \end{tikzcd}
  }
  \]
  \caption{Induction diagram}
  \label{fig:induction-diagram}
\end{figure}
 It is Figure~\ref{fig:induction-diagram} that displays how our induction will work. Let \( \uk|_{n-1} = (k_1,k_2,\ldots,k_{n-1}) \). By Proposition~\ref{prp:gamma-continuous}, \( \gamma_0 \) is homotopy equivalent to \( \gamma \). If we let \( p_{b,a}^{\gamma_0} \) be the map induced by parallel transport along \( \gamma_0 \) from \( t=a \) to \( t=b \) we thus have \( p_{\uk}^{\gamma} = p_{0,\infty}^{\gamma_0} = p_{0,1}^{\gamma_0} \circ p_{1,\infty}^{\gamma_0}  \). Let \( \uz'(t) = (z_1(t),z_2(t),\ldots,z_{n-1}(t)) \) be the collision path given by only considering the first \( n-1 \) points, and denote by \( \gamma'(t) \) the associated path in the moduli space. According to Lemma~\ref{lem:limit-bethe-algebras}, the path of algebras along $\gamma_0(t)$ for $t\geq 1$ equals  \(  \Aa(\uz'(t-1);\uq) \otimes \Aa(\uq) \). Thus we have
  \[ p_{1,\infty}^{\gamma_0} = p_{\uk|_{n-1}}^{\gamma'}\times \id : \prod_{a=1}^{n-1} \Ee_{\uq}(k_a\varpi_1) \times  \Ee_{\uq}(k_n\varpi_1) \longrightarrow \left( \bigsqcup_{\lambda \in \Part_{\le \min\{r,n-1\}}} \Ee_{q}(\lambda) \times \Ee(\uk|_{n-1})_{\lambda} \right) \times \Ee_{\uq}(k_n\varpi_1). \]
We will apply the induction hypothesis to this factorisation and the map \( p^{\gamma'}_{\uk|_{n-1}} \).
We must also consider the parallel transport map \( p^{\gamma_0}_{0,1} \). This comes from moving just two particles, one at \( 0 \) and the other at \( \frac{2-t}{1-t}z_n(t) \) while the rest remain fixed. The path \( \uz^2(t) = (0,\frac{2-t}{1-t}z_n(t)) \) is not a collision path, firstly \( t \in (0,1) \) and secondly since the first coordinate does not tend to infinity. Shifting both coordinates by \( z_{n-1}(t) \) and reparametrising 
gives a collision path, without effecting the induced path in the moduli space and the algebras involved. Let \( \gamma^2 \) be the path in the moduli space induced by \( \uz^2(t) \).
Let \( \lambda \in \Part \) and \( l\in \mathbb{N} \). By the above parallel transport \( p^{\gamma^2}_{\lambda,l\varpi_1} : \Ee_{\uq}(\lambda) \times \Ee_{\uq}(l\varpi_1) \longrightarrow \bigsqcup_{\mu} \Ee_{\uq}(\mu) \times \Ee(\lambda, l\varpi_1)_\mu \) arises from a collision path and is thus by Theorem~\ref{thm:HKRW-mon-thm} a morphism of crystals. Let \( \Part(\lambda,l) \) be the set of partitions formed by adding \( l \) boxes to \( \lambda \), no two in the same column. The Pieri rule states that the space \( (V_\lambda \otimes V_{l\varpi_1})^{\sing}_\mu \) is zero unless \( \mu \in \Part(\lambda,l) \) in which case it is one dimensional. Thus \( \Ee(\lambda, l\varpi_1)_{\mu} \) is a single point and we can identify the codomain of \( p^{\gamma^2}_{\lambda,l\varpi_1} \) with \( \bigsqcup_{\mu \in \Part(\lambda,l)} \Ee_{\uq}(\mu) \) where the union is over \( \mu \in \Part(\lambda,l) \). Using this fact and Lemma~\ref{lem:limit-bethe-algebras}, we have
  \begin{align*}
    p_{1,0}^{\gamma_0} = \tau_{23}\circ \left(\bigsqcup_{\lambda} p^{\gamma^2}_{\lambda,k_n\varpi_1} \times \id\right) \circ \tau_{23} :& \left( \bigsqcup_{\lambda \in \Part_{\le \min\{r,n-1\}}} \Ee_{\uq}(\lambda) \times \Ee(\uk|_{n-1})_\lambda \right) \times \Ee_{\uq}(k_n\varpi_1) \\ &\hspace{32pt}\longrightarrow \bigsqcup_{\lambda \in \Part_{\le \min\{r,n-1\}}} \left( \bigsqcup_{\mu \in \Part(\lambda,k_n)} \Ee_{\uq}(\mu) \right) \times \Ee_{\uq}(k_n\varpi_1),
  \end{align*}
  where \( \tau_{23} \) is simply the map that swaps the second and third factors. This explains the second row and the commutativity of the top square of Figure~\ref{fig:induction-diagram}.
  
  Let \( A \in \Mat_{r \times n}(\NN,\uk)  \). We can think of \( A \) as a pair \( (A_{\le r,\le n-1}, A_{\le r,n}) \) (the leftmost \( r \times (n-1) \) submatrix and the final column). With this identification, \( \Ee_{\uq}(r\times n) = \Ee_{\uq}(r\times n-1)\times \Ee_{\uq}(r\times 1) \). The map \( \comb_{\underline{\infty};\uq}^{r \times s} \) factors through this identification via the map \( \comb_{\underline{\infty};\uq}^{r \times (n-1)} \times \comb_{\underline{\infty};\uq}^{r \times 1} \) which explains the commutativity of the leftmost cell of Figure~\ref{fig:induction-diagram}.
  The algebra \( \uJM_{n} \subset U(\glr)^{\ox n} \) is generated by \( \uJM_{n-1}\ox 1 \) and \( \uJM_2^{^{(1\ldots n-1)(n)}} \). We also have that \( \pi^r(\uJM_n) = \pi^n(\Delta^r\uGT_n) \) and \( \pi^r(\uJM_{n-1}\ox \id) = \pi^n(\Delta^r\uGT_{n-1}) \). Observe that a point in the spectrum of \( \uGT_n \) is completely determined by a point in the spectrum of \( \uGT_{n-1} \), and a highest weight. Thus we obtain an identification
  \[ \bigsqcup_{\mu \in \Part_{\le \min\{r,n\}}} \Ee(\uk)_{\mu} = \bigsqcup_{\lambda \in \Part_{\le \min\{r,n-1\}}} \bigsqcup_{\mu \in \Part(\lambda,k_n)} \Ee(\uk|_{n-1})_\lambda. \]
  In a similar fashion, if \( \mu \in \Part(\lambda,k_n) \) then given \( T \in \SSYT_{n-1}(\lambda,k|_{n-1}) \) there is a unique tableau \( T' \in \SSYT_n(\mu,k_n) \) given by simply adding boxes containing \( n \) to
  \( T \) in the unique way determined by the shape \( \mu \). This induces a \( \glr \)-crystal morphism
  \[ \mathtt{u}: \bigsqcup_{\lambda \in \Part_{\le \min\{r,n-1\}}} \SSYT_r(\mu) \times \SSYT_{n-1}(\lambda,\uk|_{n-1}) \longrightarrow \bigsqcup_{\mu \in \Part_{\min\{r,n\}}} \SSYT_r(\mu) \times \SSYT_n(\mu,\uk). \]
  The right hand square in the diagram commutes by the definition of \( \comb_{\underline{0}}^n \) and since this is compatible with restriction to \( \uGT_{n-1} \).
  Recall the set \( \Mat_{r \times 1}(\NN,k_n) \) has the structure of a \( \glr \)-crystal corresponding to the module \( V_{k_n\varpi_1} \). By the Pieri rule, the tensor product of crystals \( \SSYT_r(\lambda) \times \Mat_{r\times 1}(\NN,k_n) \) is isomorphic to \( \bigsqcup_{\mu \in \Part(\lambda,k_n)}\SSYT_r(\mu) \). The unique crystal isomorphism is given by sending a pair \( (T,A) \) to the result of inserting \( 1 \) into \( T \) exactly \( A_{11} \) times, then inserting \( 2 \) exactly \( A_{22} \) times, etc. This defines a crystal isomorphism
  \begin{align*}
    \mathtt{g}: &\left( \bigsqcup_{\lambda \in \Part_{\le \min\{r,n-1\}}} \SSYT_r(\lambda) \times \SSYT_{n-1}(\lambda,\uk|_{n-1}) \right) \times \Mat_{r\times 1}(\NN,k_n) \\
    &\hspace{120pt}\longrightarrow \bigsqcup_{\lambda \in \Part_{\le \min\{r,n-1\}}} \bigsqcup_{\mu \in \Part(\lambda,k_n)} \SSYT_r(\mu) \times \SSYT_{n-1}(\lambda,k|_{n-1}).
  \end{align*}
  The bottom most rectangle in Figure~\ref{fig:induction-diagram} commutes then by Proposition~\ref{prp:rsk-rigidity}.
  Now we can note that the middle left square commutes by induction, and the only remaining thing to show is that the middle right square commutes. We note that everything in sight is a \( \glr \)-crystal isomorphism by Proposition \ref{prp:GT-HKRW-crystals-agree}. Thus the images in the first factor of the product agree. The fact that the images in the second factor agree follows since projection of either map onto the second factor is \( \comb_{\underline{0}}^{n-1} \).
\end{proof}
As a corollary, we obtain Theorem~\ref{thm:main}.
\begin{corollary}
  \label{cor:main-theorem}
For any \( A \in \Mat_{r\times n}(\NN) \), \( S(A)=P(A) \) and \( T(A)=Q(A) \).
\end{corollary}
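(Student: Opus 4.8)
The statement is a restatement of Theorem~\ref{thm:main}, and the plan is to obtain it from Theorem~\ref{thm:HKRW-to-RSK}, which already exhibits \( \RSK \) as a composite of the very parallel transports and combinatorial bijections out of which \( S=\beta\circ\alpha^{-1} \) and \( T=\beta'\circ\alpha^{-1} \) are built; the work is to match the two descriptions. First I would reduce to a single weight: the decomposition \( \CC[\Mat_{r\times n}] = \bigoplus_{\uk\in\NN^n} V(\uk) \) is by \( \Aa(\uz;\uq) \)-submodules for every point of the family and matches the partition \( \Mat_{r\times n}(\NN) = \bigsqcup_{\uk}\Mat_{r\times n}(\NN,\uk) \) of the weight-basis labels, so \( \alpha,\beta,\beta' \) and \( P,Q \) all respect these splittings, and it suffices to prove \( S(A)=P(A) \) and \( T(A)=Q(A) \) on each \( \Mat_{r\times n}(\NN,\uk) \).

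Next I would treat the \( S \)-side. Since \( \alpha = \comb_{\underline\infty;\uq}^{r\times n}\circ p_\infty \), on \( \Mat_{r\times n}(\NN,\uk) \) we have \( \alpha^{-1} = p_\infty^{-1}\circ\comb_{\underline\infty;\uq}^{r\times n}(\uk)^{-1} \). The maps \( p_\infty \) and \( p_{\uz=0} \) are parallel transports, hence depend only on the homotopy class rel endpoints of the chosen path inside the simple-spectrum locus and its branched extension at the degenerate endpoints; invoking Proposition~\ref{prp:ztoinfty}, Proposition~\ref{prp:JMlimit} and the convexity of \( \Rdel^n \), I would fix a collision path \( \uz(\cdot) \) with \( \uz(1)=\uz \) as in Example~\ref{exm:path-for-monodromy} and use its \( t\ge 1 \) tail for \( p_\infty \) and its \( t\le 1 \) head for \( p_{\uz=0} \). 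Then \( p_{\uz=0}\circ p_\infty^{-1} \) is exactly parallel transport along \( \gamma \) from \( \gamma(\infty) \) to \( \gamma(0) \), restricted to the subalgebra \( \Delta^n\Aa(\uq) \) of the limiting algebra at \( \gamma(0) \). By the \( (\glr,\gl_n) \)-duality of Section~\ref{duality} the isotypic component of \( \CC[\Mat_{r\times n}] \) of \( \glr \)-type \( \nu \) is \( V^r_\nu\otimes V^n_\nu \), with \( \Delta^n\Aa(\uq) \) acting on the first tensor factor (it commutes with the \( \gl_n \)-action) and \( \uJM_n \) on the second; so restricting \( p^\gamma_{\uk} \) to \( \Delta^n\Aa(\uq) \) is the projection onto the first factor \( \Ee_{\uq}(\nu) \), and together with the passage \( \xi \) to isotypic components this equals \( \xi\circ p_{\uz=0}\circ p_\infty^{-1} \). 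Composing with \( \comb^r_{\underline0}\circ\kappa \) gives
\[
  S\big|_{\Mat_{r\times n}(\NN,\uk)} \;=\; (\comb^r_{\underline0}\circ\kappa)\circ \mathrm{pr}_1 \circ p^\gamma_{\uk}\circ \comb_{\underline\infty;\uq}^{r\times n}(\uk)^{-1},
\]
which is the first component of \( \RSK \) by Theorem~\ref{thm:HKRW-to-RSK}. As \( \uk \), \( r \) and \( n \) are arbitrary, this proves \( S=P \) in all cases.

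For the \( T \)-side I would argue in the same spirit: \( \beta' \) is the \( \gl_n \)-analogue of \( \beta \) — it transports \( \uq\to\underline0 \) (restricting to \( \uBethe(\uz;\underline0)=\pi^n(\Delta^r\uBethe(\uz)) \) via Section~\ref{duality}), sorts into \( \gl_n \)-isotypic components, performs the Gelfand--Tsetlin degeneration \( \uz\to\underline0 \) for \( \gl_n \), and reads off \( \comb^n_{\underline0} \). The only new point is that the \( \uGT_n \)-data produced this way agrees with \( \mathrm{pr}_2\circ p^\gamma_{\uk} \), landing in \( \Ee(\uk)_\nu \): the latter is the spectrum of \( \uJM_n=\pi^n(\Delta^r\uGT_n) \) (by~\cite[Theorem~2]{Chervov:2010fz}), which is independent of \( \uq \), so the two routes from the starting eigenline to a \( \uGT_n \)-eigenline in \( V^n_\nu \) — one by sending \( \uq\to\underline0 \) and then \( \uz\to\underline0 \), the other along the collision path \( \gamma \) at \( \uq \) fixed — end at the same subalgebra and are homotopic inside the locus over which the relevant eigenlines vary continuously, hence induce the same transport; this yields \( T\big|_{\Mat_{r\times n}(\NN,\uk)} = \comb^n_{\underline0}\circ\mathrm{pr}_2\circ p^\gamma_{\uk}\circ\comb_{\underline\infty;\uq}^{r\times n}(\uk)^{-1} \), the second component of \( \RSK \). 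Alternatively — and this is the cleaner finish — once \( S=P \) is known in all ranks one can bypass this homotopy and deduce \( T^{(r\times n)}(A)=Q(A) \) from the transposition symmetry \( \RSK(A^t)=(Q(A),P(A)) \) of Theorem~\ref{thm:RSK} together with the duality of Section~\ref{duality}.

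I expect the main obstacle to be precisely this path bookkeeping: checking that the ad hoc paths used to define \( \alpha \) (the ray \( t\uz \)), \( \beta \) (a generic \( \uz(t)\to\underline0 \)) and \( \beta' \) (a generic \( \uq(t)\to\underline0 \) followed by the Gelfand--Tsetlin ray) can be deformed, within the loci where parallel transport is defined and using the degenerate endpoints furnished by Propositions~\ref{prp:ztoinfty}, \ref{prp:JMlimit} and~\ref{prp:gamma-continuous}, to the collision path \( \gamma \) (and its deformation \( \gamma_0 \)), and that these deformations are compatible with restriction to the subalgebras \( \Delta^n\Aa(\uq) \) and \( \uJM_n \). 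The \( T \)-half is the more delicate one, since there the homotopy must mix the \( \uz \)- and \( \uq \)-directions — which is why, having established \( S=P \) for all ranks, finishing \( T=Q \) via the transposition symmetry of Theorem~\ref{thm:RSK} is preferable.
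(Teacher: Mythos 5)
Your proposal is correct and takes essentially the same route as the paper: the $S$-statement is obtained by identifying $S$ with the collision-path composite of Theorem~\ref{thm:HKRW-to-RSK}, and your preferred finish for $T$ — swapping the roles of $r$ and $n$ via the duality of Section~\ref{duality} and using $\RSK(A^t)=(Q(A),P(A))$ from Theorem~\ref{thm:RSK} — is exactly the paper's deduction $T(A)=P(A^t)=Q(A)$. The path-bookkeeping you flag (matching the paths defining $\alpha,\beta,\beta'$ with the collision path) is treated at the same level of detail in the paper's own proof, so nothing essential is missing.
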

\begin{proof}
First note that by definition \( S = \comb_{\underline{0}}^r\circ \kappa \circ \bigsqcup_{\uk} p^\gamma_{\uk} \circ \left( \comb_{\underline{\infty};\uq}^{r\times n} \right)^{-1} \). Thus by Theorem~\ref{thm:HKRW-to-RSK}, \( S(A)=Q(A) \).
To show that \( T(A)=Q(A) \) we swap the roles of \( r \) and \( n \) by appealing to \( (\glr, \gl_n) \)-duality. By choosing a collision path \( \uq \) and fixing a weight \( \underline{l} = (l_1,l_2,\ldots,l_r) \), we obtain from Theorem~\ref{thm:HKRW-to-RSK} a map
\[ p_{\underline{l}}^{\gamma}: \prod_{i=1}^r \Ee_{\uz}(l_i\varpi_1)  \longrightarrow \bigsqcup_{\nu \in \Part_{\le \min\{r,n\}}}  \Ee_{\uz}(\nu) \times \Ee(\underline{l})_\nu, \]
such that \( P = \comb_{\underline{0}}^n\circ \kappa \circ \bigsqcup_{\underline{l}} p_{\underline{l}}^{\gamma} \circ \left( \comb_{\underline{\infty};\uz}^{n \times r} \right)^{-1} \). On the other hand \( T = \comb_{\underline{0}}^n \circ \kappa \circ \bigsqcup_{\underline{l}} p_{\underline{l}}^{\gamma} \circ \left( \comb_{\underline{\infty};\uq}^{r \times n} \right)^{-1} \)
Note however that by construction \( \comb_{\underline{\infty};\uq}^{r \times n}\circ\left( \comb_{\underline{\infty};\uz}^{n \times r} \right)^{-1}(A) = A^t \), the transpose map. Thus \( T(A) = P(A^t) = Q(A) \) (by Theorem~\ref{thm:RSK}). This completes the proof.
\end{proof}
  
\section{Cherednik algebras and Calogero-Moser cells}
\subsection{}
\label{sec:RCA}
Let $\bc$ be a variable. The rational Cherednik algebra of $\mathfrak{S}_n$ is the quotient $\CH$ of $\CC[\mathfrak{S}_n]\ltimes \CC\langle \bc, x_1,\dots,x_n,y_1,\dots,y_n\rangle$ by the relations that $\bc$ is central and
\begin{align*}
[x_i,x_j]& = 0, & [y_i,y_j]&=0,\\
[y_i,x_j]&= \bc \, (i,j), & [x_i,y_i]&=- \bc \sum_{j\neq i} (i,j),
\end{align*} where $1\leq i, j \leq n$ and $i\neq j$.
Let $Z$ be the centre of $\CH$. There is an inclusion 
\[
	\iota:P:=\CC[\bc]\ot\CC[x_1,\dots,x_n,y_1,\dots,y_n]^{\mathfrak{S}_n\times \mathfrak{S}_n} \hookrightarrow Z.
\]
Let $V=\CC^n$ and $X=\spec Z$. The inclusion $\iota$ induces a surjection 
\[
	\Upsilon:X \longrightarrow \CC \times \CC^n/\mathfrak{S}_n \times \CC^n/\mathfrak{S}_n 
\]
\subsection{}For $c \in \CC$ let $P_c, Z_c, \CH_c$ and $\Upsilon_c$ be the specialisations of $P,Z, \CH$ and $\Upsilon$ at $\bc=c$.
For any non-zero $c\in \CC$ the preimage of $\{c\} \times  \CC^n/\mathfrak{S}_n \times \CC^n/\mathfrak{S}_n$ is identified with the Calogero--Moser space
\[
	\CM_n=\{(A,B)\in \mf{gl}_n \times \mf{gl}_n,\ \rk([A,B]+\id)=1\}/GL_n
\]
and $\Upsilon_c$ with the map that send pairs of matrices to their unordered set of eigenvalues.
The preimage at $c=0$ is identified with equivalences classes of pairs of commuting matrices, hence with $(\CC^n\times \CC^n)/\Delta\mathfrak{S}_n$ where $\Delta \mathfrak{S}_n$ denotes the diagonal of $\mathfrak{S}_n$ in $\mathfrak{S}_n\times \mathfrak{S}_n$.
\subsection{} The work of Mukhin--Tarasov--Varchenko relates the spectrum of a Bethe algebra to the rational Cherednik algebra of type $\mathfrak{S}_n$. We recall this now. 
Let $V = \CC^n$ and $\cV=V^{\otimes n}[\buz,\buq]$. Let $(V^{\otimes n})_{\mathbbm{1}}$ be the $(1,1,\dots ,1)$ weight subspace of $V^{\ot n}$ and $\cV_{\mathbbm{1}}=(V^{\otimes n})_{\mathbbm{1}}[\buz,\buq]$. The algebra $U(\mf{gl}_n[t])[\buq]$ acts on $\cV$ as explained in \cite[Section 2.4]{Mukhin:2014ga}, varying in $\buz$. 
\subsection{}
The universal Bethe algebra, $\mathsf{B}_n$ is a commutative subalgebra of $U(\mf{gl}_n[t])[\buq]$, see \cite[Section 2.2]{Mukhin:2014ga}. Through the action of $U(\mf{gl}_n[t])[\buq]$ on $\cV$, $\mathsf{B}_n$ specialises to a commutative algebra $\overline{\mathsf{B}}_n$ in $\End(\cV_{\mathbbm{1}})$. In turn, this specialises for $(\uz,\uq)\in \Creg^n\times\Creg^n$ to the inhomogeneous Gaudin algebra $\mathcal{A}(\underline{z}; \underline{q}) $ introduced in Subsection~\ref{defn:iga}, see \cite[Corollary~1]{ryb_uniqueness}.
\subsection{}
The space $(V^{\ot n})_{\mathbbm{1}}$ can be identified with $\CC[\mathfrak{S}_n]$ via
\[
	w \mapsto e_{w}:=e_{w(1)} \ot \dots \ot e_{w(n)}.
\]
The PBW decomposition of $\CH_1$,  
$\CC[x_1, \ldots , x_n]\ot \CC[\mathfrak{S}_n] \ot \CC[y_1, \ldots , y_n] \longrightarrow \CH$
then induces a $\CC$-linear isomorphism $\alpha: \cV_{\mathbbm{1}}\ \rightarrow \CH_1$ given by
\[
e_w f(z_1, \ldots, z_n)g(q_1, \ldots , q_n) \longmapsto f(x_1, \ldots ,x_n)\otimes w \otimes g(y_1, \ldots , y_n).
\]
The key results of Mukhin-Tarasov-Varchenko that relate the rational Cherednik algebra to the work in the earlier part of the paper are the following.
\begin{theorem}
\label{thm:MTVportmanteau}
	\begin{enumerate}
		\item  Under the map $\alpha$, the action of the centre $Z_1$ of $\CH_1$ by left multiplication is identified with the Bethe algebra $\uB_n$ acting on $\cV_{\mathbbm{1}}$, \cite[Theorem 2.8]{Mukhin:2014ga}. This induces an algebra isomorphism
\[
	\beta: \uB_n \longrightarrow Z_1, 
\]
\cite[Corollary 2.9]{Mukhin:2014ga}.
\item The subalgebra $\CC[\buz,\buq]^{\mathfrak{S}_n\times \mathfrak{S}_n}$ of $\End(\cV_{\mathbbm{1}})$ is contained in $\uB_n$, \cite[Lemma 2.6]{Mukhin:2014ga}, and the isomorphism $\beta$ restricts to the tautological one
\[
	\CC[\buz,\buq]^{\mathfrak{S}_n\times \mathfrak{S}_n} \longrightarrow P, 
\]
\cite[Theorem 4.3]{Mukhin:2014ga}.
\item The action of $\uB_n$ on $\cV_{\mathbbm{1}}$ commutes with the scalar action of $\CC[\bz,\bq]$, \cite[Lemma 2.3]{Mukhin:2014ga}.
	\end{enumerate}
\end{theorem}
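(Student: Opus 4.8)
The plan is to deduce each of the three items directly from the result of Mukhin--Tarasov--Varchenko cited after it; the only real work is to match our notation to theirs so that the cited statements apply, and to check one compatibility of module structures. Concretely, I would verify that the $\CC$-linear isomorphism $\alpha:\cV_{\mathbbm{1}}\to\CH_1$ intertwines the $\mathsf{B}_n$-action on $\cV_{\mathbbm{1}}$ with the action of $Z_1$ on $\CH_1$ by left multiplication, via a ring isomorphism $\beta$, and that under $\beta$ one distinguished commutative subalgebra corresponds to another.

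For item (1) I would start from the PBW isomorphism $\alpha$ and compare generators. The Bethe algebra $\mathsf{B}_n$ is generated by an explicit family of elements of $U(\mathfrak{gl}_n[t])[\underline{q}]$ (the coefficients of the transfer matrices, equivalently of the universal Gaudin differential operator), and these act on $\cV_{\mathbbm{1}}$ through the given action; on the other side $Z_1$ acts on $\CH_1$ by left multiplication. The substance of \cite[Theorem 2.8]{Mukhin:2014ga} is that under $\alpha$ these two families of operators coincide, so that $\alpha$ conjugates the $\mathsf{B}_n$-action into left multiplication by a family of central elements, and \cite{Mukhin:2014ga} shows this family generates $Z_1$; this yields the surjective algebra map $\beta:\mathsf{B}_n\to Z_1$ of \cite[Corollary 2.9]{Mukhin:2014ga}. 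To see that $\beta$ is injective, hence an isomorphism, I would use that $\mathsf{B}_n$ (acting on $\cV_{\mathbbm{1}}$) and $Z_1=\CC[\CM_n]$ are finite modules over the polynomial ring $\CC[\underline{z},\underline{q}]^{\mathfrak{S}_n\times\mathfrak{S}_n}$ — the former by \cite{Mukhin:2014ga}, the latter because $\Upsilon_1$ is finite — both of generic rank $|\mathfrak{S}_n|=n!$ (the generic degree of $\Upsilon_1$), and both in fact free over that base (for $Z_1$ because $\CM_n$ is smooth and the module is graded Cohen--Macaulay over a polynomial ring); a surjection between free modules of the same finite rank is bijective.

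For item (2), the symmetric functions in $\underline{z}$ and in $\underline{q}$ sit inside $\mathsf{B}_n$ as the lowest-order coefficients of the transfer matrices, which is \cite[Lemma 2.6]{Mukhin:2014ga}, and the fact that $\beta$ carries this copy of $\CC[\underline{z},\underline{q}]^{\mathfrak{S}_n\times\mathfrak{S}_n}$ onto $P$ by the tautological, degree-preserving identification is \cite[Theorem 4.3]{Mukhin:2014ga}, proved there by comparing associated graded rings. For item (3), the $U(\mathfrak{gl}_n[t])[\underline{q}]$-action on $\cV$ is $\CC[\underline{z},\underline{q}]$-linear by construction — the parameters $(\underline{z},\underline{q})$ enter only as the ring of scalars over which the family is defined — so the subalgebra $\mathsf{B}_n$ automatically commutes with the scalar action of $\CC[\underline{z},\underline{q}]$; this is \cite[Lemma 2.3]{Mukhin:2014ga} and needs nothing beyond unwinding the definitions.

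The main obstacle is the bijectivity of $\beta$ in item (1): upgrading an a priori algebra homomorphism $\mathsf{B}_n\to Z_1$ to an isomorphism. This is exactly where the genuine Bethe-ansatz input of \cite{Mukhin:2014ga} is needed: one must know that for generic $(\underline{z},\underline{q})\in\Creg^n\times\Creg^n$ the Bethe algebra acts on $(V^{\otimes n})_{\mathbbm{1}}$ with simple spectrum, so that its image has dimension exactly $n!=\dim\CC[\mathfrak{S}_n]$, matching the generic fibre of $\Upsilon_1$, together with a flatness argument over $\spec P$ propagating this equality of ranks from the generic point to an isomorphism of algebras over the whole base. Since all of this is carried out in \cite{Mukhin:2014ga}, for the present paper the proof of Theorem~\ref{thm:MTVportmanteau} amounts to assembling these references in the notation fixed above.
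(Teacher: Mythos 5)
Your proposal is correct and matches the paper's treatment: this theorem is a portmanteau of results proved in \cite{Mukhin:2014ga}, and the paper offers no independent proof beyond assembling the cited statements (Theorem 2.8, Corollary 2.9, Lemma 2.6, Theorem 4.3, Lemma 2.3) in the notation fixed for $\alpha$, $\cV_{\mathbbm{1}}$ and $Z_1$. Your additional sketch of how Mukhin--Tarasov--Varchenko establish the bijectivity of $\beta$ (simple spectrum for generic parameters plus comparison of ranks over $P$) is accurate but not needed here, since that work is entirely delegated to the citations.
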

\subsection{} By the above,  we have that \[
\Upsilon_1:	\spec \uB_n\longrightarrow \CC^n/\mathfrak{S}_n \times \CC^n/\mathfrak{S}_n.
\] 
Let $\sigma_1, \ldots , \sigma_n$ be the symmetric functions such that $$\prod_{i=1}^n (u-{\bf{z}}_i) = \sum_{i=1}^n (-1)^i \sigma_i (\buz) u^{n-i}.$$
\begin{theorem}
\label{thm:bethegaudin}
\begin{enumerate}
    \item Let $\underline{z} = (z_1, \ldots, z_n)\in \Creg^n$ and $\underline{q} = (q_1, \ldots , q_n)\in \Creg^n$. The algebra $\uB_n/\langle \sigma_i(\buz) - \sigma_i(\underline{z}), \sigma_i(\buq) - \sigma_i(\underline{q}): 1\leq i \leq n\rangle$ is isomorphic to the image of $\mathcal{A}(\underline{z}; \underline{q})$ in $End((V^{\ot n})_1)$.
    \item The morphism $\Upsilon_1$ is unramified over the image of $\Rdel^n \times \Rdel^n$ in $\CC^n/\mathfrak{S}_n \times \CC^n/\mathfrak{S}_n.$ 
    \item  Let $\underline{q} = (q_1, \ldots , q_n)\in \Rdel^n$. The algebra $\uB_n/\langle \sigma_i(\buz), \sigma_i(\buq) - \sigma_i(\underline{q}): 1\leq i \leq n\rangle$ contains the image of $\mathcal{A}(\underline{0}; \underline{q})$ in $End((V^{\ot n})_{\mathbbm{1}})$.
\end{enumerate}
\end{theorem}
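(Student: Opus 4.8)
The three statements are obtained by combining the Mukhin--Tarasov--Varchenko dictionary of Theorem~\ref{thm:MTVportmanteau}, the structure of the Calogero--Moser fibration, and known cyclicity and semisimplicity facts for inhomogeneous Gaudin algebras. Throughout write $A=\CC[\buz,\buq]^{\mathfrak S_n\times\mathfrak S_n}$; by Theorem~\ref{thm:MTVportmanteau} this is a polynomial subalgebra of $\overline{\mathsf B}_n\cong Z_1$, identified under $\beta$ with $P_1\subset Z_1$, and $\sigma_1(\buz),\dots,\sigma_n(\buz),\sigma_1(\buq),\dots,\sigma_n(\buq)$ are a set of polynomial generators of $A$.

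\emph{Part (1).} The first step is to record that $\Upsilon_1\colon \CM_n=\spec Z_1\to\CC^n/\mathfrak S_n\times\CC^n/\mathfrak S_n$ is finite and flat of degree $n!=\dim_\CC(V^{\otimes n})_{\mathbbm{1}}$: it is finite, its generic fibre has $n!$ points (identified with $\mathfrak S_n$ as recalled in the introduction), and flatness holds by miracle flatness since $\CM_n$ is smooth and the base is regular of the same dimension. Hence $Z_1$ is free of rank $n!$ over $A$, the ideal $\langle\sigma_i(\buz)-\sigma_i(\uz),\sigma_i(\buq)-\sigma_i(\uq)\rangle$ is $\mathfrak m\cdot\overline{\mathsf B}_n$ for $\mathfrak m\subset A$ the maximal ideal at $([\uz],[\uq])$, and so $\uB_n/\langle\cdots\rangle$ has $\CC$-dimension exactly $n!$. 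On the other hand, since $\overline{\mathsf B}_n$ commutes with the scalar action of all of $\CC[\buz,\buq]$ (Theorem~\ref{thm:MTVportmanteau}(3)), the specialisation $\cV_{\mathbbm{1}}\otimes_{\CC[\buz,\buq]}\CC_{(\uz,\uq)}=(V^{\otimes n})_{\mathbbm{1}}$ is a $\overline{\mathsf B}_n$-module on which $\overline{\mathsf B}_n$ acts through $\mathcal A(\uz;\uq)$ by \cite[Corollary~1]{ryb_uniqueness}, and since the $\sigma_i$ take their prescribed values there, this action factors through a surjection $\uB_n/\langle\cdots\rangle\twoheadrightarrow\mathrm{im}\,\mathcal A(\uz;\uq)$. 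To see it is an isomorphism it suffices that $\dim_\CC\mathrm{im}\,\mathcal A(\uz;\uq)=n!$, i.e. that $(V^{\otimes n})_{\mathbbm{1}}$ is a free rank-one module over $\mathrm{im}\,\mathcal A(\uz;\uq)$; I would take this from Mukhin--Tarasov--Varchenko. (Alternatively: $\cV_{\mathbbm{1}}\cong\CH_1$ is locally free of rank $(n!)^2$ over $\overline{\mathsf B}_n\cong Z_1$ because $\CH_1$ is Azumaya over its centre, $\CM_n$ being smooth, so $\cV_{\mathbbm{1}}/\mathfrak m\cV_{\mathbbm{1}}=\bigoplus_{w,w'\in\mathfrak S_n}(V^{\otimes n})_{\mathbbm{1}}$ is free of rank $(n!)^2$ over the Artinian algebra $\uB_n/\langle\cdots\rangle$; the $\mathfrak S_n\times\mathfrak S_n$-equivariance of the $\buz,\buq$-varying action makes all $(n!)^2$ summands isomorphic $\uB_n/\langle\cdots\rangle$-modules, so by Krull--Schmidt each is free of rank one, in particular faithful.)

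\emph{Part (2).} By part (1) the scheme-theoretic fibre $\Upsilon_1^{-1}([\uz],[\uq])$ over the image of $(\uz,\uq)\in\Rdel^n\times\Rdel^n$ equals $\spec(\mathrm{im}\,\mathcal A(\uz;\uq))$, so $\Upsilon_1$ is unramified there exactly when $\mathrm{im}\,\mathcal A(\uz;\uq)$ is reduced, i.e. when $\mathcal A(\uz;\uq)$ acts semisimply on $(V^{\otimes n})_{\mathbbm{1}}$. This is the reality theorem for inhomogeneous Gaudin algebras --- the Gaudin counterpart of the Shapiro conjecture, that for pairwise distinct real parameters the Gaudin algebra is diagonalisable --- which I would cite from \cite{HKRW} (the homogeneous case going back to Mukhin--Tarasov--Varchenko). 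Together with $\dim\mathrm{im}\,\mathcal A(\uz;\uq)=n!=\dim(V^{\otimes n})_{\mathbbm{1}}$ from part (1), the image is then a maximal commutative semisimple subalgebra of $\End((V^{\otimes n})_{\mathbbm{1}})$, hence $\cong\CC^{n!}$ and reduced; equivalently $\mathcal A(\uz;\uq)$ has simple spectrum there.

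\emph{Part (3).} Again $\langle\sigma_1(\buz),\dots,\sigma_n(\buz),\sigma_i(\buq)-\sigma_i(\uq)\rangle=\mathfrak m'\cdot\overline{\mathsf B}_n$, now for $\mathfrak m'\subset A$ the maximal ideal at $([\underline 0],[\uq])$, so $\uB_n/\langle\cdots\rangle$ again has dimension $n!$. I would compare this with the limiting algebra $\mathcal B^0:=\lim_{\uz\to 0}\mathcal A(\uz;\uq)$ obtained by parallel transport along a collision path: by Proposition~\ref{prp:JMlimit} it is generated by $\Delta^n\mathcal A(\uq)$ and $\uJM_n$, which act respectively with simple spectrum on each $\gl_n$-isotypic component of $(V^{\otimes n})_{\mathbbm{1}}$ (as $\mathcal A(\uq)$ does on irreducibles, cf. Section~\ref{sec:GTdeg}) and with simple spectrum on the corresponding multiplicity spaces (cf. Section~\ref{sec:RSK-from-bethe}), so $\mathrm{im}\,\mathcal B^0$ also has dimension $n!$. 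There is a natural surjection $\uB_n/\langle\cdots\rangle\twoheadrightarrow\mathrm{im}\,\mathcal B^0$ --- the quotient by the coinvariant ideal $\langle\sigma_i(\buz)\rangle$ retains exactly the infinitesimal data of a simultaneous collision of the $n$ sites at $0$, which is what the collision-path limit records --- and comparing the two $n!$-dimensional sides it is an isomorphism. Since $\mathcal A(\underline 0;\uq)=\Delta^n\mathcal A(\uq)\subseteq\mathcal B^0$ by Proposition~\ref{prp:equal-params-gaudin-algebra}, this gives the asserted containment. The main obstacle is precisely this last step: matching the scheme-theoretic quotient of $\overline{\mathsf B}_n$ by the coinvariant ideal with the subalgebra obtained by parallel transport along a collision path, so that a genuine containment rather than merely a surjection onto $\mathrm{im}\,\mathcal A(\underline 0;\uq)$ is visible; for parts (1)--(2) the only non-formal inputs are the rank-one cyclicity of the weight space over the specialised Gaudin algebra and the reality theorem, with the rest being bookkeeping with Theorem~\ref{thm:MTVportmanteau} and standard Calogero--Moser geometry.
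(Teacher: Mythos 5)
Your parts (1) and (2) are sound in outline, and in substance they rest on the same external inputs the paper uses: the paper disposes of (1) by citing \cite[Lemma 5.4]{Mukhin:2014ga} and of (2) by citing \cite[Corollary 7.4]{Mukhin:2008kj}, while your flatness-plus-cyclicity argument for (1) and your reduction of (2) to semisimplicity of $\mathcal{A}(\uz;\uq)$ on $(V^{\ot n})_{\mathbbm{1}}$ for real parameters both hinge on exactly the Mukhin--Tarasov--Varchenko facts being cited (the cyclicity of the weight space over the Bethe algebra and the reality/simple spectrum theorem); the only quibble there is attribution of the reality statement.

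The genuine gap is in part (3), and it is the step you yourself flag: the asserted ``natural surjection'' $\uB_n/\langle \sigma_i(\buz), \sigma_i(\buq)-\sigma_i(\uq)\rangle \twoheadrightarrow \mathrm{im}\,\mathcal{B}^0$, where $\mathcal{B}^0$ is the collision-path limit generated by $\Delta^n\mathcal{A}(\uq)$ and $\uJM_n$. The fibre of the Bethe family over the point $([\underline{0}],[\uq])$ of the base carries no information about a direction of approach, whereas $\mathcal{B}^0$ does --- this is precisely why the compactified moduli space $\mathfrak{X}_{\uq}$ is needed in Sections 5--6, where different collision directions give genuinely different limit algebras. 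What continuity of global sections actually yields is the inclusion of the image of the fibre in $\End((V^{\ot n})_{\mathbbm{1}})$ inside $\lim_{t\to 0}\mathrm{im}\,\mathcal{A}(\uz(t);\uq)\subseteq \mathrm{im}\,\mathcal{B}^0$, and upgrading this to the equality you need would require showing the fibre acts faithfully (image of dimension $n!$) at this non-generic point, which you do not establish; without that equality your chain gives no containment of $\mathrm{im}\,\mathcal{A}(\underline{0};\uq)$ in the fibre. The paper sidesteps all of this: it observes that $\mathrm{im}\,\mathcal{A}(\underline{0};\uq)=\mathrm{im}\,\Delta^n\mathcal{A}(\uq)$ is generated by the classical Hamiltonians, i.e.\ by the evaluations at $\uz=\underline{0}$ of the dynamical Casimir Hamiltonians $\nabla_i(\uz,\uq)$ (via \cite[Corollary 3.4]{Mukhin:2010ky} and the duality of \cite[Theorem 3.1]{Mukhin:2009in}); since $\nabla_i(\uz,\uq)$ lies in $\mathcal{A}(\uz;\uq)$ for all regular $\uz$ by \cite[Proposition 9.5(3)]{HKRW} and depends polynomially on $\uz$, its value at $\uz=\underline{0}$ lies in $\uB_n/\langle \sigma_i(\buz), \sigma_i(\buq)-\sigma_i(\uq)\rangle$, giving the containment directly by exhibiting generators rather than by comparing the fibre with a degeneration. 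If you wish to salvage your route, you would have to prove faithfulness of the fibre at $\uz=\underline{0}$ or otherwise identify it with $\mathrm{im}\,\mathcal{B}^0$, which is essentially as hard as the statement itself.
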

\begin{proof}
Part (1) is a consequence of \cite[Lemma 5.4]{Mukhin:2014ga} and (2) is proved by Mukhin-Tarasov-Varchenko in \cite[Corollary 7.4]{Mukhin:2008kj}. For (3) the algebra $\mathcal{A}(0;\underline{q})$ acts via $\mathcal{A}(q)$ on $(V^{\ot n})_{\mathbbm{1}}$, by Proposition \ref{prp:GT-limit}. This action is semisimple, as explained in Subsection \ref{sec:GTdeg} and is generated by the 'classical' Hamiltonians, thanks to \cite[Corollary 3.4]{Mukhin:2010ky} and the duality of \cite[Theorem 3.1]{Mukhin:2009in}. These elements are the evaluation at $\underline{z} = \underline{0}$ of the dynamical Casimir Hamiltonians $$\nabla_i (\uz, \uq) = \sum_{k=1}^{n} z_kE_{ii}^{(k)} +  \sum_{j\neq i} \frac{\kappa_{ij}}{q_i-q_j} $$ where $\kappa_{ij} = 2(E_{ij}E_{ji} + E_{ji}E_{ij})\in U(\mathfrak{gl}_n).$ Since these elements belong to $\mathcal{A}(\underline{z}; \underline{q})= \uB_n/\langle \sigma_i(\buz) - \sigma_i(\underline{z}), \sigma_i(\buq) - \sigma_i(\underline{q}): 1\leq i \leq n\rangle$ for $\underline{z}\in \Creg^n$ by \cite[Proposition 9.5(3)]{HKRW}, it follows that their limit at $\underline{z} = \underline{0}$ belongs to $\uB_n/\langle \sigma_i(\buz), \sigma_i(\buq) - \sigma_i(\underline{q}): 1\leq i \leq n\rangle$, as required.  
\end{proof}
\subsection{} 
Now we move to the setup of \cite[Chapter 6,  and Appendix B]{BonRouq}. 
 Let $K,L$ be the fraction fields  of $P$ and $Z$ respectively, and let $R$ be the integral closure of $Z$ inside a Galois closure $F$ of the extension $L/K$. We denote by $\rho$ the projection
\[
\rho:	\spec R \longrightarrow \spec P.
\]
Let $\mf p_0$ be the ideal in $P$ generated by $\bc$ and let $\mf r_0$ an ideal in $R$ lying over $\mf p_0$. Fix an isomorphism $R_0=R/\mf r_0\rightarrow \CC[V\times V]$ that is the identity on $Z_0$. Let $(\underline{z},\underline{q})$ be a generic point in $\CC^n\times \CC^n$ and let $y_0$ be its preimage in $\spec R_0$. Let $\rho(y_0) = ([\uz], [\uq])$, the image of $(\underline{z}, \underline{q})$ in $\CC^n/\mathfrak{S}_n \times \CC^n/\mathfrak{S}_n$. We obtain a bijection
\[
\mathfrak{S}_n \longrightarrow \Upsilon_0^{-1}(([\uz],[\uq]))
\]
given by $w \mapsto (w(\underline{z}),\underline{q})\Delta \mathfrak{S}_n$.
Let $\gamma$ be a path in $\CC\times \CC^n/\mathfrak{S}_n\times \CC^n/\mathfrak{S}_n$ such that $\gamma(0)=(0,x_0)$ and such that $\rho$ is unramified over $\gamma(t)$ for $t\in [0,1)$. For any $w \in \mathfrak{S}_n$ there is a unique path $\gamma_w$ in $\spec Z$ lifting $\gamma$ and such that $\gamma_w(0)=(0,(w(\underline{z}),\underline{q})\Delta \mathfrak{S}_n)$.
\begin{definition} \cite[Definition 6.6.1]{BonRouq}
Two elements $w,w'$ are in the same Calogero-Moser $\gamma$-cell if $\gamma_w(1)=\gamma_{w'}(1)$.
\end{definition}
\subsection{} 
\label{sec:gamma} Assume that $\underline{z}\in \Rdel^n$ and $\underline{q}\in \Rdel^n$. Let $\tilde \gamma:[0,1]\rightarrow \CC \times \CC^n \times \CC^n$ be the path defined by $\tilde \gamma(t)=(t,(1-t)\uz,\uq)$. Let $\gamma$ be the projection of $\tilde \gamma$ on $\CC\times \CC^n/\mathfrak{S}_n\times \CC^n/\mathfrak{S}_n$. By Theorem \ref{thm:bethegaudin}(2) above and \cite[Lemma 6.3.4]{BonRouq}, $\gamma([0,1))$ lies in the unramified locus of $\spec P = \CC \times \CC^n/\mathfrak{S}_n \times \CC^n/\mathfrak{S}_n$. Therefore we can use this path to define Calogero-Moser $\gamma$-cells. 
There is a unique lift $\tilde \gamma$ of $\gamma$ in $\spec R$ such that $\tilde \gamma(0)=y_0$. Since $\gamma(1)$ is a general point in $\{1\}\times \{ \underline{0} \} \times \CC^n/\mathfrak{S}_n $, there is a unique prime ideal $\mf r$ of $R$ such that $y_1=\tilde\gamma(1)$ lives in the irreducible component of $\rho^{-1}(\{1\} \times \{ \underline{0} \} \times \CC^n/\mathfrak{S}_n)$ determined by $\mf r$. After perturbing $z$, we can assume $y_1$ lives in the maximal open subset $O$ of the irreducible component whose points have stabiliser the inertia group $I_{\mf r}$, the inertia group of $\mathfrak{r}$ in $\mathsf{Gal}(F/K)$. It then follows from \cite[Proposition 6.6.2 and comments after Lemma B.7.2]{BonRouq} that the Calogero-Moser $\gamma$-cells are the same as the right Calogero-Moser cells (with respect to $\mf r$) of $\mathfrak{S}_n$, as defined at the beginning of \cite[Part III]{BonRouq}. 
\subsection{} We are now able to give the main application of our results.
\begin{theorem}\label{thm:CMEqualKL}
The right Calogero-Moser cells for $\mathfrak{S}_n$ agree with the right Kazhdan--Lusztig cells for $\mathfrak{S}_n$, both being described by $w\sim^R w'$ if and only if $P(w) = P(w')$.
\end{theorem}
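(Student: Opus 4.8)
The plan is to realise Bonnaf\'e--Rouquier's path--lifting description of the right Calogero--Moser cells as a (degenerate) parallel transport of eigenspaces of inhomogeneous Gaudin algebras on the multilinear weight space $(V^{\otimes n})_{\mathbbm{1}}$, and then to read off the answer from Corollary~\ref{cor:main-theorem}. By the reduction carried out in Section~\ref{sec:gamma}, $w\sim^R w'$ if and only if $\gamma_w(1)=\gamma_{w'}(1)$, where $\gamma_w$ is the unique lift to $\spec Z$ of $t\mapsto (t,[(1-t)\uz],[\uq])$ with $\gamma_w(0)$ the point of $\Upsilon_0^{-1}([\uz],[\uq])$ labelled by $w$. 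First I would use the $\mathbb{C}^*$--action on $\CH$ scaling $\bc$ and the $x_i$ and fixing the $y_i$ to normalise the path, for $t\in(0,1)$, onto the slice $\bc=1$; there it becomes $s\mapsto (1,[s\uz],[\uq])$ with $s=(1-t)/t$ decreasing from $+\infty$ to $0$. On this slice Theorem~\ref{thm:MTVportmanteau} identifies $\spec Z_1$ with $\spec\uB_n$ over the base $\CC^n/\mathfrak{S}_n\times\CC^n/\mathfrak{S}_n$, and by Theorem~\ref{thm:bethegaudin}(1)--(2) the fibre over $([s\uz],[\uq])$ for $s>0$ is exactly the simple spectrum $\mathcal{E}_{s\uz,\uq}((V^{\otimes n})_{\mathbbm{1}})$ of $\mathcal{A}(s\uz;\uq)$. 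Thus $\gamma_w$ becomes a parallel transport of Gaudin eigenspaces along the path of parameters $s\uz$ as $s$ runs from $+\infty$ down to $0$.

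Next I would pin down the two endpoints and match them with the data of Theorem~\ref{thm:main}. As $s\to+\infty$ one has $\mathcal{A}(s\uz;\uq)\to\mathcal{A}(\uq)^{\otimes n}$ (Section~\ref{ztoinfty}), whose spectrum on $(V^{\otimes n})_{\mathbbm{1}}$ is the weight basis $\{\CC\,e_w:w\in\mathfrak{S}_n\}$; that this labelling agrees with the Bonnaf\'e--Rouquier labelling $w\mapsto (w(\uz),\uq)\Delta\mathfrak{S}_n$ of the $\bc=0$ fibre is the large--$\uz$ identification recalled in the introduction (the off--diagonal entries $(z_i-z_j)^{-1}$ of the matrix $Y$ tend to $0$, so the fibre of $\Upsilon$ degenerates onto the permutations of the eigenvalues). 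As $s\to 0$ the algebra degenerates to $\mathcal{A}(\underline{0};\uq)=\Delta^n\mathcal{A}(\uq)$ (Proposition~\ref{prp:equal-params-gaudin-algebra}), which acts semisimply but without simple spectrum, so the transport is of the degenerate type of Section~\ref{continuation}(2); restricted to $\Delta^n\mathcal{A}(\uq)$ it is precisely the map $p_{\uz=0}$ of Section~\ref{ztozero}, which does not depend on the chosen path in $\Rdel^n$. By Theorem~\ref{thm:bethegaudin}(3) together with \cite[Theorem~1.5]{Mukhin:2012ee} the fibre of $\Upsilon_1$ over $([\underline{0}],[\uq])$ is exactly the spectrum $\mathcal{E}_{\uq}((V^{\otimes n})_{\mathbbm{1}})$ of $\Delta^n\mathcal{A}(\uq)$, and the maps $\xi$, $\kappa$, $\comb_{\underline{0}}$ of Section~\ref{ztozero} identify it bijectively with the standard Young tableaux on $n$ boxes: the eigenspaces of $\Delta^n\mathcal{A}(\uq)$ on $(V^{\otimes n})_{\mathbbm{1}}$ lie in pairwise distinct $\gl_n$--isotypic blocks, within a block they are separated by $\mathcal{A}(\uq)$ on the highest--weight space, and $\comb^{n}_{\underline{0}}$ sends these to standard tableaux. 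Since $(V^{\otimes n})_{\mathbbm{1}}\subset\CC[\Mat_{n\times n}]$ (the case $r=n$) is spanned by the monomials $x^{P_w}$ with $P_w$ the permutation matrix of $w$, and $\alpha(\CC\,e_w)=P_w$, we obtain $\gamma_w(1)=p_{\uz=0}(\alpha^{-1}(P_w))$ and hence $\comb_{\underline{0}}(\kappa(\xi(\gamma_w(1))))=S(P_w)$.

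It follows that $\gamma_w(1)=\gamma_{w'}(1)$ if and only if $S(P_w)=S(P_{w'})$. By Corollary~\ref{cor:main-theorem} $S(P_w)=P(P_w)$, and evaluating $\RSK$ on the permutation matrix of $w$ returns, as the insertion tableau, the $P$--symbol $P(w)$ (the recording tableau returning $Q(w)$). Therefore $w\sim^R w'$ if and only if $P(w)=P(w')$, which is the classical description of the right Kazhdan--Lusztig cells of $\mathfrak{S}_n$, and the theorem follows.

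The step I expect to be the main obstacle is the endpoint bookkeeping of the second paragraph. On one side, one must match Bonnaf\'e--Rouquier's Galois--theoretic labelling of the $\bc=0$ fibre with the Gaudin labelling $w\leftrightarrow\CC\,e_w$ through the $\mathbb{C}^*$--normalisation and the $s\to+\infty$ limit. On the other side, one must check that the degenerate parallel transport at $\uz=0$, together with the inertia--group refinement used in Section~\ref{sec:gamma}, is recorded faithfully by $S(P_w)$ --- equivalently, that the coalescence of the $n!$ sheets of $\Upsilon_1$ over $([\underline{0}],[\uq])$ into the standard tableaux is governed precisely by $\Delta^n\mathcal{A}(\uq)$ followed by the Gelfand--Tsetlin degeneration $\uq\to 0$. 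Once these two identifications are secured, the core of the argument is formal, the substantive inputs being Theorem~\ref{thm:bethegaudin} and Corollary~\ref{cor:main-theorem}.
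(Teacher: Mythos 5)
Your proposal is correct and follows essentially the same route as the paper's proof: reduce via Section~\ref{sec:gamma} to tracking the lifts $\gamma_w$, identify the fibres over $\gamma((0,1))$ with the Gaudin spectra $\mathcal{E}_{s\uz,\uq}$ on $(V^{\ot n})_{\mathbbm{1}}$ via Theorem~\ref{thm:bethegaudin}, match the Bonnaf\'e--Rouquier labelling at $s\to\infty$ with the eigenbasis labelling through the diagonalisation of $Y$, use \cite[Theorem~1.5]{Mukhin:2012ee} to see that $\mathcal{A}(\underline{0};\uq)$ separates the points of the fibre at $\uz=\underline{0}$, and conclude by Corollary~\ref{cor:main-theorem} that the coalescence is governed by the $P$-symbol. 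The two points you flag as potential obstacles are exactly the ones the paper addresses, and in the same way.
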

\begin{proof} 
Thanks to the above discussion, we must determine precisely when $\gamma_w(1) = \gamma_{w'} (1)$. By Theorem \ref{thm:bethegaudin}(3) the fibres of $\Upsilon: \spec Z \rightarrow \spec P$ above the points in $\gamma((0,1))$ are precisely the elements in the $(1,\ldots , 1)$-weight space in the sets $\mathcal{E}_{s\underline{z}, \underline{q}}(\underline{1})$ where $s = (1-t)/t$.    
For any $s \in \CC^*$ and any $(s\uz,\uq)\in \Creg^n\times \Creg^n$ there are $n!$ choices of $(p_1, \ldots , p_n)\in \mathbb{C}^n$ such that matrices 
\begin{align}\label{pairmat}
       	Y& = \begin{pmatrix} p_1 & s^{-1}(z_1-z_2)^{-1} & \cdots & s^{-1}(z_1-z_n)^{-1} \\ s^{-1}(z_2-z_1)^{-1} & p_2 & \cdots & s^{-1}(z_2-z_n)^{-1} \\ \vdots & \vdots & & \vdots \\ s^{-1}(z_n-z_1)^{-1} & s^{-1}(z_n-z_2)^{-1} & \cdots & p_n  \end{pmatrix},
\end{align}
have eigenvalues $[\underline{q}]$. The set $\{(Z = \diag(sz_1, \ldots , sz_n), Y) \in \CM_n: Y \text{ as in } \eqref{pairmat}\} = \Upsilon_1^{-1}([s\uz], [\uq])$. As $t\rightarrow 0$, $s= (1-t)/t \rightarrow \infty$ and so the matrices tend to $Y = \diag ( p_1, \ldots , p_n)$ where $[\underline{p}] = [\uq]$. In the inhomogeneous Gaudin algebra description the matrix $\diag (q_{w^{-1}(1)}, \ldots q_{w^{-1}(n)})$ corresponds to the eigenbasis element $x_{w} = \prod_{i,j}x_1^{A_{11}}\cdots x_r^{A_{r1}}\otimes \cdots \otimes x_1^{A_{1n}}\cdots x_r^{A_{rn}} \in (V^{\ot n})_{\mathbbm{1}}$ from \ref{sec:ebasis}, where $A_{ij} = \delta_{j w(i)}$ for $1\leq i,j \leq n$. If we identify these matrices then with $\mathfrak{S}_n$ via $w \mapsto \diag (q_{w^{-1}(1)}, \ldots q_{w^{-1}(n)})$ we therefore recover the labelling in  \ref{sec:ebasis} of the $(1,\ldots , 1)$-weight space in $\mathcal{E}_{{\infty}, \underline{q}}(\underline{1})$ by permutations. This corresponds to $(0, [\uz, w^{-1}\uq]\Delta S_n) = (0, [w(\uz), \uq]) \in \{0\} \times (\CC^n\times \CC^n)/\Delta\mathfrak{S}_n$, and so is labelled by $w$ in the Galois-theoretic labelling. Hence the two labellings, one from inhomogeneous Gaudin algebras and the other from Galois theory, agree.  
The results of \ref{sec:GTdeg} and Theorem \ref{thm:main} describe the continuation along the path $\gamma$ of the elements $x_{w} \in \mathcal{E}_{\infty, \underline{q}}(\underline{1})$ for $w\in \mathfrak{S}_n$. In the limit at $t=1$ $x_w$ and $x_{w'}$ will  have the same eigenvalues for the action of $\mathcal{A}(\underline{0},\underline{q})$ if and only if $P(w) = P(w')$.  On the other hand for general $\uq\in \Creg^n$, by \cite[Theorem 1.5]{Mukhin:2012ee}, the subalgebra $\mathcal{A}(\underline{0};\underline{q})$ of $\uB_n/\langle \sigma_i(\buz), \sigma_i(\buq) - \sigma_i(\underline{q}): 1\leq i \leq n\rangle$ determines all the distinct closed points of the fibre, with $\sum_{\lambda\in \Part (n)} |\SYT (\lambda)|$ such points.  
Since $\uq\in \Rdel^n$ is a general condition, this shows that $w$ and $w'$ belong to the same Calogero-Moser $\gamma$-cell if and only if $P(w) = P(w')$.
\end{proof}
\subsection{} We can also describe the left CM-cells and the two-sided cells. 
\begin{corollary} 
\begin{enumerate}
    \item 
The left Calogero-Moser cells for $\mathfrak{S}_n$ agree with the right Kazhdan--Lusztig cells for $\mathfrak{S}_n$, both being described by $w\sim^L w'$ if and only if $Q(w) = Q(w')$. 
\item
The two-sided Calogero-Moser cells for $\mathfrak{S}_n$ agree with the two-sided Kazhdan--Lusztig cells for $\mathfrak{S}_n$, both being described by $w \sim^{LR} w'$ if and only the partition underlying $P(w)$ is the same as the partition underlying $P(w')$.
\end{enumerate}
\end{corollary}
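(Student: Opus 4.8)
The plan is to deduce part~(1) by running the proof of Theorem~\ref{thm:CMEqualKL} with the two copies of $\CC^n/\mathfrak{S}_n$ interchanged --- equivalently, with the parameters $\underline z$ and $\underline q$ swapped, equivalently after applying the automorphism of $\CH$ that exchanges the $x_i$'s and the $y_i$'s (up to a harmless sign on the $y_i$'s) --- and then to deduce part~(2) from part~(1) and Theorem~\ref{thm:CMEqualKL} by an elementary closure argument for the Robinson--Schensted correspondence.

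For part~(1): in place of the path $\tilde\gamma(t)=(t,(1-t)\underline z,\underline q)$ of Subsection~\ref{sec:gamma} I would use $\tilde\gamma'(t)=(t,\underline z,(1-t)\underline q)$ for $\underline z,\underline q\in\Rdel^n$, with image $\gamma'$ in $\CC\times\CC^n/\mathfrak{S}_n\times\CC^n/\mathfrak{S}_n$. Since Theorem~\ref{thm:bethegaudin}(2) is symmetric in $\underline z$ and $\underline q$, the arc $\gamma'([0,1))$ again lies in the unramified locus of $\spec P$, so it defines Calogero--Moser $\gamma'$-cells; running the discussion of Subsection~\ref{sec:gamma} through the $x\leftrightarrow y$ symmetry of $\CH$ --- which interchanges the left and right pre-orders of \cite{BonRouq} --- identifies these with the \emph{left} Calogero--Moser cells of $\mathfrak{S}_n$. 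Next, the transpose of Theorem~\ref{thm:bethegaudin}(3) identifies the fibre of $\Upsilon_1$ over $\gamma'(t)$ with the $(1,\dots,1)$-weight part of $\mathcal{E}_{\underline z,s\underline q}(\underline{1})$, $s=(1-t)/t$, whose limiting algebra at $t=1$ is $\mathcal{A}(\underline z;\underline{0})$. By the $(\gl_n,\gl_n)$-duality of Section~\ref{duality}, this limiting algebra is precisely the one that governs the map $T$ of~\eqref{eq:Tmap} rather than $S$, so by Corollary~\ref{cor:main-theorem} the element $x_w$ attached to $w\in\mathfrak{S}_n$ in the proof of Theorem~\ref{thm:CMEqualKL} tracks, as $t\to1$, to the eigenvalue labelled by $Q(w)$. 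Feeding this, together with the count of closed points in the limiting fibre (the transpose of \cite[Theorem~1.5]{Mukhin:2012ee}), into the argument of Theorem~\ref{thm:CMEqualKL} verbatim yields $w\sim^L w'$ if and only if $Q(w)=Q(w')$; since the corresponding family of Kazhdan--Lusztig cells of $\mathfrak{S}_n$ is described in the same way, part~(1) follows.

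For part~(2): the two-sided Calogero--Moser pre-order is by definition the join of the left and right Calogero--Moser pre-orders, so $w\sim^{LR}w'$ precisely when $w$ and $w'$ are linked by a chain whose consecutive terms lie in a common left or a common right Calogero--Moser cell. By part~(1) and Theorem~\ref{thm:CMEqualKL} this is the transitive closure on $\mathfrak{S}_n$ of the two relations $P(w)=P(w')$ and $Q(w)=Q(w')$. Bijectivity of $\RSK$ (Theorem~\ref{thm:RSK}) shows at once that this closure is ``$P(w)$ and $P(w')$ have the same underlying partition'': if $w$ and $w'$ have the same shape then $\RSK^{-1}(P(w),Q(w'))$ lies in the same right cell as $w$ and in the same left cell as $w'$. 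As the two-sided Kazhdan--Lusztig cells of $\mathfrak{S}_n$ admit exactly this description, part~(2) follows.

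The step I expect to be the main obstacle is the Galois-theoretic bookkeeping underlying part~(1): one must check carefully that degenerating the $\underline q$-parameter instead of the $\underline z$-parameter really produces the \emph{left} --- rather than again the right --- Calogero--Moser cells in the sense of \cite{BonRouq}, i.e.\ that the constructions of Subsection~\ref{sec:gamma} (the choice of the prime $\mathfrak{r}_0$, the inertia group $I_{\mathfrak{r}}$, and the comparison with \cite[Part~III]{BonRouq}) are genuinely symmetric under the $x\leftrightarrow y$ involution of $\CH$. Everything else is a transparent mirror of the proof of Theorem~\ref{thm:CMEqualKL}, apart from the elementary fact about $\RSK$ invoked in part~(2).
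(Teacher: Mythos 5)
Your part~(1) follows the paper's own route: the paper likewise takes the path $\tilde\lambda(t)=(t,\underline z,(1-t)\underline q)$ and repeats the argument of Theorem~\ref{thm:CMEqualKL} with the roles of $\uz$ and $\uq$ exchanged. The labelling issue you flag as the main obstacle is exactly what the paper addresses in its alternative argument, using the automorphism $\sigma_H$ of \cite[Proposition 5.6.1]{BonRouq} induced by $(c,\uq,\uz)\mapsto(c,-\uq,\uz)$: this turns the $\lambda$-lifts into $\gamma$-lifts indexed by $w^{-1}w_0$, and then $P(\tau^{-1})=Q(\tau)$ together with the evacuation identity $Q(w_0w)=(\textsf{evac}\,Q(w))^t$ gives the criterion $Q(w)=Q(w')$. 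Your version absorbs this relabelling into the identity $T=Q$ of Corollary~\ref{cor:main-theorem} and the duality of Section~\ref{duality}, which is an acceptable way of filling in the paper's ``repeat the argument'' for the swapped path.

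Part~(2), however, contains a genuine gap. You claim the two-sided Calogero--Moser pre-order is ``by definition the join of the left and right'' pre-orders, so that $\sim^{LR}$ is the transitive closure of the relations $P(w)=P(w')$ and $Q(w)=Q(w')$. That is not how two-sided Calogero--Moser cells are defined: they are defined Galois-theoretically (in the present setting via a path such as $\mu(t)=(t,(1-t)\uz,(1-t)\uq)$ degenerating both parameters simultaneously), and what one knows a priori is only that each two-sided cell is a union of left cells and a union of right cells. This yields one inclusion: your chain argument with $\RSK^{-1}(P(w),Q(w'))$ (the same device as in the paper) shows that two permutations of the same RSK shape lie in a common two-sided cell. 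But it does not exclude the possibility that a single two-sided Calogero--Moser cell contains several shape classes, i.e.\ that $\sim^{LR}$ is strictly coarser than the closure of $\sim^{L}$ and $\sim^{R}$. The paper closes this with a counting argument: by \cite[Theorem 14.4.1]{BonRouq} the two-sided cells have cardinalities equal to the squares of the dimensions of the irreducible representations of $\mathfrak S_n$, so a two-sided cell containing the shape class of $\lambda$, which already has $|\SYT(\lambda)|^2$ elements, can contain nothing else. Your proposal needs this cardinality bound (or some substitute argument bounding the two-sided cells from above) to conclude.
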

\begin{proof}
  (1) For the left cells we use the path $\tilde{\lambda}: [0,1] \rightarrow \mathbb{C} \times \CC^n \times \CC^n$ given by $\tilde{\lambda}(t) = (t, \underline{z}, (1-t)\underline{q})$, with $\uz$ and $\uq$ as before. We can then repeat the argument of Theorem \ref{thm:CMEqualKL}.
  
  Alternatively let the projection of $\tilde{\lambda}$ to $\mathbb{C}\times \CC^n/\mathfrak{S}_n \times \CC^n/\mathfrak{S}_n$ be denoted by $\lambda$. As explained in \cite[Proposition 5.6.1]{BonRouq} there is an automorphism $\sigma_H$ of $R$ which restricts to automorphisms of $P$ and $Q$, which are induced from the mapping $$(c, \uq, \uz) \mapsto (c, -\uq, \uz).$$ Note that $-\uq$ is no longer an element of $\Rdel^n$, rather $w_0 (\-uq)$ is, where $w_0 = (1 \,n) (2 \, n-1) \cdots \in \mathfrak{S}_n$ is the longest word. The mapping $\sigma_H \circ \lambda$ is just the mapping $\gamma$ introduced in \ref{sec:gamma}. 
  
  We need to understand when the lifts of $\lambda$ to $\spec P$ beginning at the points $(0, (w(\uz), \uq)\Delta \mathfrak{S}_n)$ and $(0, (w'(\uz), \uq)\Delta \mathfrak{S}_n)$ collide at the endpoint. This happens if and only if it happens on applying $\sigma_H$. In other words, this happens if and only if the lifts of $\gamma$ to $\spec P$ beginning at the points $\sigma_H ((0, (w(\uz), \uq)\Delta \mathfrak{S}_n))$ and $\sigma_H(0, (w'(\uz), \uq)\Delta \mathfrak{S}_n)$ collide at the endpoint. But 
  \begin{eqnarray*}
  \sigma_H ((0, (w(\uz), \uq)\Delta \mathfrak{S}_n)) &=& (0, (-\uq, w(\uz))\Delta \mathfrak{S}_n)\\ &=&(0, (w^{-1} (-\uq), \uz)\Delta \mathfrak{S}_n) \\&=& (0, (w^{-1} w_0 (\uq), \uz)\Delta \mathfrak{S}_n) \end{eqnarray*} where, up to homotopy, we can assume that $w_0 (\uq) = - \uq$. Thus $\lambda_{w}(1)  = \lambda_{w'} (1) $ if and only if $\gamma_{w^{-1}w_0}(1) = \gamma_{w'^{-1}w_0}(1)$. By Theorem \ref{thm:CMEqualKL} this happens if and only if $P(w^{-1}w_0) = P(w'^{-1}w_0)$. Equivalently $P((w_0w)^{-1}) = P((w_0w')^{-1})$. Since $P(\tau^{-1}) = Q(\tau)$ this is the same as $Q(w_0w ) = Q(w_0w')$. By \cite[A2.1.11]{Stanley:1999eh} $Q(w_0w) = (\textsf{evac}Q(w))^t$ where $\textsf{evac}$ is the evacuation procedure on tableaux, a bijection. It follows that $Q(w_0w ) = Q(w_0w')$ if and only if $Q(w) = Q(w')$, as claimed. 
  
  (2) Choosing the path $\mu (t) = (t, (1-t)\uz, (1-t)\uq)$ ensures that the two-sided cells are unions of left cells and also right cells. Thus if $w$ and $w'$ give rise to the same partition, then we can find $y\in\mathfrak{S}_n$ such that $P(w) = P(y)$ and $Q(y) = Q(w')$. It follows that $w$ and $w'$ are in the same two-sided call. On the other hand, thanks to \cite[Theorem 14.4.1]{BonRouq}, the two-sided cells have cardinality the squares of the dimensions of the irreducible representations of $\mathfrak{S}_n$. Hence they cannot have any more elements in them, confirming the final claim.  
\end{proof}
\printbibliography

\end{document}